\numberwithin{equation}{section}
\newcommand{\B}{\mathcal{B}}
\renewcommand{\H}{\mathcal{H}}
\theoremstyle{plain}
\newtheorem{theorem}{Theorem}[section]
\newtheorem{corollary}[theorem]{Corollary}
\newtheorem{lemma}[theorem]{Lemma}
\newtheorem{definition}[theorem]{Definition}
\newtheorem{remark}[theorem]{Remark}
\newtheorem{example}[theorem]{Example}
\begin{document}
	
	\title[]
	{Block quantum dynamical semigroups of completely positive definite kernels}

	\author[Dey]{Santanu Dey \textsuperscript{*}}
	\address{Department of Mathematics, Indian Institute of Technology Bombay, Mumbai-400076, India}
	\email{santanudey@iitb.ac.in}
	\author[Saini]{Dimple Saini}
	\address{Center for Mathematical and Financial Computing, Department of Mathematics, The LNM Institute of Information Technology, Rupa ki Nangal, Post-Sumel, Via-Jamdoli
		Jaipur-302031,
		(Rajasthan) INDIA}
	\email{18pmt006@lnmiit.ac.in,  dimple92.saini@gmail.com}
	\author[Trivedi]{Harsh Trivedi}
	\address{Center for Mathematical and Financial Computing, Department of Mathematics, The LNM Institute of Information Technology, Rupa ki Nangal, Post-Sumel, Via-Jamdoli
		Jaipur-302031,
		(Rajasthan) INDIA}
	\email{harsh.trivedi@lnmiit.ac.in, trivediharsh26@gmail.com}
	\thanks{*corresponding author}


	\begin{abstract}
		Kolmogorov decomposition for a given completely positive definite kernel is a generalization of Paschke's GNS construction for the completely positive map. Using Kolmogorov decomposition, to every quantum dynamical semigroup (QDS) for completely positive definite kernels over a set $S$ on given $C^*$-algebra $\mathcal{A},$ we shall assign an inclusion system $F = (F_s)_{s\ge 0}$ of Hilbert bimodules over $\mathcal{A}$ with a generating unit $\xi^{\sigma}=(\xi^{\sigma}_s)_{s\ge 0}.$ Consider a von Neumann algebra $\mathcal{B}$, and let $\mathfrak{T}=(\mathfrak{T}_s)_{s\ge 0}$
		be a QDS over a set $S$ on the algebra $M_2(\mathcal{B})$ with $\mathfrak{T}_s=\begin{pmatrix}
			\mathfrak{K}_{s,1} & \mathfrak{L}_s\\\mathfrak{L}_s^*& \mathfrak{K}_{s,2}
		\end{pmatrix}$ which acts block-wise. Further, suppose that $(F^i_s )_{s\ge 0}$ is the inclusion system affiliated to the diagonal QDS $(\mathfrak{K}_{s,i})_{s\ge 0}$ along with the generating unit $(\xi^{\sigma}_{s,i} )_{s\ge 0},$ $\sigma\in S,i\in \{1,2\}$, then we prove that there exists a unique contractive (weak) morphism $V = (V_s)_{s\ge 0}:F^2_s \to F^1_s$ 
		such that $\mathfrak{L}_s^{\sigma,\sigma'}(b)=\langle \xi_{s,1}^{\sigma},V_s b\xi_{s,2}^{\sigma'}\rangle$ for every $\sigma',\sigma\in S$ and $b\in \mathcal{B}.$ We also study the semigroup version of a factorization theorem for $\mathfrak{K}$-families.
	\end{abstract}

	\subjclass[2020]{46L08, 46L57, 81S22.}
	\keywords{Hilbert $C^*$-modules, Positive semigroups in $C^*$-algebras, Quantum dynamical semigroups}

	\maketitle
	
	\section{Introduction}
	In Matrix Analysis, it is clear that a given block matrix $\begin{pmatrix}
		X & Y\\ Y^*& Z
	\end{pmatrix}$ defined on $\mathcal{H}_1\oplus \mathcal{H}_2$ is a positive block matrix if and only if $X$ and $Z$ are positive operators as well as there is an operator $D:\mathcal{H}_2\to \mathcal{H}_1$ which is a contraction with $Y=X^{1/2}DZ^{1/2}.$

A fundamental construction due to Gelfand-Naimark-Segal (GNS), see \cite{Pau02}, says that any given positive linear functional $w$ on a unital $C^*$-algebra $\mathcal{B}$ is characterized by a Hilbert space $\mathcal{H},$ a generating (cyclic) unit vector $\Omega$ in $\mathcal{H},$ and a representation $\pi:\mathcal{B}\to B(\mathcal{H})$ such that $w(b)=\langle \Omega,\pi(b)\Omega\rangle$ for all $b\in \mathcal{B}.$ Let $\mathcal{A}$ be a  $C^*$-algebra, then a linear mapping $\phi:\mathcal{A}\to\mathcal{B}$ is called {\it completely positive} if $\sum_{i,j=1}^n b^*_j\phi(a_j^*a_i)b_i\geq 0$ for every $n\in\mathbb N,$ where $b_j,b_i\in\mathcal{B}$ and $a_j,a_i\in\mathcal{A}.$ Completely positive maps, are generalization of positive linear functionals, and their decompositions have applications to Quantum Statistical Mechanics, Operator Algebras,  Quantum Information Theory, and other related research areas. Stinespring \cite{Stin55} gave a characterization of all $B(\mathcal{H})$-valued completely positive maps and this result was further extended for $\mathcal{B}$-valued completely positive maps by Paschke \cite{Pas73} where $\mathcal{B}$ is a von Neumann algebra the proof is similar to that of the GNS construction, but instead of a Hilbert space here we get a Hilbert $\mathcal{A}$-$\mathcal{B}$-bimodule. Considering the block completely positive maps (by taking $\mathcal{B}=B(\mathcal{H})$) Paulsen \cite{Pau84} proved that every bounded operator on a Hilbert space is completely polynomially bounded if and only if it is similar to a contraction and hence gave a partial answer to a question mentioned by Halmos \cite{H55} which inquires if each polynomially bounded operator on a Hilbert space is similar to a contraction. 

Let $\phi:\mathcal{A}\to B(\mathcal{H})$ be a completely positive map and $(\mathcal{K},\pi,V)$ be its minimal Stinespring representation where $V:\mathcal{H}\to\mathcal{K}$ is an isometry and $\pi:\mathcal{A}\to B(\mathcal{K})$ is a representation, and let $\begin{pmatrix}
		\phi & \psi\\ \psi^*& \phi
	\end{pmatrix}$ from $M_2(\mathcal{A})$ into $M_2(B(\mathcal{H}))$ be a block completely positive map, then Paulsen and Suen in \cite{PC85} proved that
 there is a contractive map $T\in \pi(\mathcal{A})'$ with $V^*\pi(b)TV=\psi(b)$ for $b\in \mathcal{A}.$ This result may be considered as an analogue of the above mentioned result on positivity of block matrices $\begin{pmatrix}
 	X & Y\\ Y^*& Z
 \end{pmatrix}$. Furuta \cite{F94} explored the partial matrices completion problem of block completely positive functions. 
	 Bhat and Kumar \cite{BK20} extended the Paulsen and Suen result based on the Paschke's GNS construction in the following way:
	\begin{theorem}$($Bhat-Kumar$)$\label{DS4}
		Let $\mathcal{A}$ be a von Neumann algebra on a Hilbert space $\mathcal{H}$ and $\mathcal{B}$ be a $C^*$-algebra and let $\phi_i:\mathcal{B}\to \mathcal{A}$ be a completely positive function with GNS representation $(E_i,x_i),$ where $i\in \{1,2\}.$ If $\begin{pmatrix}
			\phi_1 & \psi\\ \psi^*& \phi_2
		\end{pmatrix}$ from $M_2(\mathcal{B})$ into $M_2(\mathcal{A})$ is a block completely positive map for some completely bounded (or CB) map $\psi:\mathcal{B}\to \mathcal{A},$ then there exists a bilinear adjointable contractive map $V:E_2\to E_1$ with $\psi(b)=\langle x_1,Vbx_2\rangle$ for every $b\in \mathcal{B}.$ 
	\end{theorem}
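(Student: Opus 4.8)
The plan is to realize $V$ as the operator induced by an $\A$-valued sesquilinear form on $E_1\times E_2$ built directly from $\psi$, in exact analogy with the scalar factorization $Y=X^{1/2}DZ^{1/2}$ recalled in the introduction: here $E_1,E_2$ play the roles of the ranges of $X^{1/2},Z^{1/2}$, and $V$ plays the role of the contraction $D$. Writing elements of the GNS modules as finite sums $\xi=\sum_i b_i x_1 a_i\in E_1$ and $\eta=\sum_j c_j x_2 d_j\in E_2$ (such elements are total since $x_1,x_2$ are generating), I would define
\[
[\xi,\eta]:=\sum_{i,j} a_i^*\,\psi(b_i^* c_j)\,d_j\in\A ,
\]
which is forced by the requirements that $V$ be a bimodule map and that $\psi(b)=\langle x_1,Vbx_2\rangle$: formally, for a bimodule map one has $\langle\xi,V\eta\rangle=\sum_{i,j}a_i^*\langle x_1,V b_i^* c_j x_2\rangle d_j=\sum_{i,j}a_i^*\psi(b_i^*c_j)d_j$. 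The proof then reduces to showing that this form is well defined, bounded, and represented by a genuine adjointable contraction.

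The key step is an $\A$-valued Cauchy--Schwarz inequality coming from block complete positivity. Let $\Phi=\begin{pmatrix}\phi_1&\psi\\\psi^*&\phi_2\end{pmatrix}$. Encoding the module data into $M_2(\B)$ and $M_2(\A)$ by $\gamma_1=\begin{pmatrix}b&0\\0&0\end{pmatrix}$, $\gamma_2=\begin{pmatrix}0&c\\0&0\end{pmatrix}$, $\alpha_1=\begin{pmatrix}a&0\\0&0\end{pmatrix}$, $\alpha_2=\begin{pmatrix}0&0\\0&d\end{pmatrix}$, a direct computation shows that $\Phi(\gamma_i^*\gamma_j)$ places $\phi_1(b^*b),\phi_2(c^*c),\psi(b^*c)$ in the $(1,1),(2,2),(1,2)$ slots respectively, so that complete positivity of $\Phi$, namely $\sum_{i,j}\alpha_i^*\Phi(\gamma_i^*\gamma_j)\alpha_j\ge 0$, reads exactly as
\[
\begin{pmatrix}\langle\xi,\xi\rangle & [\xi,\eta]\\[2pt] [\xi,\eta]^* & \langle\eta,\eta\rangle\end{pmatrix}\ge 0 \quad\text{in } M_2(\A).
\]
(The general case of sums is handled by enlarging the families $\gamma,\alpha$ and invoking complete, rather than merely $2$-, positivity.) Positivity of this $2\times2$ matrix yields $\|[\xi,\eta]\|^2\le\|\langle\xi,\xi\rangle\|\,\|\langle\eta,\eta\rangle\|$, which simultaneously shows that $[\cdot,\cdot]$ annihilates null vectors, hence descends to a well-defined bounded form on $E_1\times E_2$, and that $\xi\mapsto[\xi,\eta]$ is bounded for each fixed $\eta$.

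The remaining, and I expect most delicate, step is representability of the form, and this is precisely where the hypothesis that $\A$ is a von Neumann algebra is essential. For fixed $\eta$, the assignment $\xi\mapsto[\xi,\eta]$ is a bounded $\A$-linear functional on $E_1$; since $\A$ is a von Neumann algebra the Paschke GNS module $E_1$ may be taken self-dual, so this functional is represented by a unique $V\eta\in E_1$ with $[\xi,\eta]=\langle\xi,V\eta\rangle$. This defines $V:E_2\to E_1$, and self-duality over $\A$ also guarantees that the resulting bounded module map is automatically adjointable (alternatively one constructs $V^*$ from the conjugate form built from $\psi^*$). Right $\A$-linearity is immediate from the formula, while left $\B$-linearity follows from the identity $[\,b^*\xi,\eta\,]=[\,\xi,b\eta\,]$, which gives $V(b\eta)=bV\eta$; thus $V$ is a bimodule (bilinear) map. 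Taking $\xi=x_1$ and $\eta=bx_2$ gives $\langle x_1,Vbx_2\rangle=[x_1,bx_2]=\psi(b)$, as required.

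Finally, contractivity drops out of the same positivity: substituting $\xi=V\eta$ into the displayed $2\times2$ inequality turns it into $\begin{pmatrix}\langle V\eta,V\eta\rangle & \langle V\eta,V\eta\rangle\\ \langle V\eta,V\eta\rangle & \langle\eta,\eta\rangle\end{pmatrix}\ge0$, whence $\langle V\eta,V\eta\rangle\le\langle\eta,\eta\rangle$, i.e. $\|V\|\le1$. Uniqueness follows because any bimodule map $V$ satisfying $\psi(b)=\langle x_1,Vbx_2\rangle$ has all inner products $\langle b'x_1a',V(cx_2 d)\rangle=a'^*\psi(b'^*c)d$ determined by $\psi$ on the total sets generated by $x_1$ and $x_2$. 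The one point requiring genuine care is the passage to the self-dual completion: one must verify that $x_1,x_2$ remain generating and that the identities $\phi_i(b)=\langle x_i,bx_i\rangle$ persist there, so that both the $2\times2$ positivity and the representation of $[\cdot,\eta]$ are valid in the module where adjointability is available.
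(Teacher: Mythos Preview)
Your argument is correct and follows a genuinely different route from the one the paper takes. The paper does not prove Theorem~\ref{DS4} on its own (it is quoted from \cite{BK20}), but the proof of its generalization, Theorem~\ref{main-single}, specializes to yield it, and that proof is ``top-down'': one builds the GNS module $F$ of the entire block map $\Phi$, splits $F=\hat F_1\oplus\hat F_2$ via the diagonal matrix units $\mathbb{F}_{11},\mathbb{F}_{22}$, produces a bilinear unitary $U:\hat F_2^{(\B)}\to\hat F_1^{(\B)}$ from the off-diagonal shift $u\mapsto\mathbb{F}_{12}u$, isometrically embeds the minimal modules $E_i$ into these corners via maps $T_i$, and finally sets $V=T_1^*UT_2$. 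Your approach is ``bottom-up'': you never form $F$, but define the $\A$-valued pairing $[\xi,\eta]$ directly from $\psi$, read off the $2\times2$ positivity by a clever choice of test elements in $M_2(\B)$ and $M_2(\A)$, and then invoke self-duality of von Neumann modules to represent the pairing by a contraction. Your argument is shorter and mirrors the scalar factorization $Y=X^{1/2}DZ^{1/2}$ more transparently; the paper's construction, by contrast, exhibits $V$ geometrically inside the block GNS module, and it is precisely this explicit description via matrix units that is reused verbatim in the semigroup version of Section~3, where one needs the contractions $V_t$ at different times to be built compatibly from a single inclusion system.
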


 Heo \cite{He99} introduced completely multi-positive linear maps. In \cite{Ske11} Skeide mentioned that the notion of completely positive definite kernel is a generalization of completely multi-positive linear map.	This article is about the structure of a semigroup of block completely positive definite kernels. 
	\begin{definition}
		Let $\mathcal{B}$ and $\mathcal{A}$ be two $C^*$-algebras. A function $\mathfrak{K}:S\times S \to B(\mathcal{A},\mathcal{B})$ is said to be {\rm completely positive definite kernel,} or briefly {\rm CPD-kernel} over a set $S$ if
		\[
		\sum_{j,i} b^*_j \mathfrak{K}^{\sigma_j, \sigma_i} (a^*_j a_i) b_i \geq 0~\mbox{for finite $\sigma_i\in S$, $b_i\in \mathcal{B}$, $a_i\in \mathcal{A}$. }~
		\]
	\end{definition} Suppose that $\mathcal{B}$ is a $C^*$-algebra. A (right) $\mathcal{B}$-module vector space $E$ is said to be {\it Hilbert $\mathcal{B}$-module} if it is a $\mathcal{B}$-valued inner product, which is complete with respect to the associated norm (see \cite{L95, Pas73, Sk00}). If the closed linear span of $\{\langle z,w \rangle : z,w\in E\}$ equals $\mathcal{B},$ then we say that $E$ is {\it full}. The following Kolmogorov decomposition from \cite{BBLS04} is a generalization of Paschke's GNS construction and it resembles the construction of reproducing kernel Hilbert space:

\begin{theorem}[Barreto-Bhat-Liebscher-Skeide] Let $\mathfrak{K}:S\times S\to B(\mathcal{A},\mathcal{B})$ be a CPD-kernel over a given set $S,$ then there is a {Hilbert $\mathcal{A}$-$\mathcal{B}$-module} $F$ and a function $\mathfrak{i}:S\to F$ such that $\mathfrak{K}^{\sigma,\sigma'}(a)=\langle \mathfrak{i}(\sigma), a\mathfrak{i}(\sigma')\rangle$ for all $a\in \mathcal{A}$ and $\sigma',\sigma\in S$ with the minimal condition is  $F=\overline{span}(\mathcal{A}\mathfrak{i}(\sigma)\mathcal{B}).$ 
	\end{theorem}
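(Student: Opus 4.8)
The plan is to reproduce, at the level of kernels, the Paschke/GNS construction that underlies the Barreto--Bhat--Liebscher--Skeide decomposition. First I would form the algebraic $\A$-$\B$-bimodule $\M_0$ spanned by elementary tensors $a\otimes\sigma\otimes b$ with $a\in\A$, $\sigma\in S$, $b\in\B$, letting $\A$ act on the left by $a'\cdot(a\otimes\sigma\otimes b)=(a'a)\otimes\sigma\otimes b$ and $\B$ on the right by $(a\otimes\sigma\otimes b)\cdot b'=a\otimes\sigma\otimes(bb')$. On $\M_0$ I would define the $\B$-valued sesquilinear form
\[
\Big\langle \sum_i a_i\otimes\sigma_i\otimes b_i,\ \sum_j a_j'\otimes\sigma_j'\otimes b_j'\Big\rangle
=\sum_{i,j} b_i^*\,\mathfrak{K}^{\sigma_i,\sigma_j'}(a_i^*a_j')\,b_j'.
\]
The defining inequality of a CPD-kernel is \emph{precisely} the statement that this form is positive, i.e.\ $\langle x,x\rangle\ge 0$ for every $x\in\M_0$; compatibility with the two actions is immediate from the formula.

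Next I would let $N=\{x\in\M_0:\langle x,x\rangle=0\}$. The Cauchy--Schwarz inequality for semi-inner-product modules shows that $N$ is a right $\B$-submodule on which the form vanishes identically, so $\M_0/N$ carries a genuine (definite) $\B$-valued inner product; completing it with respect to the associated norm yields a Hilbert $\B$-module $F$.

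The step I expect to be the crux is promoting the left action of $\A$ to bounded adjointable operators on $F$. For a single $a\in\A$ and $x=\sum_i a_i\otimes\sigma_i\otimes b_i$, write $d=(\|a\|^2 1-a^*a)^{1/2}$ (passing to the unitisation of $\A$ first if it is non-unital). Since $d=d^*$ and $a_i^*(\|a\|^2 1-a^*a)a_j=(da_i)^*(da_j)$, one gets
\[
\|a\|^2\langle x,x\rangle-\langle a\cdot x,\,a\cdot x\rangle
=\sum_{i,j} b_i^*\,\mathfrak{K}^{\sigma_i,\sigma_j}\big((da_i)^*(da_j)\big)\,b_j\ \ge\ 0,
\]
again by the CPD inequality, now applied to the vectors $da_i$. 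Hence $\langle a\cdot x,\,a\cdot x\rangle\le\|a\|^2\langle x,x\rangle$, which simultaneously shows $a\cdot N\subseteq N$, that the left action descends to $\M_0/N$, and that it extends to a bounded operator $\rho(a)$ on $F$ with $\|\rho(a)\|\le\|a\|$. The identity $\langle a\cdot x, y\rangle=\langle x, a^*\cdot y\rangle$ is read off directly from the form, so each $\rho(a)$ is adjointable with $\rho(a)^*=\rho(a^*)$, and $\rho:\A\to\SB^a(F)$ is a $*$-homomorphism; this is exactly what makes $F$ a Hilbert $\A$-$\B$-module.

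Finally I would set $\mathfrak{i}(\sigma)$ to be the class of $1\otimes\sigma\otimes 1$ (in the non-unital case, the limit along an approximate unit, equivalently working in the unitisation and restricting). Then $\langle\mathfrak{i}(\sigma),\,a\,\mathfrak{i}(\sigma')\rangle=\mathfrak{K}^{\sigma,\sigma'}(a)$ is a one-line substitution, and since the vectors $a\,\mathfrak{i}(\sigma)\,b$ are exactly the classes of the generators $a\otimes\sigma\otimes b$, they are dense in $F$, giving the minimality $F=\overline{\operatorname{span}}(\A\,\mathfrak{i}(\sigma)\,\B)$. Uniqueness of the minimal decomposition up to a bimodule unitary is then routine: any other minimal pair $(F',\mathfrak{i}')$ has matching inner products on generators, so $a\,\mathfrak{i}(\sigma)\,b\mapsto a\,\mathfrak{i}'(\sigma)\,b$ extends by density to an isometric $\A$-$\B$-bimodule isomorphism. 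The only genuinely delicate point in the whole argument is the boundedness estimate above; the rest is the standard separation--completion machinery for semi-inner-product modules.
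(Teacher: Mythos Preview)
The paper does not give its own proof of this theorem; it is quoted from \cite{BBLS04} as background, so there is no in-paper argument to compare against. Your proposal is the standard Kolmogorov/Paschke--GNS construction carried out at the level of kernels, and it is correct: the CPD inequality is exactly what makes the sesquilinear form on $\M_0$ positive, your $d=(\|a\|^2 1-a^*a)^{1/2}$ estimate is the right way to get boundedness of the left $\A$-action, and the remaining quotient--completion and uniqueness steps are routine. This is precisely the construction in \cite{BBLS04}, so your write-up matches the source the paper is citing.
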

	We say that $(F,\mathfrak{i})$ is the {\it Kolmogorov-representation} of $\mathfrak{K}$ over $S.$ Kolmogorov decomposition is an active area of research particularly in the study of  type $I$ product systems of Hilbert modules, Noncommutative reproducing kernel Hilbert spaces as well as correspondences, transfer function realization for the noncommutative Schur-Agler-Class, and the study of non commutative Szego kernel, for more details see \cite{BFH08,BMV16,BBLS04,MS08,S20}.
	
	A continuous time version of the Stinespring’s theorem for completely positive functions was derived by Parthasarathy in \cite{KRP90}. A continuous time version of Paschke's GNS construction was obtained in \cite{BS2000} and of Kolmogorov construction for CPD kernels was obtained in \cite{BBLS04}. Bhat and Kumar proved continuous time version of Theorem \ref{DS4} by considering semigroup of block CP maps. In Section 2, we study a result about the structure of block completely positive definite kernels based on Theorem \ref{DS4} and in Section 3, we study continuous time version and relevant results. In Section 4, we present a lifting theorem based on \cite{BK20}.
	
	Let us recall the following notion of $\phi$-maps:
 \begin{definition} Consider Hilbert $C^*$-modules ${E}$ and ${F}$ over $C^*$-algebras $\mathcal{B}$ and $\mathcal{C},$ respectively. Let $\phi:\mathcal{B}\to \mathcal{C}$ be a linear function. A map $T:E\to F$ is said to be {\rm $\phi$-map} if $\phi(\langle y,x\rangle)=\langle T(y),T(x)\rangle$ for each $y,x\in E.$ \end{definition}
Bhat, Ramesh and Sumesh in \cite{BRS12} extended the Stinespring theorem for $\phi$-maps. In \cite{Sk12}, using Paschke's GNS construction, Skeide proved a factorization theorem for $\phi$-maps which is a generalization of the Bhat, Ramesh and Sumesh.
	We recall the definition of $\mathfrak{K}$-family from  \cite{DT17} which is a generalization of $\phi$-maps:
	\begin{definition}Let ${E}$ and ${F}$ be Hilbert $C^*$-modules over $\mathcal{B}$ and $\mathcal{C},$ respectively. Suppose $S$ is a set and $ \mathfrak{K}:S\times S\to B(\mathcal{B},\mathcal{C})$ is a kernel. For each $\sigma\in S,$ consider a map $\mathcal{K}^{\sigma}:E\to F,$ we say that the family $\{\mathcal{K}^{\sigma}\}_{\sigma\in S}$ is a {\rm $\mathfrak{K}$-family} if
	$\mathfrak{K}^{\sigma,\sigma'}(\langle y, x\rangle)=\langle \mathcal{K}^{\sigma} (y),\mathcal{K}^{{\sigma}'}(x)\rangle~\mbox{for all}~ \sigma',\sigma\in S$ and $y,x\in E.$
	\end{definition}
	Dey and Trivedi \cite{DT17} generalized Skeide's factorization theorem for $\mathfrak{K}$-families as follows:
	\begin{theorem}\label{1234}
		Suppose that $\mathcal{B}$ and $\mathcal{C}$ are two $C^*$-algebras where $\mathcal{B}$ is unital. Let $S$ be a set, and ${F}$ and ${E}$ be Hilbert $C^*$-modules over $\mathcal{C}$ and $\mathcal{B},$ respectively. Suppose that $\mathcal{K}^{\sigma}:E\to F$ is a map for every $\sigma\in S.$ Then the following conditions are equivalent:
		\begin{enumerate}
			\item  There is a CPD-kernel $ \mathfrak{K}:S\times S\to B(\mathcal{B},\mathcal{C})$ with $\{\mathcal{K}^{\sigma}\}_{\sigma\in S}$ is a $\mathfrak{K}$-family.
			\item There exist an isometry $v:E \odot \mathcal{F}\to F$ where $\mathcal{F}$ is a $\mathcal{B}$-$\mathcal{C}$-correspondence, and a function $\mathfrak i:S\to \mathcal{F}$  with $$\mathcal{K}^{\sigma}(y)=v(y\odot \mathfrak i (\sigma)) \quad \quad \mbox{for} \quad  \sigma\in S,y\in E.$$
		\end{enumerate}
	\end{theorem}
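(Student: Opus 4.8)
\textit{Proof proposal.} The plan is to prove the two implications separately; the substantive content lies in the implication (1) $\Rightarrow$ (2), which amounts to applying the Kolmogorov decomposition to $\mathfrak{K}$ and then assembling a tensor‑product isometry, with the $\mathfrak{K}$-family identity serving as the bridge between the two descriptions.

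For the easy direction (2) $\Rightarrow$ (1), I would start from the given isometry $v:E\odot\mathcal{F}\to F$ and the function $\mathfrak{i}:S\to\mathcal{F}$, and simply \emph{define} $\mathfrak{K}^{\sigma,\sigma'}(b):=\langle \mathfrak{i}(\sigma),\,b\,\mathfrak{i}(\sigma')\rangle$ for $b\in\mathcal{B}$. Since $\mathcal{F}$ is a $\mathcal{B}$-$\mathcal{C}$-correspondence, each $\mathfrak{K}^{\sigma,\sigma'}$ is a bounded map $\mathcal{B}\to\mathcal{C}$. Setting $\eta=\sum_i b_i\,\mathfrak{i}(\sigma_i)\,c_i$ and using $\langle \zeta, b\,\mu\rangle=\langle b^*\zeta,\mu\rangle$ together with $\mathcal{C}$-linearity of the inner product, one rewrites $\sum_{j,i} c_j^*\,\mathfrak{K}^{\sigma_j,\sigma_i}(b_j^*b_i)\,c_i=\langle\eta,\eta\rangle\ge 0$, so $\mathfrak{K}$ is a CPD-kernel. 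The $\mathfrak{K}$-family relation is then immediate from the isometry of $v$, since
\[
\langle \mathcal{K}^{\sigma}(y),\mathcal{K}^{\sigma'}(x)\rangle=\langle v(y\odot\mathfrak{i}(\sigma)),v(x\odot\mathfrak{i}(\sigma'))\rangle=\langle y\odot\mathfrak{i}(\sigma),x\odot\mathfrak{i}(\sigma')\rangle=\langle\mathfrak{i}(\sigma),\langle y,x\rangle\,\mathfrak{i}(\sigma')\rangle=\mathfrak{K}^{\sigma,\sigma'}(\langle y,x\rangle).
\]

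For the direction (1) $\Rightarrow$ (2), I would apply the Kolmogorov decomposition of Barreto–Bhat–Liebscher–Skeide to the CPD-kernel $\mathfrak{K}:S\times S\to B(\mathcal{B},\mathcal{C})$ to obtain a Hilbert $\mathcal{B}$-$\mathcal{C}$-module $\mathcal{F}$ (that is, a $\mathcal{B}$-$\mathcal{C}$-correspondence) together with $\mathfrak{i}:S\to\mathcal{F}$ satisfying $\mathfrak{K}^{\sigma,\sigma'}(b)=\langle\mathfrak{i}(\sigma),b\,\mathfrak{i}(\sigma')\rangle$ and the minimality $\mathcal{F}=\overline{\mathrm{span}}\{b\,\mathfrak{i}(\sigma)\,c:b\in\mathcal{B},\,\sigma\in S,\,c\in\mathcal{C}\}$. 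Here unitality of $\mathcal{B}$ guarantees $\mathfrak{i}(\sigma)=1_{\mathcal{B}}\mathfrak{i}(\sigma)\in\mathcal{F}$, so the elementary tensors $y\odot\mathfrak{i}(\sigma)$ are meaningful. I then define $v$ on the generating set by $v(y\odot\mathfrak{i}(\sigma)):=\mathcal{K}^{\sigma}(y)$ and extend it additively and $\mathcal{C}$-linearly.

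The crux is to verify that this $v$ is well defined and isometric, and both follow from one inner-product computation on finite sums that uses the $\mathfrak{K}$-family identity. For $\xi=\sum_k (y_k\odot\mathfrak{i}(\sigma_k))\,c_k$ one computes $\langle\xi,\xi\rangle=\sum_{k,l}c_k^*\,\mathfrak{K}^{\sigma_k,\sigma_l}(\langle y_k,y_l\rangle)\,c_l$, and replacing $\mathfrak{K}^{\sigma_k,\sigma_l}(\langle y_k,y_l\rangle)=\langle \mathcal{K}^{\sigma_k}(y_k),\mathcal{K}^{\sigma_l}(y_l)\rangle$ gives $\langle\xi,\xi\rangle=\langle v\xi,v\xi\rangle$. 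Hence $\|v\xi\|=\|\xi\|$, which simultaneously forces well-definedness (any two representatives of the same vector have images at distance zero) and shows $v$ is isometric on a dense subset, so it extends uniquely to an isometry $E\odot\mathcal{F}\to F$ with $\mathcal{K}^{\sigma}(y)=v(y\odot\mathfrak{i}(\sigma))$ by construction. I expect the only point requiring genuine care, rather than an estimate, to be the bookkeeping of the bimodule balancing and the right $\mathcal{C}$-action: one must check that the closed $\mathcal{C}$-linear span of $\{y\odot\mathfrak{i}(\sigma)\}$ exhausts $E\odot\mathcal{F}$, which follows from minimality of the Kolmogorov module together with the identity $y\odot b\,\mathfrak{i}(\sigma)\,c=(yb\odot\mathfrak{i}(\sigma))\,c$ in $E\odot\mathcal{F}$.
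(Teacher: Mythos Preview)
Your argument is correct and is exactly the natural route: Kolmogorov decomposition for $(1)\Rightarrow(2)$, and the tautological computation $\langle v(y\odot\mathfrak i(\sigma)),v(x\odot\mathfrak i(\sigma'))\rangle=\langle \mathfrak i(\sigma),\langle y,x\rangle\,\mathfrak i(\sigma')\rangle$ for $(2)\Rightarrow(1)$. Note, however, that the present paper does not supply its own proof of this theorem; it is quoted in the introduction as a result from \cite{DT17} (Dey--Trivedi) and used as a black box in Section~5. So there is no ``paper's proof'' to compare against here---your write-up is essentially the standard proof one finds in \cite{DT17}, which in turn specializes Skeide's factorization argument for $\phi$-maps to the CPD setting.
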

Indeed, based on the work of Skeide and Sumesh \cite{SSu14}, it was proved in \cite{DT17} that a $\mathfrak{K}$-family has several alternate characterization as mentioned in next theorem.
\begin{theorem}\label{DS5}
	Suppose that ${E}$ and ${F}$ are Hilbert $C^*$-modules over $C^*$-algebras $\mathcal{B}$ and $\mathcal{C},$ respectively, where $E$ is full. Suppose that $\mathcal{K}^{\sigma}:E\to F$ is a linear map for every $\sigma\in S$ and set $F_{\mathcal{K}}:=\overline{span}\{\mathcal{K}^\sigma(y)c:~\sigma\in S,~c\in \mathcal{C},y\in E\}$. Then the following conditions are equivalent:
	\begin{enumerate}
		\item [(a)] There is a unique CPD-kernel $\mathfrak{K}:S\times S\to B(\mathcal{B},\mathcal{C})$  so that $\{\mathcal{K}^{\sigma}\}_{\sigma\in S}$ is a $\mathfrak{K}$-family.
		\item [(b)] $\{\mathcal{K}^{\sigma}\}_{\sigma\in S}$ extends to a block wise CPD-kernel $\begin{pmatrix}
			{\mathfrak{K}^{\sigma,\sigma'}} &  {\mathcal{K}^{\sigma^*}} \\
			\mathcal{K}^{\sigma'} & \vartheta
		\end{pmatrix}:\begin{pmatrix}
			\mathcal{B} &  E^* \\
			E & B^a(E)
		\end{pmatrix}\to \begin{pmatrix}
			\mathcal{C} &  F_{\mathcal{K}}^* \\
			F_{\mathcal{K}} & B^a(F_{\mathcal{K}})
		\end{pmatrix},$ where $\vartheta$ is a $*$-homomorphism.
		\item [(c)] For $s_1,\ldots,s_n\in S$ the function from $E_n$ to $F_n$ given by
		$${\bf y}\mapsto({\mathcal{K}^{s_1}}(y_1),{\mathcal{K}^{s_2}}(y_2),\ldots,{\mathcal{K}^{s_n}}(y_n))~\mbox{for every}~{\bf y}=(y_1,y_2,\ldots,y_n)\in E_n$$
		is a completely bounded map and $F_{\mathcal{K}}$ can be made into a
		$B^a ({E})$-$\mathcal{C}$-correspondence so that $\mathcal{K}^{\sigma}$ is left $B^a({E})$-linear function.
		
		\item [(d)] For $s_1,\ldots,s_n\in S$ the function from $E_n$ to $F_n$ given by
		$${\bf y}\mapsto({\mathcal{K}^{s_1}}(y_1),{\mathcal{K}^{s_2}}(y_2),\ldots,{\mathcal{K}^{s_n}}(y_n))~\mbox{for all}~{\bf y}=(y_1,y_2\ldots,y_n)\in E_n$$
		is completely bounded  and $\{\mathcal{K}^{\sigma}\}_{\sigma\in S}$
		is such that
		$$\langle \mathcal{K}^\sigma{( y)},\mathcal{K}^{\sigma'}({ w}\langle { w}',{ y}'\rangle)\rangle=\langle \mathcal{K}^{\sigma}({ w}'\langle {w},{ y}\rangle),\mathcal{K}^{\sigma'}({y}')\rangle~\mbox{for all ${ w}',{ y}',w,y\in{E},\sigma',\sigma\in S$.}$$
		
	\end{enumerate}
	
\end{theorem}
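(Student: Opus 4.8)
The plan is to establish all four equivalences through the chain $(a)\Leftrightarrow(b)$ together with $(a)\Rightarrow(c)\Rightarrow(d)\Rightarrow(a)$. The organizing device is the linking-algebra picture: the matrix $C^*$-algebras $\begin{pmatrix}\mathcal{B}&E^*\\E&B^a(E)\end{pmatrix}$ and $\begin{pmatrix}\mathcal{C}&F_{\mathcal{K}}^*\\F_{\mathcal{K}}&B^a(F_{\mathcal{K}})\end{pmatrix}$ are exactly the linking algebras of $E$ and $F_{\mathcal{K}}$, and any CPD-kernel between them is forced to act block-wise with the displayed corners. For $(a)\Rightarrow(b)$ I would start from the factorization data of Theorem~\ref{1234}, taken minimal: a $\mathcal{B}$-$\mathcal{C}$-correspondence $\mathcal{F}$, a map $\mathfrak{i}\colon S\to\mathcal{F}$, and an isometry $v\colon E\odot\mathcal{F}\to F$ with $\mathcal{K}^\sigma(y)=v(y\odot\mathfrak{i}(\sigma))$. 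Minimality and fullness of $E$ identify $F_{\mathcal{K}}$ with the range $v(E\odot\mathcal{F})$, and I would set $\vartheta(a):=v(a\odot\mathrm{id}_{\mathcal{F}})v^*$; since $v^*v=\mathrm{id}$ this is a $*$-homomorphism $B^a(E)\to B^a(F_{\mathcal{K}})$. Verifying that the resulting block kernel is CPD then reduces to expanding the defining positivity sum and recognizing it as $\la\Xi,\Xi\ra\ge0$ for a single $\Xi\in E\odot\mathcal{F}$; the family relation $\mathfrak{K}^{\sigma,\sigma'}(\la y,x\ra)=\la\mathcal{K}^\sigma(y),\mathcal{K}^{\sigma'}(x)\ra$ and the intertwining of $\vartheta$ with $v$ are precisely what make the cross terms assemble into this square. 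Conversely $(b)\Rightarrow(a)$ is immediate: the $(1,1)$-corner is $\mathfrak{K}$, testing block positive-definiteness on off-diagonal entries recovers the family relation, and fullness of $E$ (density of $\overline{\mathrm{span}}\,\la E,E\ra$ in $\mathcal{B}$) forces uniqueness of $\mathfrak{K}$.

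For $(a)\Rightarrow(c)$ I would again invoke Theorem~\ref{1234} to obtain $\mathcal{F}$, $\mathfrak{i}$, and the isometry $v$. Transporting the canonical left action of $B^a(E)$ on $E\odot\mathcal{F}$ through $v$ turns $F_{\mathcal{K}}=v(E\odot\mathcal{F})$ into a $B^a(E)$-$\mathcal{C}$-correspondence. Left $B^a(E)$-linearity of $\mathcal{K}^\sigma$ is then a one-line computation, $\mathcal{K}^\sigma(ay)=v\big((a\odot\mathrm{id}_{\mathcal{F}})(y\odot\mathfrak{i}(\sigma))\big)=a\cdot\mathcal{K}^\sigma(y)$, and complete boundedness of the tuple map $\mathbf{y}\mapsto(\mathcal{K}^{s_1}(y_1),\ldots,\mathcal{K}^{s_n}(y_n))$ follows because each component factors through the isometry $v$, so the relevant amplifications are uniformly bounded.

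The implication $(c)\Rightarrow(d)$ is a direct manipulation with rank-one operators. For $w,w'\in E$ let $\theta_{w,w'}\in B^a(E)$ be given by $\theta_{w,w'}y'=w\la w',y'\ra$, so that $\theta_{w,w'}^*=\theta_{w',w}$. Using left $B^a(E)$-linearity of the $\mathcal{K}^\sigma$ and self-adjointness of the transported action, I would compute
\begin{align*}
\la\mathcal{K}^\sigma(y),\mathcal{K}^{\sigma'}(w\la w',y'\ra)\ra
&=\la\mathcal{K}^\sigma(y),\theta_{w,w'}\cdot\mathcal{K}^{\sigma'}(y')\ra\\
&=\la\theta_{w',w}\cdot\mathcal{K}^\sigma(y),\mathcal{K}^{\sigma'}(y')\ra\\
&=\la\mathcal{K}^\sigma(w'\la w,y\ra),\mathcal{K}^{\sigma'}(y')\ra,
\end{align*}
which is exactly the asserted identity.

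The main obstacle is $(d)\Rightarrow(a)$, where a kernel must be produced from the bare balancing relation. Rather than guessing $\mathfrak{K}$ and fighting with well-definedness on $\la E,E\ra$ directly, I would recover the factorization data of condition~(2) of Theorem~\ref{1234}. Using the complete boundedness hypothesis together with the balancing identity of $(d)$, I would construct a $\mathcal{B}$-$\mathcal{C}$-correspondence $\mathcal{F}$ as a Kolmogorov/GNS-type quotient of a space of formal symbols attached to $S$ and $E$, with the candidate $\mathcal{C}$-valued semi-inner product prescribed through the expressions $\la\mathcal{K}^\sigma(y),\mathcal{K}^{\sigma'}(x)\ra$. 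Here the balancing relation is precisely what is needed to check that this semi-inner product is well-defined over the $\mathcal{B}$-balancing and positive semidefinite, while complete boundedness controls its norm. After completing and quotienting, one obtains $\mathfrak{i}\colon S\to\mathcal{F}$ and an isometry $v\colon E\odot\mathcal{F}\to F$ with $\mathcal{K}^\sigma(y)=v(y\odot\mathfrak{i}(\sigma))$, i.e.\ condition~(2) of Theorem~\ref{1234}; that theorem then delivers the CPD-kernel $\mathfrak{K}$ of $(a)$, and fullness of $E$ pins down its uniqueness. I expect the genuinely delicate point to be proving that the prescribed semi-inner product is well-defined and positive, as this is exactly where the associativity relation of $(d)$ and the complete boundedness estimate have to be combined.
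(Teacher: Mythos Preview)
The paper does not contain a proof of this theorem. Theorem~\ref{DS5} is stated in the introduction as a result quoted from \cite{DT17} (see the sentence immediately preceding it: ``Indeed, based on the work of Skeide and Sumesh \cite{SSu14}, it was proved in \cite{DT17} that a $\mathfrak{K}$-family has several alternate characterization as mentioned in next theorem''). No argument for it is given anywhere in the paper; it is used only as an input, most notably in the proof of Theorem~\ref{main result}, whose final line reads ``The rest of the proof $(d)\Leftrightarrow (c)$ and $(c)\Rightarrow (a)$ proceeds in the same as in \cite[Theorem 3.2]{DT17}.'' So there is nothing in this paper against which to compare your proposal.

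That said, your outline follows the expected architecture: the linking-algebra viewpoint and the factorization of Theorem~\ref{1234} are indeed the natural organizing tools, and your treatments of $(a)\Leftrightarrow(b)$, $(a)\Rightarrow(c)$, and $(c)\Rightarrow(d)$ are the standard computations. Your discussion of $(d)\Rightarrow(a)$ correctly identifies the genuine content --- constructing the correspondence $\mathcal{F}$ from the balancing relation and the CB bound --- but remains a sketch; the actual well-definedness and positivity arguments (which in \cite{DT17} rely on the machinery of \cite{SSu14}) are where the work lies, and you have described the shape of the argument without carrying it out.
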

 In Section 5, we prove the semigroup version of the factorization theorem for $\mathfrak{K}$-families.

	\subsection{Preliminaries and Notations}
	 Throughout this paper, we will use the following notations: $B(\mathcal{H},\mathcal{K})$ for the algebra of all bounded linear operators between Hilbert spaces $\mathcal{H}$ and $\mathcal{K},$ $B^a({E,F})$ for the algebra of all adjointable operators from Hilbert $\mathcal{B}$-module $E$ to Hilbert $\mathcal{B}$-module $F$ and $B^a({E})$ for $B^a({E,E}).$

	Suppose that ${\mathcal{B}}$ is a von Neumann algebra (or simply say, vNa) on $\mathcal{H}.$ A Hilbert ${\mathcal{B}}$-module $F$ is called {\it von Neumann ${\mathcal{B}}$-module} if it is strongly closed in $B({\mathcal{H},F\odot \mathcal{H}}).$ Moreover, suppose that ${\mathcal{A}}$ is a vNa, a von Neumann $\mathcal{B}$-module $F$ is called {\it von Neumann $\mathcal{A}$-$\mathcal{B}$-module} if $F$ is a Hilbert $\mathcal{A}$-$\mathcal{B}$-module with the normal Stinespring representation $\rho: \mathcal{A} \to B(F\odot \mathcal{H}).$ If $F$ is a von Neumann $\B$-module, then $B^a(F)$ is a von Neumann subalgebra of $B(F\odot \H).$ von Neumann modules are self-dual and hence any bounded right linear map between von Neumann modules is adjointable (see \cite{Sk05,BK20}). 

	Suppose that $\mathfrak{K}:S\times S\to B(\mathcal{A},\mathcal{B})$ is a CPD-kernel over a set $S,$ where $\mathcal{A}$ and $\mathcal{B}$ are unital $C^*$-algebras. Let $(E,\mathfrak{i})$ be the minimal Kolmogorov-representation for $\mathfrak{K}$. Suppose that $\mathcal{B}$ is a concrete $C^*$-algebra of operators acting nondegenerately on $\mathcal{H}$. Let ${K}=E\odot \mathcal{H}$ and denote by $\rho:a \mapsto a \odot I_{\mathcal{H}}$ be the Stinespring representation of $\mathcal{A}$ on ${K}.$ For $\sigma \in S,$ defined a bounded map $L_{\mathfrak{i}(\sigma)}:\mathcal{H}\to K$ by $L_{\mathfrak{i}(\sigma)}(h)=\mathfrak{i}(\sigma) \odot h$ for $h\in \mathcal{H}$ with $L_{\mathfrak{i}(\sigma)}^*:\mathfrak{i}(\sigma')\odot h \mapsto \langle \mathfrak{i}(\sigma),\mathfrak{i}(\sigma')\rangle h.$ Define a function $\xi:E\to B(\mathcal{H},K)$ by $\xi(\mathfrak{i}(\sigma))=L_{\mathfrak{i}(\sigma)}$, then $L_{\mathfrak{i}(\sigma)}^*L_{\mathfrak{i}(\sigma')}=\langle \mathfrak{i}(\sigma),\mathfrak{i}(\sigma')\rangle$ in $B(\mathcal{H})$ and $L_{a\mathfrak{i}(\sigma)b}=\rho(a)L_{\mathfrak{i}(\sigma)} b.$ We can recognize $E$ as a concrete subset of $B(\mathcal{H},K).$ For $\sigma',\sigma\in S$ we have $$\mathfrak{K}^{\sigma,\sigma'}(a)=\langle \mathfrak{i}(\sigma),a\mathfrak{i}(\sigma')\rangle=L_{\mathfrak{i}(\sigma)}^*L_{a\mathfrak{i}(\sigma')}=L_{\mathfrak{i}(\sigma)}^*\rho(a)L_{\mathfrak{i}(\sigma')}\quad \mbox{for all} \quad a\in \mathcal{A}.$$ This implies that $\mathfrak{K}^{\sigma,\sigma'}$ is unital if and only if $L_{\mathfrak{i}(\sigma)}$ is an isometry in $B(\mathcal{H},K).$ Note that $\overline{span}\{\rho(a)L_{\mathfrak{i}(\sigma)} h: a\in \mathcal{A},h\in\mathcal{H},\sigma\in S\}=\overline{span}\{a{\mathfrak{i}(\sigma)}\odot h: a\in \mathcal{A},h\in\mathcal{H},\sigma\in S\}=E\odot \mathcal{H}=K.$ We say that representation $(K,\rho,L_{\mathfrak{i}(\sigma)})$ is the (minimal) Kolmogorov Stinespring representation for $\mathfrak{K}$ over $S.$ On the other hand, suppose that $(K, \rho, L_{\mathfrak{i}(\sigma)})$ is the (minimal) Kolmogorov Stinespring representation for $\mathfrak{K}$ over $S.$ Consider $B(\mathcal{H},K)$ as
	a Hilbert $\mathcal{A}$-$B(\mathcal{H})$-module with the left action of $\mathcal{A}$ is defined by $\rho$. Suppose $E=\overline{span}\mathcal{A} L_{\mathfrak{i}(\sigma)}B(\mathcal{H}) \subseteq B(\mathcal{H},K),$ then $(E,L_{\mathfrak{i}(\sigma)})$ is the (minimal) Kolmogorov-representation for $\mathfrak{K}$ over $S.$  For $k\in \mathcal{K},g\in \mathcal{H},$ a bounded function $|k \rangle\langle g|: \mathcal{K}\to \mathcal{H}$ given by $|k \rangle\langle g|(h)=\langle k,h\rangle g$ for every $h\in \mathcal{K}.$ Two Hilbert $C^*$-modules $F$ and $E$ are said to be {\it isomorphic} if there exists a bilinear unitary between them, and denoted by $F \simeq E.$

	\section{Block completely positive definite kernels}\label{sec1}
	
	In this section, we will discuss the structure of block completely positive definite kernels. Suppose that $\mathcal{B}$ is a unital $C^*$-algebra with $p\in\mathcal{B}$ is a projection and $q=\mathbf{1}-p,$ then the block decomposition for $b\in \mathcal{B}$ as follows:
	\begin{equation}
		b = \begin{pmatrix}
			p b p & pb q \\ q b p & q b q
		\end{pmatrix}\in \begin{pmatrix}
			p\mathcal{B} p&  p\mathcal{B} q \\ q \mathcal{B} p & q \mathcal{B} q
		\end{pmatrix}.
	\end{equation}
	
	\begin{definition}
		Suppose that $\mathcal{A}$ and $\mathcal{B}$ are two unital $C^*$-algebras and $p_1\in\mathcal{A}$ and $p_2\in\mathcal{B}$ are two projections. A function $ \mathfrak{K}:S\times S\to  B(\mathcal{A},\mathcal{B})$ is called \emph{block kernel} over a set $S$ if $\mathfrak{K}$ respects the above block decomposition over $S,$ that is, for every $a\in\mathcal{A}$ and $\sigma',\sigma\in S,$ we have
		\begin{equation}
			\mathfrak{K}^{\sigma,\sigma'}(a) = \begin{pmatrix}
				\mathfrak{K}^{\sigma,\sigma'}(p_1 a p_1) & \mathfrak{K}^{\sigma,\sigma'}(p_1a q_1) \\ \mathfrak{K}^{\sigma,\sigma'}(q_1 a p_1) & \mathfrak{K}^{\sigma,\sigma'}(q_1 a q_1)
			\end{pmatrix}\in \begin{pmatrix}
				p_2\mathcal{B} p_2&  p_2\mathcal{B} q_2  \\q_2  \mathcal{B} p_2 &q_2  \mathcal{B} q_2
			\end{pmatrix}.
		\end{equation}
	\end{definition}
	
	Suppose that $\mathfrak{K}:S\times S\to  B(\mathcal{A},\mathcal{B})$ is a block kernel. Then there exist
	$\mathfrak{K}^{\sigma,\sigma'}_{11}:p_1\mathcal{A} p_1\to p_2\mathcal{B} p_2,$ $\mathfrak{K}^{\sigma,\sigma'}_{12}:p_1 \mathcal{A} q_1 \to p_2\mathcal{B} q_2,\mathfrak{K}^{\sigma,\sigma'}_{21}: q_1 \mathcal{A} p_1\to q_2 \mathcal{B} p_2,$ and $\mathfrak{K}^{\sigma,\sigma'}_{22}:q_1 \mathcal{A} q_1\to q_2 \mathcal{B} q_2$ such that
	\[\mathfrak{K}^{\sigma,\sigma'}=\begin{pmatrix}
		\mathfrak{K}^{\sigma,\sigma'}_{11} &\mathfrak{K}^{\sigma,\sigma'}_{12} \\\mathfrak{K}^{\sigma,\sigma'}_{21} &\mathfrak{K}^{\sigma,\sigma'}_{22}
	\end{pmatrix} \quad \quad for \quad \sigma',\sigma\in S .\]

	\begin{lemma}\label{lem-single}
		Suppose that $\mathcal{A}$ and $\mathcal{B}$ are two unital $C^*$-algebras and we denote $I_2:=\{1,2\}.$ Let $ \mathfrak{K}_i:S\times S\to  B(\mathcal{A},\mathcal{B})$ be a CPD-kernel over $S$ with Kolmogorov-representation $(F_i,\mathfrak{j}_i)$ for $i\in I_2,$ and let $V: F_2\to F_1$ be a bilinear adjointable contraction and a kernel $\mathfrak{L}:S\times S\to B(\mathcal{A},\mathcal{B})$ is defined by $\mathfrak{L}^{\sigma,\sigma'}(a)=\langle \mathfrak{j}_1(\sigma), Va\mathfrak{j}_2(\sigma')\rangle.$ Then the block kernel $\mathfrak{K}=\begin{pmatrix}
			\mathfrak{K}_1 & \mathfrak{L}\\\mathfrak{L}^*& \mathfrak{K}_2
		\end{pmatrix}: S\times S\to  B(M_2(\mathcal{A}), M_2(\mathcal{B}))$ is CPD over $S$.
	\end{lemma}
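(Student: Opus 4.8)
The plan is to realize all four block entries of $\mathfrak{K}$ in a common Kolmogorov--Stinespring form and then reduce the required positivity to the positivity of a single $2\times 2$ operator matrix built from $V$. First I would fix a faithful nondegenerate representation of $\mathcal{B}$ on a Hilbert space $\mathcal{H}$, so that positivity in $M_2(\mathcal{B})$ is equivalent to positivity as an operator on $\mathcal{H}\oplus\mathcal{H}$. For $i\in I_2$ let $(K_i,\rho_i,L_{\mathfrak{j}_i(\sigma)})$ be the minimal Kolmogorov--Stinespring representation attached to $(F_i,\mathfrak{j}_i)$ as recalled in the preliminaries, where $K_i=F_i\odot\mathcal{H}$, $\rho_i(a)=a\odot I_{\mathcal{H}}$, and $\mathfrak{K}_i^{\sigma,\sigma'}(a)=L_{\mathfrak{j}_i(\sigma)}^*\rho_i(a)L_{\mathfrak{j}_i(\sigma')}$.

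The key technical object is the induced map $V\odot I_{\mathcal{H}}:K_2\to K_1$. Since $V$ is a bilinear adjointable contraction, $V\odot I_{\mathcal{H}}$ is a contraction with $(V\odot I_{\mathcal{H}})^*=V^*\odot I_{\mathcal{H}}$, and it intertwines the Stinespring representations, $(V\odot I_{\mathcal{H}})\rho_2(a)=\rho_1(a)(V\odot I_{\mathcal{H}})$, because $V$ is left $\mathcal{A}$-linear. Using $L_{Vy}=(V\odot I_{\mathcal{H}})L_y$ for $y\in F_2$ together with $L_{a\mathfrak{j}_2(\sigma')}=\rho_2(a)L_{\mathfrak{j}_2(\sigma')}$, I would rewrite the off-diagonal kernels as $\mathfrak{L}^{\sigma,\sigma'}(a)=L_{\mathfrak{j}_1(\sigma)}^*\rho_1(a)(V\odot I_{\mathcal{H}})L_{\mathfrak{j}_2(\sigma')}$ and, after checking that $(\mathfrak{L}^*)^{\sigma,\sigma'}(a)=\langle \mathfrak{j}_2(\sigma),V^*a\mathfrak{j}_1(\sigma')\rangle$, as $(\mathfrak{L}^*)^{\sigma,\sigma'}(a)=L_{\mathfrak{j}_2(\sigma)}^*\rho_2(a)(V^*\odot I_{\mathcal{H}})L_{\mathfrak{j}_1(\sigma')}$. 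At this stage all four entries share a uniform expression.

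Next I would take finite families $\sigma_i\in S$, $A_i=(a_i^{kl})\in M_2(\mathcal{A})$, $B_i\in M_2(\mathcal{B})$ and evaluate $M:=\sum_{i,j}B_j^*\mathfrak{K}^{\sigma_j,\sigma_i}(A_j^*A_i)B_i$ against an arbitrary $\zeta\in\mathcal{H}\oplus\mathcal{H}$. Writing $B_i\zeta=(\eta_i^1,\eta_i^2)$ and expanding $\langle\zeta,M\zeta\rangle$ entrywise, I substitute the four Stinespring formulas and use that each $\rho_i$ is a homomorphism to split $\rho_i((a_j^{mk})^*a_i^{ml})=\rho_i(a_j^{mk})^*\rho_i(a_i^{ml})$. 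Collecting terms into the vectors $U^{(m)}:=\sum_i\rho_1(a_i^{m1})L_{\mathfrak{j}_1(\sigma_i)}\eta_i^1\in K_1$ and $W^{(m)}:=\sum_i\rho_2(a_i^{m2})L_{\mathfrak{j}_2(\sigma_i)}\eta_i^2\in K_2$, and threading the intertwining relation through the cross terms, I expect the identity
\[
\langle\zeta,M\zeta\rangle=\sum_m\left\langle\begin{pmatrix}U^{(m)}\\ W^{(m)}\end{pmatrix},\begin{pmatrix}I & V\odot I_{\mathcal{H}}\\ V^*\odot I_{\mathcal{H}} & I\end{pmatrix}\begin{pmatrix}U^{(m)}\\ W^{(m)}\end{pmatrix}\right\rangle_{K_1\oplus K_2}.
\]
Since $V\odot I_{\mathcal{H}}$ is a contraction, the $2\times 2$ operator matrix above is positive (it factors through $I-(V\odot I_{\mathcal{H}})^*(V\odot I_{\mathcal{H}})\ge 0$), so every summand is nonnegative; hence $\langle\zeta,M\zeta\rangle\ge 0$ for all $\zeta$, forcing $M\ge 0$ in $M_2(\mathcal{B})$, which is exactly the CPD condition for $\mathfrak{K}$.

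The routine part is the Stinespring bookkeeping; the step I expect to require the most care is the block expansion in the penultimate display---correctly separating the four blocks, using the homomorphism property of $\rho_i$ to form squares, and routing the $(1,2)$ and $(2,1)$ cross terms onto $V\odot I_{\mathcal{H}}$ and its adjoint via the intertwining relation---together with the preliminary verification of the formula for $\mathfrak{L}^*$, which is where the bilinearity and adjointability of $V$ are genuinely used.
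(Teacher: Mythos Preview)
Your argument is correct, but it is genuinely different from the paper's proof. The paper never passes to a Hilbert-space (Stinespring) realization; instead it stays at the module level and decomposes $\mathfrak{K}$ as a sum of two manifestly CPD kernels. Setting $\mathfrak{k}(\sigma):=V\mathfrak{j}_2(\sigma)\in F_1$ and using that $V$ is bilinear, the paper writes
\[
\mathfrak{K}^{\sigma,\sigma'}\!\begin{pmatrix}a_{11}&a_{12}\\a_{21}&a_{22}\end{pmatrix}
=\begin{pmatrix}\langle\mathfrak{j}_1(\sigma),a_{11}\mathfrak{j}_1(\sigma')\rangle&\langle\mathfrak{j}_1(\sigma),a_{12}\mathfrak{k}(\sigma')\rangle\\ \langle\mathfrak{k}(\sigma),a_{21}\mathfrak{j}_1(\sigma')\rangle&\langle\mathfrak{k}(\sigma),a_{22}\mathfrak{k}(\sigma')\rangle\end{pmatrix}
+\begin{pmatrix}0&0\\0&\langle\mathfrak{j}_2(\sigma),a_{22}(I_{F_2}-V^*V)\mathfrak{j}_2(\sigma')\rangle\end{pmatrix}.
\]
The first summand is in Kolmogorov form on $F_1$ (hence CPD), and the second is CPD because $I_{F_2}-V^*V\ge 0$ is bilinear. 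This is shorter and coordinate-free: the contractivity of $V$ enters once, through the positivity of the defect operator. Your route, by contrast, builds the explicit Hilbert-space vectors $U^{(m)},W^{(m)}$ and reduces everything to the positivity of the $2\times 2$ operator matrix $\begin{pmatrix}I&V\odot I_{\mathcal H}\\ (V\odot I_{\mathcal H})^*&I\end{pmatrix}$; this is more computational but makes the role of the intertwining relation very transparent and would adapt readily to settings where one is handed Stinespring data rather than Kolmogorov modules. Both arguments ultimately exploit the same fact (contractivity of $V$), just packaged differently: the paper via the defect $I-V^*V$, you via the associated positive $2\times2$ block.
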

	
	\begin{proof}
		Let $\mathfrak{k} (\sigma)=V\mathfrak{j}_2(\sigma)\in F_1$ and $\sigma',\sigma\in S$ we have
		\begin{equation*}
			\mathfrak{K}^{\sigma,\sigma'} \begin{pmatrix}
				a_{11} &a_{12}\\a_{21}&a_{22}
			\end{pmatrix}
			= \begin{pmatrix}
				\langle \mathfrak{j}_1(\sigma),a_{11} \mathfrak{j}_1(\sigma') \rangle&\langle \mathfrak{j}_1(\sigma),a_{12} \mathfrak{k}(\sigma') \rangle\\\langle \mathfrak{k}(\sigma'), a_{21}\mathfrak{j}_1(\sigma)\rangle&\langle \mathfrak{k}(\sigma), a_{22}\mathfrak{k}(\sigma')\rangle
			\end{pmatrix} +\begin{pmatrix}
				0&0\\0&\langle \mathfrak{j}_2(\sigma), a_{22}(I_{F_2}-V^*V)\mathfrak{j}_2(\sigma')\rangle
			\end{pmatrix}
		\end{equation*} for all $\begin{pmatrix}
			a_{11} &a_{12}\\a_{21}&a_{22}
		\end{pmatrix}\in M_2(\mathcal{A}).$
		It is easy to verify that $\begin{pmatrix}
			a_{11} &a_{12}\\a_{21}&a_{22}
		\end{pmatrix}\mapsto\begin{pmatrix}
			\langle \mathfrak{j}_1(\sigma),a_{11} \mathfrak{j}_1(\sigma') \rangle&\langle \mathfrak{j}_1(\sigma),a_{12} \mathfrak{k}(\sigma') \rangle\\\langle \mathfrak{k}(\sigma), a_{21}\mathfrak{j}_1(\sigma')\rangle&\langle \mathfrak{k}(\sigma), a_{22}\mathfrak{k}(\sigma')\rangle
		\end{pmatrix} $ is a CPD-kernel over $S$.
		Since $(I_{F_2}-V^*V)$ is positive and bilinear,  $$\begin{pmatrix}
		a_{11} &a_{12}\\a_{21}&a_{22}
		\end{pmatrix}\mapsto\begin{pmatrix}
			0&0\\0&\langle \mathfrak{j}_2(\sigma), a_{22}(I_{F_2}-V^*V)\mathfrak{j}_2(\sigma')\rangle
		\end{pmatrix}$$ is also a CPD-kernel over $S$. Therefore $\mathfrak{K}$ is a CPD-kernel over $S$.
	\end{proof}
	
	Suppose that $E$ is a Hilbert $M_2(\mathcal{B})$-module. Define a $\mathcal{B}$-valued semi-inner product  $\langle \cdot,\cdot\rangle_\Sigma$ and a right $\mathcal{B}$-module action on $E$ by
	\begin{equation*}\label{eq-right-act}
		\langle u,v\rangle_\Sigma:=\sum\limits_{s,r=1}^{2}\langle u,v\rangle_{r,s}  \text{ and }	ub:=u\begin{pmatrix}
			b  & 0\\0 & b
		\end{pmatrix} \quad\text{for }  b \in \mathcal{B}, u,v\in E,
	\end{equation*}
and the notation $\langle u,v\rangle_{r,s}$ indicates the $(r,s)$\textsuperscript{th} entry of $\langle u,v\rangle\in M_2(\mathcal{B}).$	

We denote $E^{(\mathcal{B})}$ for quotient space $E/M$ with $M=\{u:\langle  u,u\rangle_\Sigma=0\},$ and $[u]$ (or $[u]_E$) for coset $u+M$ of $u\in E.$ Note that $E^{(\mathcal{B})}$ is a pre-Hilbert $\mathcal{B}$-module with the inner product and right $\mathcal{B}$-module action defined by
	\begin{equation}
		\langle [u],[v]\rangle=\langle u,v\rangle_\Sigma=\sum_{r,s=1}^2 \langle u,v\rangle_{r,s}  \text{ and }  	[u]b=[u\begin{pmatrix}
			b  & 0\\0 & b
		\end{pmatrix}] \quad\text{for }\quad b\in\mathcal{B},u,v\in E.
	\end{equation}
	Further, assume that $E$ is a Hilbert $M_2(\mathcal{B})$-module (resp. von Neumann $M_2(\mathcal{B})$-module), then $E^{(\mathcal{B})}$ is a Hilbert $\mathcal{B}$-module (resp. von Neumann $\mathcal{B}$-module). 
	
	Suppose that $E$ is a Hilbert $M_2(\mathcal{B})$-module with a nondegenerate left module action of $\mathcal{A},$ then the canonical left module action of $\mathcal{A}$ on $E^{(\mathcal{B})}$ defined by
	\begin{equation}\label{new-left-act}
		b [u]:=[bu]\quad\text{for } u\in E,b\in\mathcal{A}.
	\end{equation}
	Moreover, if $F$ is a $M_2(\mathcal{B})$-submodule of $M_2(\mathcal{B})$-module $E,$ then $F^{(\mathcal{B})}\simeq \{[u]_E:u\in F\}\subseteq E^{(\mathcal{B})}.$ 

	\begin{remark}\label{obs-new-module}
		Let $E$ be a (von Neumann) Hilbert $\mathcal{A}$-$M_2(\mathcal{B})$-module. Then $E^{(\mathcal{B})}$ is a (von Neumann) Hilbert $\mathcal{A}$-$\mathcal{B}$-module, where the left module action given in Equation \eqref{new-left-act}. Further, assume that $E$ is a (von Neumann) Hilbert $M_2(\mathcal{A})$-$M_2(\mathcal{B})$-module, then $E$ is consider as $\mathcal{A}$-$M_2(\mathcal{B})$-module, where the left module action of $\mathcal{A}$ is defined by
			\begin{equation*}
				bu:=\begin{pmatrix}
					b &0\\0&b
				\end{pmatrix}u\quad \text{for} \quad u\in E, b\in \mathcal{A},
			\end{equation*}
		and hence $E^{(\mathcal{B})}$ is a (von Neumann) Hilbert $\mathcal{A}$-$\mathcal{B}$-module.
	
		\end{remark}

	Now we are ready to state the main result of this section.
	\begin{theorem}\label{main-single}
		Suppose that $\mathcal{A}$ is a unital $C^*$-algebra and $\mathcal{B}$ is a vNa on $\mathcal{H}.$ Suppose $\mathfrak{K}_i:S\times S\to B(\mathcal{A},\mathcal{B})$ is a CPD-kernel with Kolmogorov-representation $(E_i,\mathfrak{j}_i)$ for $i\in I_2.$ Let $\mathfrak{K}=\begin{pmatrix}
			\mathfrak{K}_1 & \mathfrak{L}\\\mathfrak{L}^*& \mathfrak{K}_2
		\end{pmatrix}:S\times S\to  B(M_2(\mathcal{A}), M_2(\mathcal{B}))$ be a block CPD-kernel for some CB-kernel $\mathfrak{L}:S\times S\to  B(\mathcal{A},\mathcal{B}),$ then there exists a bilinear adjointable contraction $V:E_2\to E_1 $ with $\mathfrak{L}^{\sigma,\sigma'}(a)=\langle \mathfrak{j}_1(\sigma), Va \mathfrak{j}_2(\sigma')\rangle$ for every $\sigma',\sigma\in S$ and $a\in \mathcal{A}.$
	\end{theorem}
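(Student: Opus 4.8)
The plan is to mimic the proof of the Bhat--Kumar theorem (Theorem \ref{DS4}), with Paschke's GNS module replaced by the Kolmogorov module of the \emph{whole} block kernel. First I would pass to the (von Neumann) Kolmogorov representation $(E,\mathfrak{j})$ of the block CPD-kernel $\mathfrak{K}\colon S\times S\to B(M_2(\mathcal A),M_2(\mathcal B))$, so that $E$ is a von Neumann $M_2(\mathcal A)$-$M_2(\mathcal B)$-module with $\mathfrak{K}^{\sigma,\sigma'}(A)=\langle\mathfrak{j}(\sigma),A\mathfrak{j}(\sigma')\rangle$ and $E=\overline{\operatorname{span}}\,M_2(\mathcal A)\mathfrak{j}(\sigma)M_2(\mathcal B)$; the left action is a $*$-representation $\rho\colon M_2(\mathcal A)\to B^a(E)$. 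Using Remark \ref{obs-new-module} I would form the von Neumann $\mathcal A$-$\mathcal B$-module $E^{(\mathcal B)}$, onto which $\rho$ descends to a representation $\lambda\colon M_2(\mathcal A)\to B^a(E^{(\mathcal B)})$: indeed the inequality $\langle Cu,Cu\rangle\le\|C\|^2\langle u,u\rangle$ in $M_2(\mathcal B)$, compressed along the column $(1\ 1)^{\mathrm t}$, shows that $C$ preserves $\{u:\langle u,u\rangle_{\Sigma}=0\}$ and acts contractively on cosets. Writing $e_{ij}$ for the matrix units of $M_2(\mathcal A)$, the operators $P_i:=\lambda(e_{ii})$ are complementary projections with $P_1+P_2=I$, and $\lambda(e_{12})$ is a partial isometry, with $\lambda(e_{12})^*=\lambda(e_{21})$, restricting to a unitary $P_2E^{(\mathcal B)}\to P_1E^{(\mathcal B)}$.

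Next I would realise $E_1$ and $E_2$ inside $E^{(\mathcal B)}$. Setting $\mathfrak{j}_i'(\sigma):=[e_{ii}\mathfrak{j}(\sigma)]\in P_iE^{(\mathcal B)}$, a direct computation using that $\mathfrak{K}$ is a block kernel yields $\langle\mathfrak{j}_i'(\sigma),a\,\mathfrak{j}_i'(\sigma')\rangle=\sum_{r,s}\mathfrak{K}^{\sigma,\sigma'}\big(e_{ii}\,\diag(a,a)\big)_{r,s}=\mathfrak{K}_i^{\sigma,\sigma'}(a)$ for $a\in\mathcal A$, where the $\mathcal A$-action on $E^{(\mathcal B)}$ is $\lambda(\diag(a,a))$. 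Hence the von Neumann submodule $E_i':=\overline{\operatorname{span}}\{a\,\mathfrak{j}_i'(\sigma)b\}$ is a minimal Kolmogorov module for $\mathfrak{K}_i$, and by uniqueness of the minimal Kolmogorov representation there are bilinear unitaries $E_i\cong E_i'$ carrying $\mathfrak{j}_i(\sigma)$ to $\mathfrak{j}_i'(\sigma)$ for $i\in I_2$.

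I would then define $V$ as the composition of $\lambda(e_{12})\colon E_2'\to P_1E^{(\mathcal B)}$ with the orthogonal projection $Q_1\colon P_1E^{(\mathcal B)}\to E_1'$, transported through the identifications $E_2\cong E_2'$ and $E_1\cong E_1'$. Here $Q_1$ exists because $E_1'$ is a von Neumann submodule of a von Neumann module and the latter are self-dual, hence orthogonally complemented. Since $\lambda(e_{12})$ restricts to an isometry on $E_2'\subseteq P_2E^{(\mathcal B)}$ and $Q_1$ is a contraction, $V$ is contractive; it is $\mathcal A$-$\mathcal B$-bilinear because $\lambda(e_{12})$ commutes with the diagonal $\mathcal A$-action (as $e_{12}\,\diag(a,a)=\diag(a,a)\,e_{12}$) and $Q_1$ commutes with the adjointable actions, and it is adjointable by self-duality. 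Finally, using $Q_1\mathfrak{j}_1'(\sigma)=\mathfrak{j}_1'(\sigma)$ together with $e_{11}e_{12}=e_{12}$ and the block form of $\mathfrak{K}$, the required identity follows from
\[
\langle \mathfrak{j}_1(\sigma), V a\,\mathfrak{j}_2(\sigma')\rangle
=\langle \mathfrak{j}_1'(\sigma),\lambda(e_{12})\,\diag(a,a)\,\mathfrak{j}_2'(\sigma')\rangle
=\sum_{r,s=1}^{2}\mathfrak{K}^{\sigma,\sigma'}\!\big(e_{12}\,\diag(a,a)\big)_{r,s}
=\mathfrak{L}^{\sigma,\sigma'}(a),
\]
since $e_{12}\,\diag(a,a)$ is the matrix with $a$ in the $(1,2)$-slot and zeros elsewhere.

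The main obstacle I anticipate is setting up the von Neumann module bookkeeping correctly rather than the algebra: one must check that the full off-diagonal matrix units $e_{12},e_{21}$ descend from $\rho$ to genuine adjointable operators on $E^{(\mathcal B)}$ (so that $\lambda$ is a $*$-representation and $\lambda(e_{12})$ is the asserted unitary between the corners), and that $E_1'$ is orthogonally complemented so that $Q_1$ is available; the contractivity of $V$ then comes for free from $\|Q_1\|\le 1$ and the isometry property of $\lambda(e_{12})|_{E_2'}$. The hypothesis that $\mathcal B$ is a von Neumann algebra is exactly what powers this step, while the complete boundedness of $\mathfrak{L}$ only enters to guarantee that $\mathfrak{K}$ is a well-defined CPD-kernel with bounded off-diagonal, so that its Kolmogorov module exists to begin with.
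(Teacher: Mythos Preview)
Your proposal is correct and follows essentially the same route as the paper. Both arguments take the Kolmogorov module of the full block kernel $\mathfrak{K}$, pass to the associated $\mathcal{B}$-module via the $(\cdot)^{(\mathcal{B})}$ construction of Remark~\ref{obs-new-module}, identify Kolmogorov representations of $\mathfrak{K}_1,\mathfrak{K}_2$ inside the two corners, and then build $V$ by composing the off-diagonal matrix-unit action $[u]\mapsto[\mathbb{F}_{12}u]$ with the canonical identifications. The only organisational difference is that the paper first splits the $M_2(\mathcal{B})$-module as $\hat F_1\oplus\hat F_2$ and then applies $(\cdot)^{(\mathcal{B})}$ cornerwise, whereas you first descend to $E^{(\mathcal{B})}$ and then split via $P_i=\lambda(e_{ii})$; the paper in fact records the isomorphism $F^{(\mathcal{B})}\simeq\hat F_1^{(\mathcal{B})}\oplus\hat F_2^{(\mathcal{B})}$, so the two orderings are visibly equivalent. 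Your observation that the \emph{full} $M_2(\mathcal{A})$-action descends to $E^{(\mathcal{B})}$ (via positivity of the compression by $(1,1)^{\mathrm t}$) is a clean way to obtain the unitary $\lambda(e_{12})\colon P_2E^{(\mathcal{B})}\to P_1E^{(\mathcal{B})}$, which the paper obtains instead by a direct computation for $\mathbb{F}_{12}$.

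One small point to tighten: as stated in the paper (see the uniqueness remark following the theorem), the given $(E_i,\mathfrak{j}_i)$ need not be \emph{minimal} Kolmogorov representations, so your sentence ``by uniqueness of the minimal Kolmogorov representation there are bilinear unitaries $E_i\cong E_i'$'' should be replaced by: take the bilinear unitary between the minimal submodule $\widetilde E_i=\overline{\operatorname{span}}^{\,s}\mathcal{A}\mathfrak{j}_i(\sigma)\mathcal{B}\subseteq E_i$ and $E_i'$, and extend by zero to a bilinear partial isometry $T_i\colon E_i\to P_iE^{(\mathcal{B})}$; then set $V=T_1^*\lambda(e_{12})T_2$. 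This is exactly what the paper does, and it does not affect the rest of your argument.
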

	\begin{proof}
		Suppose that $(F,\mathfrak{k})$ is a (minimal) Kolmogorov-representation for $\mathfrak{K}$ over $S,$ then $F$ is a von Neumann $M_2(\mathcal{B})$-module as well as Hilbert $M_2(\mathcal{A})$-$M_2(\mathcal{B})$-module. For $i,j\in I_2,$ set $\mathbb{F}_{ij}:=\mathbf{1} \otimes {\bf e}_{ij}$  in $ \mathcal{B}\otimes M_2,$ or $\mathcal{A}\otimes M_2$,
		where $\{{\bf e}_{ij}\}$'s denote the units of $M_2.$ Clearly $\mathbb{F}_{ii}$'s are projections. Let $\hat{F}_i:=\mathbb{F}_{ii} F\subset F,i\in I_2,$ then $\hat{F}_i$'s are $M_2(\mathcal{B})$-submodules of $F$ with SOT closed and $F=\hat{F}_1\oplus \hat{F}_2.$
		
		Let  $\mathfrak{k}_i(\sigma):=\mathbb{F}_{ii}\mathfrak{k}(\sigma)\mathbb{F}_{ii}\in \hat{F}_i, i\in I_2$ and $\sigma\in S,$ then $\langle \mathfrak{k}_1(\sigma),\mathfrak{k}_2(\sigma')\rangle=0$ for $\sigma',\sigma\in S.$ For $i,j\in I_2,\sigma\in S$ and $i\ne j$ we get
		\begin{equation*}
	\|{\mathfrak{k}_i(\sigma)-\mathbb{F}_{ii}\mathfrak{k}(\sigma)}\|^ 2=\|{\mathbb{F}_{ii}\mathfrak{k}(\sigma)\mathbb{F}_{jj}}\|^2=\|{\langle \mathbb{F}_{ii}\mathfrak{k}(\sigma)\mathbb{F}_{jj}, \mathbb{F}_{ii}\mathfrak{k}(\sigma)\mathbb{F}_{jj}\rangle}\|=\|{\mathbb{F}_{jj}\mathfrak{K}^{\sigma,\sigma}(\mathbb{F}_{ii})\mathbb{F}_{jj}}\|=0
		\end{equation*}
		and \begin{equation*}
			\|{\mathfrak{k}_i(\sigma)-\mathfrak{k}(\sigma)\mathbb{F}_{ii}}\|^ 2=\|{\mathbb{F}_{jj}\mathfrak{k}(\sigma)\mathbb{F}_{ii}}\|^2=\|{\langle \mathbb{F}_{jj}\mathfrak{k}(\sigma)\mathbb{F}_{ii}, \mathbb{F}_{jj}\mathfrak{k}(\sigma)\mathbb{F}_{ii}\rangle}\|=\|{\mathbb{F}_{ii}\mathfrak{K}^{\sigma,\sigma}(\mathbb{F}_{jj})\mathbb{F}_{ii}}\|=0.
		\end{equation*}
		This shows that
		\begin{equation}
			\mathfrak{k}_i(\sigma)=\mathbb{F}_{ii}\mathfrak{k}(\sigma)=\mathfrak{k}(\sigma)\mathbb{F}_{ii}, i\in I_2  \text{ and  thus }\mathfrak{k}(\sigma)=(\mathbb{F}_{11}+\mathbb{F}_{22})\mathfrak{k}(\sigma)=\mathfrak{k}_1(\sigma)+	\mathfrak{k}_2(\sigma).\label{eq-main-1}
		\end{equation}
		Since $\mathfrak{K}$ is a block kernel over $S$, for $\sigma',\sigma\in S,A\in M_2(\mathcal{A})$ and from Equation \eqref{eq-main-1} we obtain
		\begin{equation*}
			\mathfrak{K}^{\sigma,\sigma'}(A)=\langle \mathfrak{k}(\sigma),A\mathfrak{k}(\sigma')\rangle=\sum_{j,i=1}^2\langle	\mathfrak{k}_i(\sigma),A	\mathfrak{k}_j(\sigma')\rangle=\begin{pmatrix}
				\langle	\mathfrak{k}_1(\sigma),A	\mathfrak{k}_1(\sigma')\rangle_{11}&\langle 	\mathfrak{k}_1(\sigma),A	\mathfrak{k}_2(\sigma') \rangle_{12}\\\langle 	\mathfrak{k}_2(\sigma),A	\mathfrak{k}_1(\sigma')\rangle_{21}&\langle 	\mathfrak{k}_2(\sigma),A	\mathfrak{k}_2(\sigma')\rangle_{22}
			\end{pmatrix}.
		\end{equation*}
		
		Suppose that $F^{(\mathcal{B})}$ is the von Neumann $\mathcal{B}$-module and Hilbert $\mathcal{A}$-$\mathcal{B}$-module (see Remark \ref{obs-new-module}), and suppose that the von Neumann $\mathcal{B}$-modules $ \hat{F}_i^{(\mathcal{B})}$ for $i\in I_2.$ Note that $\hat{F}_i$ has a non-degenerate left module action of $\mathcal{A}$ defined by
		\begin{equation}
			b u:=\begin{pmatrix}
				b &0\\0&b
			\end{pmatrix}u\quad\text{for }  u\in \hat{F}_i,b\in\mathcal{A}, i\in I_2.
		\end{equation}
		From Remark (\ref{obs-new-module}), $\hat{F}_i^{(\mathcal{B})}$ is a Hilbert $\mathcal{A}$-$\mathcal{B}$-module for $i\in I_2.$
		And hence  $F^{(\mathcal{B})}\simeq \hat{F}_1^{(\mathcal{B})}\oplus \hat{F}_2^{(\mathcal{B})}$ (via $[\mathfrak{k}(\sigma)]_F\mapsto[\mathbb{F}_{11}\mathfrak{k}(\sigma)]_{\hat{F}_1}+ [\mathbb{F}_{22}\mathfrak{k}(\sigma)]_{\hat{F}_2}$ for $ \mathfrak{k}(\sigma)\in F$). 
		For $a\in \mathcal{A}$ and $i\in I_2$ we get
		$$\langle [\mathfrak{k}_i(\sigma)],a[\mathfrak{k}_i(\sigma')]\rangle=\sum_{s,r=1}^2\left\langle \mathbb{F}_{ii}\mathfrak{k}(\sigma), \begin{pmatrix} a&0\\0&a\end{pmatrix}\mathbb{F}_{ii}\mathfrak{k}(\sigma')\right\rangle_{r,s}=\sum_{s,r=1}^2\mathfrak{K}^{\sigma,\sigma'}\left(\mathbb{F}_{ii}\begin{pmatrix} a&0\\0&a\end{pmatrix}\mathbb{F}_{ii}\right)_{r,s}=\mathfrak{K}_i^{\sigma,\sigma'}(a).$$
		Therefore $(\hat{F}_i^{(\mathcal{B})},[\mathfrak{k}_i])$ is a Kolmogorov-representations (need not be minimal) for $\mathfrak{K}_i,i\in I_2$ over $S.$
		Define a map $U:\hat{F}_2^{(\mathcal{B})}\to \hat{F}_1^{(\mathcal{B})}$  by $U[u]=[\mathbb{F}_{12}u]$ for all $u\in\hat{F}_2.$ Let $u,v\in \hat{F}_2$ we have
		\begin{equation*}
			\langle U[u],U[v]\rangle=\sum_{s,r=1}^2\langle \mathbb{F}_{12}u,\mathbb{F}_{12}v\rangle_{r,s}=
			\sum_{s,r=1}^2\langle u,\mathbb{F}_{21}\mathbb{F}_{12}v\rangle_{r,s}=\sum_{s,r=1}^2\langle u, v\rangle_{r,s}=\langle [u],[v]\rangle.
		\end{equation*}
	Let $w\in \hat{F}_1,$ then $\mathbb{F}_{21}w\in \hat{F}_2,$ and hence
		\begin{equation*}
			U[\mathbb{F}_{21}w]=[\mathbb{F}_{12}\mathbb{F}_{21}w]=[\mathbb{F}_{11}w]=[w].
		\end{equation*}
		This implies that $U$ is a unitary. Let $u\in \hat{E}_2$ and $a\in\mathcal{A}$ we have
		\begin{equation*}
			Ua[u]=U[\begin{pmatrix} a &0\\ 0&a\end{pmatrix}u]=[\mathbb{F}_{12}\begin{pmatrix} a &0\\ 0&a\end{pmatrix}u]=[\begin{pmatrix} a &0\\ 0&a\end{pmatrix}\mathbb{F}_{12}u]=a[\mathbb{F}_{12}u]=aU[u].
		\end{equation*}
		Therefore $U:\hat{F}_2^{(\mathcal{B})}\to \hat{F}_1^{(\mathcal{B})}$ is a (adjointable) bilinear unitary. Let $\tilde{E_i}:=\overline{span}^s \mathcal{A} \mathfrak{j}_i(\sigma)\mathcal{B} \subseteq E_i$ and  $\tilde{F_i}=\overline{span}^s\mathcal{A} [\mathfrak{k}_i(\sigma)] \mathcal{B}\subseteq \hat{F}_i^{(\mathcal{B})},$ then $(\tilde{E_i},\mathfrak{j}_i)$ and $(\tilde{F_i},[\mathfrak{k}_i])$ are minimal Kolmogorov-representations for $\mathfrak{K}_i,i\in I_2$ over $S.$ And hence $\tilde{T}_i:\tilde{E}_i\to \tilde{F}_i$ defined by $$\tilde{T_i}(a\mathfrak{j}_i(\sigma)b)=a[\mathfrak{k}_i(\sigma)]b,\quad b\in\mathcal{B},\sigma\in S, a\in\mathcal{A},i\in I_2,$$ extends to a (adjointable) bilinear unitary. Suppose $T_i:E_i\to\hat{F}_i^{(\mathcal{B})}$ is the extension of $\tilde{T_i},$ and observe that $T_i$  is a bilinear partial isometry. Let $V:=T_1^*UT_2,$ for $a\in\mathcal{A}$ and $\sigma',\sigma\in S$ we have
		\begin{align*}
			\langle \mathfrak{j}_1(\sigma), Va \mathfrak{j}_2(\sigma')\rangle &=\langle \mathfrak{j}_1(\sigma), T_1^*UT_2a \mathfrak{j}_2(\sigma') \rangle=\langle T_1 \mathfrak{j}_1(\sigma),UT_2a \mathfrak{j}_2(\sigma')\rangle=\langle \tilde{T}_1\mathfrak{j}_1(\sigma), U\tilde{T}_2 a\mathfrak{j}_2(\sigma')\rangle\\
			&=\langle [\mathfrak{k}_1(\sigma)], Ua[\mathfrak{k}_2(\sigma')]\rangle 
			=\sum_{s,r=1}^2\left\langle \mathbb{F}_{11}\mathfrak{k}(\sigma),\mathbb{F}_{12}\begin{pmatrix}
				a &0\\0 &a
			\end{pmatrix}\mathbb{F}_{22}\mathfrak{k}(\sigma')\right\rangle _{r,s}\\
			&=\sum_{s,r=1}^2\left\langle \mathfrak{k}(\sigma), \begin{pmatrix}
				0& a\\0&0
			\end{pmatrix}\mathfrak{k}(\sigma')\right\rangle_{r,s}=\sum_{s,r=1}^2\mathfrak{K}^{\sigma,\sigma'}\begin{pmatrix}
				0&a\\0& 0
			\end{pmatrix}_{r,s}=\sum_{s,r=1}^2\begin{pmatrix}
				0&\mathfrak{L}^{\sigma,\sigma'}(a)\\0& 0
			\end{pmatrix}_{r,s}=\mathfrak{L}^{\sigma,\sigma'}(a).
		\end{align*}
		This shows that $\mathfrak{L}^{\sigma,\sigma'}(a)=\langle \mathfrak{j}_1(\sigma), Va \mathfrak{j}_2(\sigma')\rangle$ for every $a\in \mathcal{A}$ and $\sigma',\sigma\in S.$ 
	\end{proof}
	
	\begin{remark}
		(Uniqueness for $V$). In the setting of Theorem \ref{main-single}, let $V',V:E_2\to E_1 $ be two bilinear adjointable contractions with $\mathfrak{L}^{\sigma,\sigma'}(a)=\langle \mathfrak{j}_1(\sigma), Va \mathfrak{j}_2(\sigma')\rangle=\langle \mathfrak{j}_1(\sigma), V'a \mathfrak{j}_2(\sigma')\rangle$ for all $a\in \mathcal{A}$ and $\sigma',\sigma\in S,$ then 
		\begin{align*}
			\langle a_1 \mathfrak{j}_1(\sigma) b_1, V(a_2 \mathfrak{j}_2(\sigma') b_2)\rangle &= b_1^* \langle  \mathfrak{j}_1(\sigma) , V((a_1^*a_2)\mathfrak{j}_2(\sigma'))\rangle b_2=  b_1^* \langle  \mathfrak{j}_1(\sigma) , V'((a_1^*a_2) \mathfrak{j}_2(\sigma'))\rangle b_2\\&= \langle a_1 \mathfrak{j}_1(\sigma) b_1, V'(a_2 \mathfrak{j}_2(\sigma') b_2)\rangle
		\end{align*} for all $b_1,b_2\in \mathcal{B},a_1,a_2\in \mathcal{A},\sigma',\sigma\in S.$ Thus $P_{\widetilde{E}_1}VP_{\widetilde{E}_2}=P_{\widetilde{E}_1}V'P_{\widetilde{E}_2}$ where $P_{\widetilde{E}_i}:E_i\to E_i$ is the projection of $E_i$
		onto $\widetilde{E}_i$. This implies that contraction $V$ as in Theorem \ref{main-single} is unique if $E_i$'s are minimal Kolmogorov-modules.
	\end{remark}

	The following corollary is a generalization of \cite[Corollary 3.9]{BK20} and \cite[Corollary 2.7]{PC85}.
	
	\begin{corollary}
		Assume that $\mathcal{A}$ is a unital $C^*$-algebra. Let $ \mathfrak{K}_i:S\times S\to B(\mathcal{A},B(\mathcal{H}))$ be a CPD-kernel with minimal Kolmogorov Stinespring representation $({K}_i, \rho_i, L_{\mathfrak{j}_{i}(\sigma)})$ for $i\in I_2$ over a set $S.$ Suppose that $\mathfrak{K}=\begin{pmatrix}
			\mathfrak{K}_1 & \mathfrak{L}\\\mathfrak{L}^*& \mathfrak{K}_2
		\end{pmatrix}:S\times S\to B(M_2(\mathcal{A}), M_2(B(\mathcal{H})))$ is a block CPD-kernel for some  CB-kernel $\mathfrak{L}:S\times S\to B(\mathcal{A},B(\mathcal{H})).$ Then there exists a contraction (unique) $V:{K}_2\to {K}_1$ such that $V\rho_2(a)=\rho_1(a)V$ and $\mathfrak{L}^{\sigma,\sigma'}(a)=L_{\mathfrak{j}_{1}(\sigma)}^*V\rho_2(a)L_{\mathfrak{j}_{2}(\sigma')}$ for $a\in \mathcal{A}$ and $\sigma,\sigma' \in S.$
	\end{corollary}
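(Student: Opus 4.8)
The plan is to reduce everything to Theorem \ref{main-single} by passing between the module picture and the Stinespring picture recalled in the Preliminaries. Since here the target algebra is $B(\mathcal{H})$, a von Neumann algebra acting nondegenerately on $\mathcal{H}$, Theorem \ref{main-single} applies directly (with $\mathcal{B}=B(\mathcal{H})$): it yields a bilinear adjointable contraction $W:E_2\to E_1$ between the Kolmogorov modules $(E_i,\mathfrak{j}_i)$ associated to the minimal Kolmogorov Stinespring representations $(K_i,\rho_i,L_{\mathfrak{j}_i(\sigma)})$, satisfying $\mathfrak{L}^{\sigma,\sigma'}(a)=\langle \mathfrak{j}_1(\sigma),Wa\mathfrak{j}_2(\sigma')\rangle$ for all $a\in\mathcal{A}$, $\sigma,\sigma'\in S$. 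Recall from the Preliminaries that $K_i=E_i\odot\mathcal{H}$, $\rho_i(a)=a\odot I_{\mathcal{H}}$, and $L_{\mathfrak{j}_i(\sigma)}h=\mathfrak{j}_i(\sigma)\odot h$, with adjoint $L_{\mathfrak{j}_i(\sigma)}^*(y\odot h)=\langle \mathfrak{j}_i(\sigma),y\rangle h$.

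The next step is to amplify $W$ to the Hilbert space level by setting $V:=W\odot I_{\mathcal{H}}:K_2\to K_1$, acting on elementary tensors by $V(x\odot h)=W(x)\odot h$; this is well defined because $W$ is right $B(\mathcal{H})$-linear, and it is a contraction since $\|W\|\le 1$ forces the operator-matrix inequality $\big(\langle Wx_i,Wx_j\rangle\big)\le\big(\langle x_i,x_j\rangle\big)$, from which $\|V\zeta\|\le\|\zeta\|$ on elementary tensors and then on all of $K_2$ by density. I would then verify the two asserted identities by evaluating on elementary tensors. The intertwining relation follows from the bilinearity (left $\mathcal{A}$-linearity) of $W$: for $a\in\mathcal{A}$, $V\rho_2(a)(x\odot h)=W(ax)\odot h=aW(x)\odot h=\rho_1(a)V(x\odot h)$, so $V\rho_2(a)=\rho_1(a)V$. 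For the formula, unwinding the definitions gives $L_{\mathfrak{j}_1(\sigma)}^*V\rho_2(a)L_{\mathfrak{j}_2(\sigma')}h=L_{\mathfrak{j}_1(\sigma)}^*\big(W(a\mathfrak{j}_2(\sigma'))\odot h\big)=\langle\mathfrak{j}_1(\sigma),Wa\mathfrak{j}_2(\sigma')\rangle h=\mathfrak{L}^{\sigma,\sigma'}(a)h$, which is exactly the claimed $\mathfrak{L}^{\sigma,\sigma'}(a)=L_{\mathfrak{j}_1(\sigma)}^*V\rho_2(a)L_{\mathfrak{j}_2(\sigma')}$.

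For uniqueness I would exploit minimality, namely $K_i=\overline{\operatorname{span}}\{\rho_i(a)L_{\mathfrak{j}_i(\sigma)}h:a\in\mathcal{A},\,h\in\mathcal{H},\,\sigma\in S\}$. If $V'$ is any contraction obeying both conditions, then on generating vectors the intertwining relation lets me move $\rho_1(a'^*)$ across $V'$, giving $\langle \rho_1(a')L_{\mathfrak{j}_1(\sigma)}h',\,V'\rho_2(a)L_{\mathfrak{j}_2(\sigma')}h\rangle=\langle h',\,\mathfrak{L}^{\sigma,\sigma'}(a'^*a)h\rangle$, an expression depending only on $\mathfrak{L}$ and not on the choice of $V'$. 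Since such vectors are total in $K_1$ and $K_2$, all matrix coefficients of $V'$ are pinned down by $\mathfrak{L}$, so $V'=V$.

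I expect the only point demanding genuine care to be the amplification step, namely confirming that $W\odot I_{\mathcal{H}}$ is a well-defined \emph{contraction} (not merely bounded); the essential input there is the operator-matrix inequality coming from $W^*W\le I_{E_2}$. Everything else is a direct transcription of Theorem \ref{main-single} through the dictionary $E_i\leftrightarrow K_i=E_i\odot\mathcal{H}$ between the Kolmogorov and Kolmogorov Stinespring representations.
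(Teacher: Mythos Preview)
Your proposal is correct and follows essentially the same route as the paper: apply Theorem~\ref{main-single} to obtain the module-level contraction, amplify it to the Hilbert-space level $K_i=E_i\odot\mathcal{H}$, then read off the intertwining relation from left $\mathcal{A}$-linearity, the formula for $\mathfrak{L}$ from the definitions of $L_{\mathfrak{j}_i(\sigma)}$, and uniqueness from minimality. The only cosmetic difference is that the paper defines $V$ on generating vectors and checks contractivity via a rank-one trick $\|\widehat{V}(x)|g\rangle\langle g|\|\le\|x|g\rangle\langle g|\|$, whereas you write $V=W\odot I_{\mathcal{H}}$ and invoke the matrix inequality from $W^*W\le I$; your version is arguably the cleaner way to handle finite sums of elementary tensors.
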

	\begin{proof}
		Since $({K}_i, \rho_i, L_{\mathfrak{j}_{i}(\sigma)})$ is the minimal Kolmogorov Stinespring representation for $\mathfrak{K}_i, i\in I_2.$ Suppose that $(E_i,L_{\mathfrak{j}_{i}(\sigma)})$ is the (minimal) Kolmogorov-representation for $\mathfrak{K}_i,i\in I_2$ (see Preliminaries subsection) with $E_i = \overline{span}^s \rho_i(\mathcal{A})L_{\mathfrak{j}_{i}(\sigma)} B(\mathcal{H}) \subseteq B(\mathcal{H}, K_i),i\in I_2.$ So from Theorem \ref{main-single}, there is a bilinear adjointable contraction $\widehat{V}:E_2\to E_1$ with \begin{equation}\label{ABC1}
			\mathfrak{L}^{\sigma,\sigma'}(a)=\langle L_{\mathfrak{j}_{1}(\sigma)},\widehat{V}\rho_2(a)L_{\mathfrak{j}_{2}(\sigma')} \rangle=L_{\mathfrak{j}_{1}(\sigma)}^*\widehat{V}\rho_2(a)L_{\mathfrak{j}_{2}(\sigma')}\quad for \quad\quad a\in \mathcal{A}\quad \mbox{and} \quad \sigma',\sigma\in S.
		\end{equation}  
		Since $K_i=\overline{span}\{\rho_i{(a)}L_{\mathfrak{j}_{i}(\sigma)}g : \sigma\in S, g\in \mathcal{H},a\in \mathcal{A} \}$ for $i\in I_2.$ Define a map ${V}:{K}_2\to {K}_1$ by $$V(\rho_2(a)L_{\mathfrak{j}_{2}(\sigma)}g)=(\widehat{V}(\rho_2(a)L_{\mathfrak{j}_{2}(\sigma)}))g\quad for \quad\quad a\in \mathcal{A},\sigma\in S,g\in \mathcal{H}.$$  As $\widehat{V}$ is contraction and right $B(\mathcal{H})$-linear, for $a\in \mathcal{A},g\in \mathcal{H}$ and $\sigma\in S$ we get
		\begin{align*}
			{\big{\| \big|}(\widehat{V}(\rho_2(a)L_{\mathfrak{j}_{2}(\sigma)}))g\rangle \langle g\big{| \big\|}}&={\big\|\widehat{V}(\rho_2(a)L_{\mathfrak{j}_{2}(\sigma)})|g\rangle \langle g| \big\|}={\big\|\widehat{V}(\rho_2(a)L_{\mathfrak{j}_{2}(\sigma)}|g\rangle \langle g|)\big\|}\\&\le \big\|\rho_2(a)L_{\mathfrak{j}_{2}(\sigma)}|g\rangle \langle g| \big\|={\big{\| \big|}(\rho_2(a)L_{\mathfrak{j}_{2}(\sigma)})g\rangle \langle g\big{| \big\|}}.
		\end{align*} This shows that $||(\widehat{V}(\rho_2(a)L_{\mathfrak{j}_{2}(\sigma)}))g||\le ||\rho_2(a)L_{\mathfrak{j}_{2}(\sigma)}g||$ for $a\in \mathcal{A},\sigma\in S,g\in \mathcal{H},$ and hence $V$ is a contraction. Since $\widehat{V}$ is left $\mathcal{A}$-linear, for $\sigma\in S,a,a'\in \mathcal{A}$ and $g\in \mathcal{H}$ we obtain 
		\begin{align*}
			V\rho_2(a)(\rho_2(a')L_{\mathfrak{j}_{2}(\sigma)}g)&=V(\rho_2(aa')L_{\mathfrak{j}_{2}(\sigma)}g)=\widehat{V}(\rho_2(aa')L_{\mathfrak{j}_{2}(\sigma)})g=\widehat{V}(\rho_2(a)\rho_2(a')L_{\mathfrak{j}_{2}(\sigma)})g\\&=\rho_1(a)\widehat{V}(\rho_2(a')L_{\mathfrak{j}_{2}(\sigma)})g=\rho_1(a)V(\rho_2(a')L_{\mathfrak{j}_{2}(\sigma)}g).
		\end{align*} This implies that $\rho_1(a)V=V\rho_2(a)$ for $a\in \mathcal{A}.$ From Equation (\ref{ABC1}), $\mathfrak{L}^{\sigma,\sigma'}(a)g=L_{\mathfrak{j}_{1}(\sigma)}^*V\rho_2(a)L_{\mathfrak{j}_{2}(\sigma')}g$ for $g\in 
		\mathcal{H},\sigma',\sigma\in S.$ For the uniqueness of $V,$ let $V'$ be the another contraction such that $\rho_1(a)V'=V'\rho_2(a)$ and $\mathfrak{L}^{\sigma,\sigma'}(a)=L_{\mathfrak{j}_{1}(\sigma)}^*V'\rho_2(a)L_{\mathfrak{j}_{2}(\sigma')}$ for every $\sigma',\sigma\in S$ and $a\in \mathcal{A}.$ For $\sigma',\sigma\in S,g,h\in \mathcal{H}$ and $b,a\in \mathcal{A}$ we have  
		\begin{align*}
			\langle \rho_1(a)L_{\mathfrak{j}_{1}(\sigma)}h,V\rho_2(b)L_{\mathfrak{j}_{2}(\sigma')}g \rangle&=\langle h,L_{\mathfrak{j}_{1}(\sigma)}^*V\rho_2(a^*b)L_{\mathfrak{j}_{2}(\sigma')}g \rangle=\langle h,\mathfrak{L}^{\sigma,\sigma'}(a^*b)g \rangle\\&=\langle h,L_{\mathfrak{j}_{1}(\sigma)}^*V'\rho_2(a^*b)L_{\mathfrak{j}_{2}(\sigma')}g \rangle=	\langle \rho_1(a)L_{\mathfrak{j}_{1}(\sigma)}h,V'\rho_2(b)L_{\mathfrak{j}_{2}(\sigma')}g \rangle.
		\end{align*}
		This implies that the uniqueness of $V.$
	\end{proof}

	\section{Semigroups of Block completely positive definite kernels}

The goal of this section is to study the structure of a semigroup of CPD-kernels along the lines of Theorem \ref{main-single}. Denote by $\mathbb{T}$ the semigroup $\mathbb{N}_0$ or $\mathbb{R}_+$ under addition. 
First we start with the following definitions.
	\begin{definition}
		Let $S$ be a set and $\mathcal{B}$ be a unital $C^*$-algebra. A family $\mathfrak{K}=(\mathfrak{K}_t)_{t\in \mathbb{T}}$ of CPD-kernels over $S$ on $\mathcal{B}$ is called {\rm one-parameter CPD-semigroup} or {\rm quantum dynamical semigroup} (QDS) over $S$ \cite{BBLS04,Sk01} if
		\begin{itemize}
			\item[(1)] $\mathfrak{K}_{t}^{\sigma,\sigma'} \circ \mathfrak{K}_s^{\sigma,\sigma'}=\mathfrak{K}_{t+s}^{\sigma,\sigma'}$ for $\sigma',\sigma\in S$ and $t,s\in \mathbb{T}.$
			\item[(2)] $\mathfrak{K}_{0}^{\sigma,\sigma'}(w)=w$ for  $\sigma',\sigma\in S$ and $w\in \mathcal{B}.$
			\item[(3)] $\mathfrak{K}_{t}^{\sigma,\sigma'}({\bf{1}})\le{\bf{1}}$ for $t\in \mathbb{T}$ and $\sigma',\sigma\in S.$
		\end{itemize}
	\end{definition}
	It is called {\it quantum Markov semigroup} (or QMS) over $S$ \cite{BBLS04,AK01} if $\mathfrak{K}_s$ is unital over $S$ for every $s\in \mathbb{T}.$ 
	
	\begin{definition}\label{DS1}
		Suppose $\mathcal{B}$ is a $C^*$-algebra. Let $F = (F_t)_{t\in \mathbb{T}}$ be a family of Hilbert $\mathcal{B}$-$\mathcal{B}$-modules such that $F_0 = \mathcal{B}$ and $\alpha= (\alpha_{s,t})_{t,s\in \mathbb{T}}$ be a family of bilinear (adjointable) isometries $\alpha_{s,t}:F_{s+t}\to F_s \odot F_t$ with
		\begin{equation}\label{DS2}
			(I_{F_t}\odot \alpha_{s,r})\alpha_{t,s+r}=(\alpha_{t,s}\odot I_{F_r})\alpha_{t+s,r}\quad \mbox{for} \quad {t,r,s\in \mathbb{T}},
		\end{equation}  then we say that $(F, \alpha)$ is an {\rm inclusion system}. It is called {\rm product system} if $\alpha_{s,t}$ is a unitary for ${s,t\in \mathbb{T}}.$ 	Further, assume that $\mathcal{B}$ is a vNa, then we consider the inclusion system of von Neumann $\mathcal{B}$-$\mathcal{B}$-modules.
	\end{definition}

	\begin{definition}
		Suppose that $(F, \alpha)$ is an inclusion system. Let $\eta^{\sigma \odot }=(\eta_s^{\sigma})_{s\in \mathbb{T}}$ be a family of vectors $\eta_s^{\sigma}\in F_s$ is said to be {\rm unit} of $(F, \alpha)$ if $\alpha_{s,t}(\eta_{s+t}^{\sigma})=\eta_s^{\sigma}\odot \eta_t^{\sigma}$ for $s,t\in \mathbb{T}.$ It is called {\rm generating} if $\eta_s^{\sigma}$ is cyclic in $F_s$ for every $s\in \mathbb{T},$ and {\rm unital} if $\langle \eta_s^{\sigma},\eta_s^{\sigma}\rangle=1$
		for ${s\in \mathbb{T}}$ and $\sigma\in S.$ Further, assume that $(F, \alpha)$ is a product system,
		a unit $\eta^{\sigma \odot}=(\eta_s^{\sigma})_{s\in \mathbb{T}}$ is called {\rm generating} unit for the product system $(F, \alpha)$ if $F_s$ is spanned
		by images of vectors $b_n\eta_{s_n}^{\sigma} \odot \dots \odot b_1\eta_{s_1}^{\sigma}b_0$ $( \sum s_i = s,s_i\in \mathbb{T},b_i\in \mathcal{B},\sigma\in S)$ under the successive applications of the appropriate mappings $I \odot \alpha^*_{t,t'}\odot I.$
	\end{definition}
	\begin{remark}\label{comp}
		Let $\mathfrak{K}:S\times S\to B(\mathcal{A},\mathcal{B})$ and $ \mathfrak{L}:S\times S\to B(\mathcal{B},\mathcal{C})$ be two CPD-kernels over a set $S$ with the Kolmogorov-representations $(F, \mathfrak{i})$ and $(E,\mathfrak{j}),$ respectively. Suppose that $(K, \mathfrak{k})$ is the Kolmogorov-representation for $\mathfrak{L}\circ\mathfrak{K}.$ For $\sigma',\sigma\in S$ and $a\in\mathcal{A}$ we have 
		\begin{equation}\label{DPS1}
			\langle \mathfrak{i}(\sigma)\odot \mathfrak{j}(\sigma), a\mathfrak{i}(\sigma')\odot \mathfrak{j}(\sigma')\rangle=\langle \mathfrak{j}(\sigma),\langle \mathfrak{i}(\sigma), a\mathfrak{i}(\sigma')\rangle \mathfrak{j}(\sigma')\rangle=\langle \mathfrak{j}(\sigma),\mathfrak{K}^{\sigma,\sigma'}(a)\mathfrak{j}(\sigma')\rangle=(\mathfrak{L}\circ\mathfrak{K})^{\sigma,\sigma'}(a).
		\end{equation}
		This implies that $(F\odot E ,\mathfrak{i}\odot \mathfrak{j})$ is the Kolmogorov-representation (need not be minimal) for $\mathfrak{L}\circ\mathfrak{K}$ over $S.$ The mapping
		\begin{equation}\label{eq-iso}
			\mathfrak{k}(\sigma)\mapsto \mathfrak{i}(\sigma)\odot \mathfrak{j}(\sigma)\quad \quad \mbox{for} \quad \sigma\in S,
		\end{equation}
		extends as an isometry $\mu:K\to F\odot E.$ Consider $K$ as the submodule  $\overline{span}(\mathcal{A}\mathfrak{i}(\sigma)\odot\mathfrak{j}(\sigma)\mathcal{C})$ of $F\odot E.$
		Note that $\overline{span}(\mathcal{A}\mathfrak{i}(\sigma)\odot\mathcal{B}\mathfrak{j}(\sigma)\mathcal{C})=\overline{span}(\mathcal{A}\mathfrak{i}(\sigma)\mathcal{B}\odot\mathfrak{j}(\sigma)\mathcal{C})=\overline{span}(\mathcal{A}\mathfrak{i}(\sigma)\mathcal{B}\odot\mathcal{B}\mathfrak{j}(\sigma)\mathcal{C})=F\odot E.$
	\end{remark}
	
	Let $(F, \alpha)$ be an inclusion system with unit $\eta^{\sigma \odot}.$ Define a map $\mathfrak{K}_t:S\times S \to B(\mathcal{B})$ by
	$\mathfrak{K}_t^{\sigma,\sigma'}(b)=\langle \eta_t^{\sigma},b\eta_t^{\sigma'}\rangle$ for $b\in \mathcal{B},\sigma',\sigma\in S$ and $t\in \mathbb{T},$ then
	\begin{align*}
		(\mathfrak{K}_t \circ \mathfrak{K}_s)^{\sigma,\sigma'}(b)&=\mathfrak{K}_t^{\sigma,\sigma'}(\langle \eta_s^{\sigma},b\eta_s^{\sigma'}\rangle)=\langle \eta_t^{\sigma}, \langle \eta_s^{\sigma},b\eta_s^{\sigma'}\rangle\eta_t^{\sigma'}\rangle= \langle \eta_s^{\sigma}\odot \eta_t^{\sigma},b(\eta_s^{\sigma'}\odot \eta_t^{\sigma'})\rangle=\langle \eta_{t+s}^{\sigma},b\eta_{t+s}^{\sigma'}\rangle=\mathfrak{K}_{t+s}^{\sigma,\sigma'}(b).
	\end{align*}
	This shows that $(\mathfrak{K}_t)_{t\in \mathbb{T}}$ is QDS. Further, suppose that  $\eta^{\sigma \odot}$ is unital, then $(\mathfrak{K}_t)_{t\in \mathbb{T}}$ is QMS . The following remark in the converse direction:
	
	\begin{remark}\label{DS3}
		Suppose that $\mathfrak{K}=(\mathfrak{K}_s)_{s\in \mathbb{T}}$ is a QDS over $S$ on a unital $C^*$-algebra $\mathcal{B}$ with the (minimal)
		Kolmogorov-representation $(F_s, \eta_s)$ for $\mathfrak{K}_s$ over $S.$ (That is, $\eta_s^{\sigma}$ ($\eta_s:\sigma \mapsto\eta_s^{\sigma}$) is a cyclic vector in $F_s$ with $\mathfrak{K}_s^{\sigma,\sigma'}(b)=\langle \eta_s^{\sigma},b\eta_s^{\sigma'}\rangle$ for $s\in \mathbb{T},b\in \mathcal{B}$ and $\sigma',\sigma\in S,$ and $\eta_0^{\sigma}=1$, $F_0 = \mathcal{B}$).  Define a map $\alpha_{s,t}:F_{s+t}\to F_s \odot F_t$ by
		\begin{equation}
			\eta^{\sigma}_{t+s}\mapsto \eta_s^{\sigma}\odot \eta_t^{\sigma}.
		\end{equation}
		From Remark \ref{comp}, $\alpha_{s,t}$’s are bilinear isometries and it satisfies
		\begin{align*}
			(I_{F_r}\odot\alpha_{s,t})\alpha_{r,s+t}(\eta^{\sigma}_{r+s+t})&=(I_{F_r}\odot\alpha_{s,t})(\eta^{\sigma}_{r}\odot \eta^{\sigma}_{s+t})=\eta^{\sigma}_{r}\odot (\eta^{\sigma}_{s}\odot \eta^{\sigma}_{t})\\&=(\eta^{\sigma}_{r}\odot\eta^{\sigma}_{s})\odot \eta^{\sigma}_{t}
			=(\alpha_{r,s}\odot I_{F_t})(\eta^{\sigma}_{r+s}\odot \eta^{\sigma}_{t})=(\alpha_{r,s}\odot I_{F_t})\alpha_{r+s,t}(\eta^{\sigma}_{r+s+t})
		\end{align*}

		This implies that $(F= (F_t), \alpha = (\alpha_{s,t}))$ is an inclusion system with generating unit $\eta^{\sigma \odot }=(\eta_t^{\sigma}).$ Consider $F_{s+t}$ as the submodule  $\overline{span}(\mathcal{B}\eta_s^{\sigma}\odot\eta_t^{\sigma}\mathcal{B})$ of $F_s\odot F_t.$ Moreover, for $t\in \mathbb{T},$ let each $\mathfrak{K}_t$ be a normal CPD-kernel over a given set $S$ on a vNa $\mathcal{B},$ then the Kolmogorov-module
		$F_t$ is a von Neumann $\mathcal{B}$-$\mathcal{B}$-module, and hence $(F, \alpha)$ is an inclusion system with generating unit $\eta^{\sigma \odot }=(\eta_t^{\sigma}).$ 
	\end{remark}
	
Suppose that $\mathfrak{K}=(\mathfrak{K}_s)_{s\ge0}$ is a QDS over $S$ on $\mathcal{B},$ then we say that the inclusion system $(F, \alpha, \eta^{\sigma \odot })$ in Remark \ref{DS3} is the {\it inclusion system associated to $\mathfrak{K}$}. Sometimes we use $(F, \eta^{\sigma \odot })$ for $(F, \alpha, \eta^{\sigma \odot }).$

	\begin{definition}
		Suppose that $(F, \alpha)$ and $(E, \beta)$ are inclusion systems and $V = (V_s)_{s\in \mathbb{T}}$ is a family of bilinear adjointable operators $V_s : E_s\to F_s$ such that $\|V_s\|\le e^{tm}$ for some $m\in \mathbb{R}.$ We say that $V$ is a {\rm weak morphism} or {\rm morphism} if \begin{equation}
			V_{s+t}=\alpha_{s,t}^*(V_s\odot V_t)\beta_{s,t}
		\end{equation} for every $t,s\in \mathbb{T}$ and $\alpha_{s,t}$ is adjointable. It is called {\rm strong morphism} if \begin{equation}
			\alpha_{s,t}V_{s+t}=(V_s\odot V_t)\beta_{s,t}\quad \quad for \quad t,s\in \mathbb{T}.
		\end{equation} 
	\end{definition}
	
	\begin{example}
		Let $\mathcal{B}$ be a $C^*$-algebra. Suppose that $(a_s^{\sigma})_{s\ge 0}$ and $(b_s^{\sigma})_{s\ge 0},$ $\sigma\in S$ are semigroups on $\mathcal{B}$ (that is, $a_t^{\sigma}a_s^{\sigma}=a_{t+s}^{\sigma}$) and $(\mathfrak{K}_{s,i})_{s\ge 0},$ $i\in I_2$ is QDS over $S$ on $\mathcal{B}$ such that $\mathfrak{K}^{\sigma,\sigma'}_{s,1}(\cdot)-a_s^{\sigma}(\cdot)a_s^{\sigma' {^*}}$ and $\mathfrak{K}^{\sigma,\sigma'}_{s,2}(\cdot)-b_s^{\sigma}(\cdot)b_s^{\sigma' {^*}}$ are CPD-kernels over $S$ for $s\ge 0.$ Define a map $\tau_s:S\times S\to B(M_2(\mathcal{B}))$ by 	\begin{equation*}
			\tau_s^{\sigma,\sigma'} \begin{pmatrix}
				a_{11} &a_{12}\\a_{21}&a_{22}
			\end{pmatrix}
			= \begin{pmatrix}
				\mathfrak{K}^{\sigma,\sigma'}_{s,1}(a_{11}) & a_s^{\sigma} a_{12} b_s^{\sigma' {^*}}  \\\ b_s^{\sigma}a_{21}a_s^{\sigma' {^*}} &\mathfrak{K}^{\sigma,\sigma'}_{s,2}(a_{22})
			\end{pmatrix},
		\end{equation*}
		then $\tau_s$ is CPD-kernel over $S$ and
		\begin{equation*}
			\tau_s^{\sigma,\sigma'} \begin{pmatrix}
					a_{11} &a_{12}\\a_{21}&a_{22}
			\end{pmatrix}
			= \begin{pmatrix}
				a_s^{\sigma}& 0\\ 0& b_s^{\sigma}
			\end{pmatrix} \begin{pmatrix}
				a_{11} &a_{12}\\a_{21}&a_{22}
			\end{pmatrix} \begin{pmatrix}
				a_s^{\sigma' {^*}}& 0\\ 0& b_s^{\sigma' {^*}}
			\end{pmatrix} +\begin{pmatrix}
				\mathfrak{K}^{\sigma,\sigma'}_{s,1}(a_{11})-a_s^{\sigma}a_{11}a_s^{\sigma' {^*}}&0\\0& \mathfrak{K}^{\sigma,\sigma'}_{s,2}(a_{22})-b_s^{\sigma}a_{22}b_s^{\sigma' {^*}}
			\end{pmatrix}.
		\end{equation*} Thus $(\tau_s)_{s\ge 0}$ is block QDS.
	\end{example}
	
	\begin{lemma}
		Let $(F^i, \alpha^i, \eta_i^{\sigma \odot})$ be an inclusion systems associated to CPD-kernel semigroup $\mathfrak{K}_i=(\mathfrak{K}_{t,i})_{t\in \mathbb{T}},$  $i\in I_2$ over a given set $S$ on a unital $C^*$-algebra $\mathcal{B}.$ If $V = (V_t):F^2 \to F^1$ is a  contractive morphism, then there is a block CPD-kernel semigroup $\mathfrak{T}=(\mathfrak{T}_t)_{t\ge 0}$ over $S$ on $M_2(\mathcal{B})$ such that $\mathfrak{T}_t=\begin{pmatrix}
			\mathfrak{K}_{t,1} & \mathfrak{L}_t\\\mathfrak{L}_t^*& \mathfrak{K}_{t,2}
		\end{pmatrix}: S\times S\to  B( M_2(\mathcal{B}))$ and $\mathfrak{L}_t^{\sigma,\sigma'}(b)=\langle \eta_{t,1}^{\sigma},V_t b\eta_{t,2}^{\sigma'}\rangle$ for $b\in \mathcal{B}$ and $\sigma',\sigma\in S.$
	\end{lemma}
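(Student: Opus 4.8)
The plan is to construct $\mathfrak{T}_t$ separately for each $t$ from Lemma \ref{lem-single} and then to verify that the resulting family satisfies the quantum dynamical semigroup axioms, the only substantial point being the semigroup law for the off-diagonal corner. Fix $t\in\mathbb{T}$. Since $(F^i_t,\eta_{t,i})$ is the Kolmogorov-representation of the CPD-kernel $\mathfrak{K}_{t,i}$ for $i\in I_2$ and $V_t\colon F^2_t\to F^1_t$ is a bilinear adjointable contraction (the morphism being contractive), Lemma \ref{lem-single} applies at the instant $t$: putting $\mathfrak{L}_t^{\sigma,\sigma'}(b)=\langle \eta_{t,1}^{\sigma},V_t\,b\,\eta_{t,2}^{\sigma'}\rangle$, the block kernel
\[
\mathfrak{T}_t=\begin{pmatrix}\mathfrak{K}_{t,1}&\mathfrak{L}_t\\\mathfrak{L}_t^*&\mathfrak{K}_{t,2}\end{pmatrix}
\]
is CPD over $S$ on $M_2(\mathcal{B})$ and acts block-wise by construction. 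What remains is the composition law $\mathfrak{T}_t\circ\mathfrak{T}_s=\mathfrak{T}_{t+s}$, the normalization $\mathfrak{T}_0=\mathrm{id}$, and the contractivity $\mathfrak{T}_t(\mathbf 1)\le\mathbf 1$. The diagonal entries cause no difficulty, as $\mathfrak{K}_{t,i}\circ\mathfrak{K}_{s,i}=\mathfrak{K}_{t+s,i}$ holds because each $\mathfrak{K}_i$ is a QDS.

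The crucial step is to show $\mathfrak{L}_t^{\sigma,\sigma'}\circ\mathfrak{L}_s^{\sigma,\sigma'}=\mathfrak{L}_{t+s}^{\sigma,\sigma'}$, and I would obtain it by unwinding the weak-morphism relation $V_{s+t}=(\alpha^1_{s,t})^*(V_s\odot V_t)\alpha^2_{s,t}$. Beginning with $\mathfrak{L}_{s+t}^{\sigma,\sigma'}(b)=\langle \eta^{\sigma}_{s+t,1},V_{s+t}\,b\,\eta^{\sigma'}_{s+t,2}\rangle$, I would transfer $(\alpha^1_{s,t})^*$ onto the left entry using the isometry identity $\alpha^1_{s,t}\eta^{\sigma}_{s+t,1}=\eta^{\sigma}_{s,1}\odot\eta^{\sigma}_{t,1}$, rewrite $\alpha^2_{s,t}\,b\,\eta^{\sigma'}_{s+t,2}=(b\,\eta^{\sigma'}_{s,2})\odot\eta^{\sigma'}_{t,2}$ by bilinearity of $\alpha^2_{s,t}$, distribute $V_s\odot V_t$, and finally collapse the interior tensor product through $\langle x_1\odot y_1,x_2\odot y_2\rangle=\langle y_1,\langle x_1,x_2\rangle y_2\rangle$ together with the left linearity of the $V$'s. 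This yields
\[
\mathfrak{L}_{s+t}^{\sigma,\sigma'}(b)=\langle \eta^{\sigma}_{t,1},\,\langle \eta^{\sigma}_{s,1},V_s\,b\,\eta^{\sigma'}_{s,2}\rangle\,V_t\eta^{\sigma'}_{t,2}\rangle=\langle \eta^{\sigma}_{t,1},V_t\,\mathfrak{L}_s^{\sigma,\sigma'}(b)\,\eta^{\sigma'}_{t,2}\rangle=\mathfrak{L}_t^{\sigma,\sigma'}\big(\mathfrak{L}_s^{\sigma,\sigma'}(b)\big),
\]
which is exactly the desired corner identity. The $(2,1)$ corner then composes correctly by taking adjoints, since $(\mathfrak{L}_t^*)^{\sigma,\sigma'}(b)=(\mathfrak{L}_t^{\sigma',\sigma}(b^*))^*$ turns $\mathfrak{L}_t\circ\mathfrak{L}_s=\mathfrak{L}_{t+s}$ (with the roles of $\sigma,\sigma'$ interchanged) into $\mathfrak{L}_t^*\circ\mathfrak{L}_s^*=\mathfrak{L}_{t+s}^*$. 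Because $\mathfrak{T}_s$ preserves the block decomposition, these four entry-wise identities assemble into $\mathfrak{T}_t\circ\mathfrak{T}_s=\mathfrak{T}_{t+s}$.

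For the boundary conditions, $F^i_0=\mathcal{B}$ with $\eta^{\sigma}_{0,i}=\mathbf 1$ and $V_0=\mathrm{id}_{\mathcal{B}}$ give $\mathfrak{L}_0^{\sigma,\sigma'}(b)=\langle\mathbf 1,b\,\mathbf 1\rangle=b$, hence $\mathfrak{T}_0=\mathrm{id}_{M_2(\mathcal{B})}$; and evaluating at the unit, the off-diagonal corners vanish, so that
\[
\mathfrak{T}_t^{\sigma,\sigma}\begin{pmatrix}\mathbf 1&0\\0&\mathbf 1\end{pmatrix}=\begin{pmatrix}\mathfrak{K}^{\sigma,\sigma}_{t,1}(\mathbf 1)&0\\0&\mathfrak{K}^{\sigma,\sigma}_{t,2}(\mathbf 1)\end{pmatrix}\le\begin{pmatrix}\mathbf 1&0\\0&\mathbf 1\end{pmatrix}
\]
because each diagonal QDS is contractive. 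I expect the morphism-unwinding computation of the second paragraph to be the only genuine obstacle; the care needed there lies in keeping track of the left action of $b$, the bilinearity of the $V$'s, and the order of the factors, after which everything reduces to Lemma \ref{lem-single} and the known semigroup structure of the diagonal kernels.
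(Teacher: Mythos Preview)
Your proposal is correct and follows essentially the same approach as the paper: apply Lemma \ref{lem-single} at each fixed $t$ to get complete positive definiteness of $\mathfrak{T}_t$, and then verify the semigroup law block-wise, the only nontrivial corner being $\mathfrak{L}_t\circ\mathfrak{L}_s=\mathfrak{L}_{t+s}$, which you deduce from the morphism relation $V_{s+t}=(\alpha^1_{s,t})^*(V_s\odot V_t)\alpha^2_{s,t}$. The paper's proof is in fact terser on this last point (it simply states ``if $V$ is a morphism, then $(\mathfrak{L}_s)_{s\ge 0}$ is a semigroup''), whereas you spell out the tensor manipulation; one minor caveat is that your claim $V_0=\mathrm{id}_{\mathcal{B}}$ is not automatic from the morphism axioms, but the paper itself only verifies the composition law and does not address the $t=0$ normalization either.
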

	\begin{proof}
		Define block maps $\mathfrak{T}_t: S\times S\to  B( M_2(\mathcal{B}))$ by $\mathfrak{T}_t=\begin{pmatrix}
			\mathfrak{K}_{t,1} & \mathfrak{L}_t\\\mathfrak{L}_t^*& \mathfrak{K}_{t,2}
		\end{pmatrix},$ where $\mathfrak{L}_t^{\sigma,\sigma'}(b)=\langle \eta_{t,1}^{\sigma},V_t b\eta_{t,2}^{\sigma'}\rangle$ for $b\in \mathcal{B}$ and $\sigma',\sigma\in S.$
		Since $V_t:F_t^2 \to F_t^1$ is bilinear adjointable contraction, we get $\mathfrak{T}_t$ is block CPD-kernel over $S$ for every $t\ge 0$ (see Lemma \ref{lem-single}). Therefore, it is enough to prove that $\mathfrak{T}=(\mathfrak{T}_t)_{t\ge 0}$ is a semigroup over $S$ on $M_2(\mathcal{B}).$ Let $\begin{pmatrix}
			a_{11} &a_{12}\\a_{21}&a_{22}
		\end{pmatrix}\in M_2(\mathcal{B})$ and $\sigma',\sigma\in S$ we have 
		\begin{align*}
			(\mathfrak{T}_s \circ	\mathfrak{T}_t)^{\sigma,\sigma'} \begin{pmatrix}
				a_{11} &a_{12}\\a_{21}&a_{22}
			\end{pmatrix}= \mathfrak{T}_s^{\sigma,\sigma'}\begin{pmatrix}
				\mathfrak{K}^{\sigma,\sigma'}_{t,1}(a_{11}) & \mathfrak{L}^{\sigma,\sigma'}_t(a_{12})\\\mathfrak{L}_t^{{\sigma,\sigma'}^*}(a_{21})& \mathfrak{K}^{\sigma,\sigma'}_{t,2}(a_{22})
			\end{pmatrix}=\begin{pmatrix}
				\mathfrak{K}^{\sigma,\sigma'}_{s+t,1}(a_{11}) & \mathfrak{L}^{\sigma,\sigma'}_s(\mathfrak{L}^{\sigma,\sigma'}_t(a_{12}))\\\mathfrak{L}_s^{{\sigma,\sigma'}^*}(\mathfrak{L}_t^{{\sigma,\sigma'}^*}(a_{21}))& \mathfrak{K}^{\sigma,\sigma'}_{s+t,2}(a_{22})
			\end{pmatrix}.
		\end{align*}
		If $V$ is a morphism, then $(\mathfrak{L}_s)_{s\ge 0}$ is a semigroup, and hence $\mathfrak{T}=(\mathfrak{T}_s)_{s\ge 0}$ is a semigroup.
	\end{proof}
	
	Now we present the semigroup version of Theorem \ref{main-single}.
	\begin{theorem}
		Suppose that $\mathfrak{T}=(\mathfrak{T}_t)_{t\ge 0}$ is a semigroup of block normal CPD-kernels over a set $S$ on $M_2(\mathcal{B}),$ where $\mathcal{B}$ is a vNa and $\mathfrak{T}_t=\begin{pmatrix}
			\mathfrak{K}_{t,1} & \mathfrak{L}_t\\\mathfrak{L}_t^*& \mathfrak{K}_{t,2}
		\end{pmatrix}.$ Then there exist an inclusion systems $(F^i, \alpha^i, \xi_i^{\sigma \odot})$
		associated to $\mathfrak{K}_i=(\mathfrak{K}_{t,i})_{t\in \mathbb{T}},$ $i\in I_2$ and a contractive unique morphism $V = (V_t):F^2 \to F^1$ with $\mathfrak{L}_t^{\sigma,\sigma'}(b)=\langle \xi_{t,1}^{\sigma},V_t b\xi_{t,2}^{\sigma'}\rangle$ for every $\sigma',\sigma\in S$ and $b\in \mathcal{B}.$
	\end{theorem}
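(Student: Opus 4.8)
The plan is to reduce the statement to the single-time Theorem \ref{main-single} applied at each $t\ge 0$, build the two inclusion systems via Remark \ref{DS3}, and then use the uniqueness of the cross-sectional contractions to promote the resulting family $(V_t)$ into a morphism.

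First I would read off the diagonal and off-diagonal semigroups. Because $\mathfrak{T}$ acts block-wise and is a semigroup, the composition formula in the Lemma preceding the theorem forces the diagonal families $\mathfrak{K}_i=(\mathfrak{K}_{t,i})_{t\in\mathbb{T}}$ to be QDS of normal CPD-kernels on $\mathcal{B}$, and the corner family to satisfy $\mathfrak{L}_{s+t}=\mathfrak{L}_s\circ\mathfrak{L}_t$. Applying Remark \ref{DS3} to each $\mathfrak{K}_i$ then produces the inclusion system $(F^i,\alpha^i,\xi_i^{\sigma\odot})$, where $F_t^i$ is the minimal von Neumann Kolmogorov-module of $\mathfrak{K}_{t,i}$ with cyclic generating unit $\xi_{t,i}^{\sigma}$ and $\alpha_{s,t}^i(\xi_{s+t,i}^{\sigma})=\xi_{s,i}^{\sigma}\odot\xi_{t,i}^{\sigma}$. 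For each fixed $t$ I would apply Theorem \ref{main-single} to the single block CPD-kernel $\mathfrak{T}_t$, using precisely these minimal diagonal representations $(F_t^i,\xi_{t,i})$; this yields a bilinear adjointable contraction $V_t:F_t^2\to F_t^1$ with $\mathfrak{L}_t^{\sigma,\sigma'}(b)=\langle\xi_{t,1}^{\sigma},V_t b\xi_{t,2}^{\sigma'}\rangle$, and by the uniqueness remark after Theorem \ref{main-single} (the modules being minimal) this $V_t$ is the unique such contraction. Since each $V_t$ is a contraction, the norm bound $\|V_t\|\le e^{t\cdot 0}$ in the definition of a morphism is automatic.

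The remaining and central task is to verify the weak morphism identity $V_{s+t}=(\alpha_{s,t}^1)^*(V_s\odot V_t)\alpha_{s,t}^2$. I would set $W_{s+t}:=(\alpha_{s,t}^1)^*(V_s\odot V_t)\alpha_{s,t}^2$ and argue by uniqueness rather than by a direct computation of $V_{s+t}$. Since the $\alpha^i$ are bilinear adjointable isometries and $V_s\odot V_t$ is a bilinear adjointable contraction, $W_{s+t}:F_{s+t}^2\to F_{s+t}^1$ is again a bilinear adjointable contraction. Evaluating its matrix elements on the generating units and using $\alpha_{s,t}^2(b\xi_{s+t,2}^{\sigma'})=(b\xi_{s,2}^{\sigma'})\odot\xi_{t,2}^{\sigma'}$, the tensor inner-product rule $\langle x\odot y, x'\odot y'\rangle=\langle y,\langle x,x'\rangle y'\rangle$, the defining relation for $V_s$, and the left $\mathcal{B}$-linearity of $V_t$, I obtain $\langle\xi_{s+t,1}^{\sigma},W_{s+t}b\xi_{s+t,2}^{\sigma'}\rangle=\mathfrak{L}_t^{\sigma,\sigma'}(\mathfrak{L}_s^{\sigma,\sigma'}(b))=(\mathfrak{L}_t\circ\mathfrak{L}_s)^{\sigma,\sigma'}(b)=\mathfrak{L}_{s+t}^{\sigma,\sigma'}(b)$, where the last step is the semigroup property of $(\mathfrak{L}_t)$. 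Thus $W_{s+t}$ is a bilinear adjointable contraction realizing $\mathfrak{L}_{s+t}$ exactly as $V_{s+t}$ does, so the uniqueness from Theorem \ref{main-single} gives $W_{s+t}=V_{s+t}$, which is the desired identity. Uniqueness of the morphism $V=(V_t)$ as a whole is inherited from the cross-sectional uniqueness of each $V_t$.

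The hard part will be this last verification: one must track the left and right $\mathcal{B}$-actions carefully through the interior tensor product $F_s^i\odot F_t^i$, and it is exactly the semigroup property of $(\mathfrak{L}_t)$ (equivalently, of $\mathfrak{T}$ restricted to the off-diagonal corner) that makes the nested composition collapse to $\mathfrak{L}_{s+t}$. Invoking uniqueness converts the matrix-element identity directly into an operator identity; alternatively one could extend by bilinearity and by cyclicity of the generating units, but the uniqueness route avoids any separate density argument.
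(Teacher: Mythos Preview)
Your proposal is correct and the morphism verification is essentially the same computation as the paper's (both collapse $\langle \xi_{s+t,1}^{\sigma},\cdot\,b\xi_{s+t,2}^{\sigma'}\rangle$ using the semigroup law $\mathfrak{L}_t^{\sigma,\sigma'}\circ\mathfrak{L}_s^{\sigma,\sigma'}=\mathfrak{L}_{s+t}^{\sigma,\sigma'}$ and then conclude by cyclicity/uniqueness). The structural difference is in how the single-time contractions $V_t$ are obtained: the paper does not simply invoke Theorem~\ref{main-single} as a black box but rather re-runs its proof at the semigroup level. It first builds the inclusion system $(F,\alpha,\eta^{\sigma\odot})$ associated to the \emph{full} block semigroup $\mathfrak{T}$, then uses the projections $\mathbb{F}_{ii}$ and the quotient construction $E\mapsto E^{(\mathcal{B})}$ to extract the diagonal inclusion systems $(F^i,\alpha^i,\xi_i^{\sigma\odot})$ as concrete sub-objects of $F_t^{(\mathcal{B})}$, and defines $V_t=T_t^{1*}U_tT_t^2$ with the unitary $U_t[u]=[\mathbb{F}_{12}u]$ inherited from Theorem~\ref{main-single}. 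Your route---build the diagonal systems directly via Remark~\ref{DS3} and quote Theorem~\ref{main-single} together with its uniqueness remark pointwise in $t$---is more economical and avoids re-exposing the $M_2(\mathcal{B})$-module machinery; the paper's route has the advantage of showing explicitly that the two diagonal inclusion systems sit naturally inside the one associated to $\mathfrak{T}$, which is conceptually useful for the $E_0$-dilation discussion that follows.
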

	\begin{proof}
		We will prove it by applying the same concepts of Theorem \ref{main-single} to the semigroup version. Suppose that $(F= (F_t), \alpha = (\alpha_{t,s}), \eta^{\sigma\odot} = (\eta_t^{\sigma}))$ is an inclusion system associated to $\mathfrak{T}.$ Set $\mathbb{F}_{ij}:=\mathbf{1} \otimes {\bf e}_{ij}$  in $ \mathcal{B}\otimes M_2,$ where $\{{\bf e}_{ij}\}$'s denote the units of $M_2.$ Let $\hat{F}_t^i:=\mathbb{F}_{ii} F_t\subset F_t,i\in I_2,$ then $\hat{F}_t^i$’s are $M_2(\mathcal{B})$-submodules of $F_t$ with SOT closed and $F_t=\hat{F}_t^1\oplus \hat{F}_t^2$ for every $t\ge0.$ For $\sigma\in S,$ let $\eta_{t,i}^{\sigma}=\mathbb{F}_{ii}\eta_t^{\sigma} \mathbb{F}_{ii}\in \hat{F}_t^i, i\in I_2,$
		then (as in Theorem \ref{main-single})
		\begin{equation}\label{1}
			\eta_t^{\sigma} =\eta_{t,1}^{\sigma} +\eta_{t,2}^{\sigma}  \quad with \quad \langle \eta_{t,1}^{\sigma},\eta_{t,2}^{\sigma}\rangle=0 \quad and \quad \eta_{t,i}^{\sigma}=\mathbb{F}_{ii}\eta_t^{\sigma}=\eta_t^{\sigma} \mathbb{F}_{ii}
		\end{equation} for all $t\ge0,i\in I_2.$ Since $\alpha_{t,s}:F_{t+s}\to F_t \odot F_s$ are the canonical maps 
		$\eta_{t+s}^{\sigma} \mapsto \eta_t^{\sigma}\odot \eta_s^{\sigma},$ from Equation (\ref{1}) we get
		\begin{equation}\label{2}
			\alpha_{t,s}(\eta_{t+s,i}^{\sigma})=\alpha_{t,s}(\mathbb{F}_{ii}\eta_{t+s}^{\sigma}\mathbb{F}_{ii})=\mathbb{F}_{ii}\eta_{t}^{\sigma}\odot \eta_{s}^{\sigma}\mathbb{F}_{ii}=\eta_{t,i}^{\sigma}\odot \eta_{s,i}^{\sigma} \quad \quad\mbox{for} \quad \quad s,t\ge0,\sigma\in S,i\in I_2.
		\end{equation}
		Suppose that $ \hat{F}_t^{i(\mathcal{B})}$ is the von Neumann $\mathcal{B}$-modules for $i\in I_2$ (see Remark \ref{obs-new-module}) and $F_t^{(\mathcal{B})}$ is the von Neumann $\mathcal{B}$-$\mathcal{B}$-modules. Also, $\hat{F}_t^{i(\mathcal{B})}$ is a von Neumann $\mathcal{B}$-$\mathcal{B}$-module, where the left module action of $\mathcal{B}$ defined as
		\begin{equation*}
			b [u]:=[\begin{pmatrix}
				b &0\\0&b
			\end{pmatrix}u]\quad\text{for }  u\in \hat{F}_t^i,b\in\mathcal{B}, i\in I_2,
		\end{equation*} then $F_t^{(\mathcal{B})}\simeq \hat{F}_t^{1(\mathcal{B})}\oplus \hat{F}_t^{2(\mathcal{B})}$ for $t\ge 0$ (see Theorem \ref{main-single}). Let $\xi_{t,i}^{\sigma}=[\eta_{t,i}^{\sigma}]\in F_t^{i(\mathcal{B})},b\in \mathcal{B},i\in I_2$ and $\sigma',\sigma\in S,$ we have
		$$\langle \xi_{t,i}^{\sigma},b\xi_{t,i}^{\sigma'}\rangle=\sum_{s,r=1}^2\left\langle \mathbb{F}_{ii}\eta_{t}^{\sigma}, \begin{pmatrix} b&0\\0&b \end{pmatrix}\mathbb{F}_{ii}\eta_{t}^{\sigma'}\right\rangle_{r,s}=\sum_{s,r=1}^2\mathfrak{T}_t^{\sigma,\sigma'}\left(\mathbb{F}_{ii}\begin{pmatrix} b&0\\0&b\end{pmatrix}\mathbb{F}_{ii}\right)_{r,s}=\mathfrak{K}_{t,i}^{\sigma,\sigma'}(b).$$
		This implies that $(\hat{F}_t^{i(\mathcal{B})}, \xi_{t,i})$ is a (need not be minimal) Kolmogorov-representation for $\mathfrak{K}_{t,i},$  $i\in I_2$ over $S.$ Let $F_t^i=\overline{span}^s\mathcal{B} \xi_{t,i}^{\sigma} \mathcal{B}\subseteq \hat{F}_t^{i(\mathcal{B})}$ be the minimal GNS-module for $\mathfrak{K}_{t,i},$  $i\in I_2.$ Let $\alpha_{s,t}^i:F_{t+s}^i\to F_t^i \odot F_s^i$ be
		the canonical maps (see Remark \ref{DS3}) defined by\begin{equation*}
			\xi^{\sigma}_{t+s,i}\mapsto \xi_{t,i}^{\sigma}\odot \xi_{s,i}^{\sigma} \quad \quad for \quad s,t\ge 0,\sigma\in S,i\in I_2.
		\end{equation*}
		Therefore $(F^i = (F_t^i), \alpha^i = (\alpha_{t,s}^i), \xi_i^{\sigma \odot} = (\xi_{t,i}^{\sigma}))$ is the inclusion system associated to $\mathfrak{K}_{i},$  $i \in I_2.$
		( From the inclusion system associated to $\mathfrak{T}$ and Equation (\ref{2}), we get the inclusion systems associated to $\mathfrak{K}_{i}$’s.) For $t\ge 0,$ suppose that $T_t^i:F_t^i\to \hat{F}_t^{i(\mathcal{B})}$ is an inclusion maps, and  $U_t:\hat{F}_t^{2(\mathcal{B})}\to \hat{F}_t^{1(\mathcal{B})}$ is given by $$U_t[u]=[\mathbb{F}_{12}u] \quad \mbox{for all}\quad u\in\hat{F}_t^2,$$ 
		then $U_t$’s are bilinear unitaries and $T_t^i$’s are bilinear adjointable isometries (see Theorem \ref{main-single}). 
		Take  $V_t:=T_t^{1*}U_tT_t^2,$ then $V_t: F_t^2\to F_t^1$ is bilinear adjointable contraction. For $b\in \mathcal{B}$ and $\sigma',\sigma\in S$ we have
		\begin{align*}
			\langle \xi_{t,1}^{\sigma}, V_t b \xi_{t,2}^{\sigma'}\rangle &=\langle \xi_{t,1}^{\sigma}, T_t^{1*}U_tT_t^2 b \xi_{t,2}^{\sigma'} \rangle=\langle T_t^{1} [\eta_{t,1}^{\sigma}], U_tT_t^2 b [\eta_{t,2}^{\sigma'}] \rangle=\langle [\eta_{t,1}^{\sigma}], U_t [\begin{pmatrix}
				b &0\\0&b
			\end{pmatrix}\eta_{t,2}^{\sigma'}]\rangle\\
			&=\langle [\eta_{t,1}^{\sigma}], [\begin{pmatrix}
				0 &b\\0&0
			\end{pmatrix}\eta_{t,2}^{\sigma'}]\rangle 
			=\sum_{s,r=1}^2\left\langle \eta_{t,1}^{\sigma}, \begin{pmatrix}
				0 &b\\0&0
			\end{pmatrix}\eta_{t,2}^{\sigma'}\right\rangle_{r,s}\\
			&=\sum_{s,r=1}^2\left(\mathbb{F}_{11}\mathfrak{T}^{\sigma,\sigma'}_t \begin{pmatrix}
				0& b\\0&0
			\end{pmatrix}\mathbb{F}_{22}\right)_{r,s}=\sum_{s,r=1}^2\begin{pmatrix}
				0&\mathfrak{L}_t^{\sigma,\sigma'}(b)\\0& 0
			\end{pmatrix}_{r,s}=\mathfrak{L}_t^{\sigma,\sigma'}(b).
		\end{align*} It is enough to show that $V=(V_t)$ is a unique (weak) morphism from $(F^2,\alpha^2)$ to $(F^1,\alpha^1).$ For $a,b,c,d\in \mathcal{B}$ and $\sigma',\sigma\in S$ we get
		\begin{align*}
			\langle a\xi_{t+s,1}^{\sigma}b, V_{t+s} (c \xi_{t+s,2}^{\sigma'}d)\rangle &=b^*\mathfrak{L}_{t+s}^{\sigma,\sigma'}(a^*c)d=b^*\mathfrak{L}_{s}^{\sigma,\sigma'}(\mathfrak{L}_{t}^{\sigma,\sigma'}(a^*c))d=b^*\mathfrak{L}_{s}^{\sigma,\sigma'}(\langle \xi_{t,1}^{\sigma},a^*cV_t \xi_{t,2}^{\sigma'}\rangle)d\\&=b^* \langle \xi_{s,1}^{\sigma},V_s(\langle \xi_{t,1}^{\sigma},a^*cV_t \xi_{t,2}^{\sigma'}\rangle) \xi_{s,2}^{\sigma'}\rangle d=b^* \langle \xi_{s,1}^{\sigma},\langle \xi_{t,1}^{\sigma},a^*cV_t \xi_{t,2}^{\sigma'}\rangle V_s \xi_{s,2}^{\sigma'}\rangle d\\&=b^* \langle \xi_{t,1}^{\sigma}\odot\xi_{s,1}^{\sigma},a^*c(V_t \odot V_s )(\xi_{t,2}^{\sigma'} \odot\xi_{s,2}^{\sigma'})\rangle d =\langle \alpha_{t,s}^1(a\xi_{t+s,1}^{\sigma}b),(V_t \odot V_s )\alpha_{t,s}^2(c\xi_{t+s,2}^{\sigma'}d)\rangle \\&=\langle a\xi_{t+s,1}^{\sigma}b,\alpha_{t,s}^{1*}(V_t \odot V_s )\alpha_{t,s}^2(c\xi_{t+s,2}^{\sigma'}d)\rangle.
		\end{align*} This implies that $V= (V_t)_{t\ge0}$ is a morphism.
		For the uniqueness of $V$, let $V'= (V'_t)_{t\ge0}$ be the another morphism such that $\mathfrak{L}_t^{\sigma,\sigma'}(b)=\langle \xi_{t,1}^{\sigma},V'_t b\xi_{t,2}^{\sigma'}\rangle$ for $t\ge 0,b\in \mathcal{B},$ then
		\begin{align*}
			\langle a\xi_{t,1}^{\sigma}b,V_t(c\xi_{t,2}^{\sigma'}d) \rangle =b^* \mathfrak{L}_t^{\sigma,\sigma'}(a^*c)d=\langle a\xi_{t,1}^{\sigma}b,V'_t(c\xi_{t,2}^{\sigma'}d) \rangle
		\end{align*} for all $a,b,c,d\in \mathcal{B}$ and $\sigma',\sigma\in S.$ Thus $V_t=V'_t$ for $t\ge 0.$
	\end{proof}
	
	\begin{example}
		Suppose that $F$ is a von Neumann $M_2(\mathcal{B})$-$M_2(\mathcal{B})$-module, where $\mathcal{B}$ is a vNa. Let $\alpha_{\sigma}=\begin{pmatrix}
			\alpha_{{\sigma},1} &0\\0&\alpha_{\sigma,2}
		\end{pmatrix}\in M_2(\mathcal{B})$
		and $\eta^{\sigma}\in F$ such that $\eta^{\sigma}=\mathbb{F}_{11}\eta^{\sigma}\mathbb{F}_{11} + \mathbb{F}_{22}\eta^{\sigma}\mathbb{F}_{22},$ 
		where $\sigma\in S,\mathbb{F}_{ij}=\mathbf{1} \otimes {\bf e}_{ij}\in \mathcal{B}\otimes M_2.$ 
		Let $\zeta^{{\sigma}\odot}(\alpha_{\sigma},\eta^{\sigma}) = (\zeta_t(\alpha_{\sigma},\eta^{\sigma}))_{t\in \mathbb{R}_+}\in {\bf \Gamma}^{\odot}(F),$ the product system of time ordered Fock module over $F$ with the $m$-particle $(m > 0)$ sector component $\zeta_t^m$ of $\zeta_t(\alpha_{\sigma},\eta^{\sigma})\in {\bf \Gamma}_t(F)$ given by $$\zeta_t^m(t_m,\dots,t_1)=e^{(t-t_m)\alpha_{\sigma}}\eta^{\sigma}\odot e^{(t_m-t_{m-1})\alpha_{\sigma}}\eta^{\sigma}\odot \dots \odot e^{(t_2-t_1)\alpha_{\sigma}}\eta^{\sigma} e^{t_1\alpha_{\sigma}}$$ and $\zeta_t^0=e^{t\alpha_{\sigma}}$ for $\sigma\in S.$ Then by \cite[Theorem 3]{LS01}, $\zeta^{{\sigma}\odot}(\alpha_{\sigma},\eta^{\sigma})$ is a unit for
		${\bf \Gamma}^{\odot}(F).$ Define a map $\mathfrak{T}_t^{(\alpha,\eta)} :S\times S\to B(M_2(\mathcal{B}))$ by 
		$$\mathfrak{T}_t^{(\alpha_{\sigma},\eta^{\sigma})}(B)=\mathfrak{T}_t^{\sigma,\sigma'}(B)=\langle \zeta_t(\alpha_{\sigma},\eta^{\sigma}),B\zeta_t(\alpha_{\sigma'},\eta^{\sigma'}) \rangle \quad for \quad B\in M_2(\mathcal{B}).$$
		Then the maps $\mathfrak{T}:=(\mathfrak{T}_t)_{t\ge 0}$ is a CPD-semigroup of uniformly continuous maps over $S$ on $M_2(\mathcal{B})$ such that the bounded generator $$\mathfrak{L}^{\sigma,\sigma'}(B)=\mathfrak{L}^{(\alpha,\eta)}(B)=\langle \eta^{\sigma},B\eta^{\sigma'} \rangle +B\alpha_{\sigma'}+\alpha_{\sigma}^*B \quad for \quad B\in M_2(\mathcal{B}).$$ Let $\eta^{\sigma}_i=\mathbb{F}_{ii}\eta^{\sigma}\mathbb{F}_{ii},i\in I_2,$ then $\eta^{\sigma}=\eta^{\sigma}_1+\eta^{\sigma}_2$ and $\langle \eta^{\sigma}_1,\eta^{\sigma}_2\rangle=0.$ Define a map $\tau:S\times S\to B(M_2(\mathcal{B}))$ by $\tau^{\sigma,\sigma'}(B)=\langle \eta^{\sigma},B\eta^{\sigma'} \rangle, B\in M_2(\mathcal{B}),$ then $\tau$ is block CPD-kernel, say $\tau^{\sigma,\sigma'}=\begin{pmatrix}
			\tau_{11}^{\sigma,\sigma'} & \tau_{12}^{\sigma,\sigma'}\\\tau_{12}^{{\sigma,\sigma'}*}& \tau_{22}^{\sigma,\sigma'}
		\end{pmatrix}.$ Note that $(F^{(\mathcal{B})}, [\eta_i])$ is a Kolmogorov-representation for $\tau_{ii}$ over $S,$ $i\in I_2,$ where $F^{(\mathcal{B})}$ is the von Neumann $\mathcal{B}$-$\mathcal{B}$-module (see Remark \ref{obs-new-module}).
		
		Suppose that $F_i = \overline{span}^s \mathcal{B} [\eta_i^{\sigma}]\mathcal{B}\subseteq F^{(\mathcal{B})}$ is the (minimal) Kolmogorov-representation for $\tau_{ii}$ over $S$ $,i\in I_2,$ and suppose
		$V:F_2 \to F_1$ is the unique adjointable bilinear contraction with $\tau^{\sigma,\sigma'}_{12}(a) = \langle[\eta^{\sigma}_1], V a[\eta^{\sigma'}_2]\rangle$  (see Theorem \ref{main-single}). For $\sigma',\sigma\in S$ we have
		\begin{align*}
			\tau^{\sigma,\sigma'}(B)=\langle \eta^{\sigma},B\eta^{\sigma'} \rangle=\begin{pmatrix}
				\langle [\eta^{\sigma}_1],a_{11}[\eta_1^{\sigma'}] \rangle & \langle [\eta^{\sigma}_1],Va_{12}[\eta_2^{\sigma'}] \rangle\\\langle [\eta^{\sigma'}_2],V^*a_{21}[\eta_1^{\sigma}] \rangle& \langle [\eta^{\sigma}_2],a_{22}[\eta_2^{\sigma'}] \rangle
			\end{pmatrix} \quad for \quad B=\begin{pmatrix}
				a_{11} &a_{12}\\a_{21}&a_{22}
			\end{pmatrix}\in M_2(\mathcal{B}).
		\end{align*}Thus 
		\begin{align*}
			\mathfrak{L}^{\sigma,\sigma'}(B)&=\langle \eta^{\sigma},B\eta^{\sigma'} \rangle +B\alpha_{\sigma'}+\alpha_{\sigma}^*B\\&=\begin{pmatrix}
				\langle [\eta^{\sigma}_1],a_{11}[\eta_1^{\sigma'}] \rangle +a_{11}\alpha_{\sigma',1}+\alpha_{\sigma,1}^*a_{11}& \langle [\eta^{\sigma}_1],Va_{12}[\eta_2^{\sigma'}] \rangle +a_{12}\alpha_{\sigma',2}+\alpha_{\sigma,1}^*a_{12}\\\langle [\eta^{\sigma'}_2],V^*a_{21}[\eta_1^{\sigma}] \rangle +a_{21}\alpha_{\sigma',1}+\alpha_{\sigma,2}^*a_{21}& \langle [\eta^{\sigma}_2],a_{22}[\eta_2^{\sigma'}] \rangle  +a_{22}\alpha_{\sigma',2}+\alpha_{\sigma,2}^*a_{22} 
			\end{pmatrix}\\&=\begin{pmatrix}
				\mathfrak{L}_{11}^{(\alpha_1,[\eta_1])}(a_{11}) & \mathfrak{L}_{12}^{(\alpha_1,\alpha_2,[\eta_1],[\eta_2],V)}(a_{12})\\\mathfrak{L}_{21}^{(\alpha_1,\alpha_2,[\eta_1],[\eta_2],V)}(a_{21})& \mathfrak{L}_{22}^{(\alpha_2,[\eta_2])}(a_{22})
			\end{pmatrix},
		\end{align*} where $$\mathfrak{L}_{ii}^{\sigma,\sigma'}(b)=\mathfrak{L}_{ii}^{(\alpha_i,[\eta_i])}(b)=\langle [\eta^{\sigma}_i],b[\eta_i^{\sigma'}] \rangle +b\alpha_{\sigma',i}+\alpha_{\sigma,i}^*b,$$ 
	\begin{equation}\label{Azad11}
		\mathfrak{L}_{12}^{\sigma,\sigma'}(b)=\mathfrak{L}_{12}^{(\alpha_1,\alpha_2,[\eta_1],[\eta_2],V)}(b)=\langle [\eta^{\sigma}_1],Vb[\eta_2^{\sigma'}] \rangle +b\alpha_{\sigma',2}+\alpha_{\sigma,1}^*b \quad \mbox{and}
	\end{equation}
 $\mathfrak{L}_{21}^{\sigma,\sigma'}(b)={\mathfrak{L}_{12}^{\sigma,\sigma'}(b^*)}^*$ for every $ i\in I_2, b\in \mathcal{B}.$ Therefore, for $B=\begin{pmatrix}
			a_{11} &a_{12}\\a_{21}&a_{22}
		\end{pmatrix}\in M_2(\mathcal{B}),$ $$\mathfrak{T}_t^{\sigma,\sigma'}(B)=e^{t\mathfrak{L}^{\sigma,\sigma'}(B)}=\begin{pmatrix}
			e^{t\mathfrak{L}_{11}^{\sigma,\sigma'}(a_{11})} &e^{t\mathfrak{L}_{12}^{\sigma,\sigma'}(a_{12})}\\e^{t\mathfrak{L}_{21}^{\sigma,\sigma'}(a_{21})}&e^{t\mathfrak{L}_{22}^{\sigma,\sigma'}(a_{22})}
		\end{pmatrix}.$$
		Note that $(F^i = (F_t^i), \zeta_i^{\sigma} = (\zeta_{t,i}(\alpha_{\sigma, i},[\eta_i^{\sigma}])))$ is the inclusion system associated to $\mathfrak{K}_{i}=(e^{t\mathfrak{L}_{ii}^{\sigma,\sigma'}})_{t\ge 0}$ is a subsystem of ${\bf \Gamma}^{\odot}(F_i),i\in I_2$ over $F_i.$ Suppose that $\beta = (\beta_t)_{t\ge 0}$ is the contractive morphism from $(F^2,\zeta_2^{\sigma})$ to $(F^1,\zeta_1^{\sigma})$ such that
		\begin{equation}\label{PP1}
			e^{t\mathfrak{L}_{12}^{\sigma,\sigma'}}(b)=\langle \zeta_{t,1}^{\sigma}(\alpha_{\sigma, 1},[\eta_1^{\sigma}]),b \beta_t(\zeta_{t,2}^{\sigma'}(\alpha_{\sigma', 2},[\eta_2^{\sigma'}])) \rangle \quad for \quad b\in \mathcal{B},\sigma',\sigma\in S.
		\end{equation} For morphism maps from a unit to a unit we have
		\begin{equation}\label{PP2}
			\beta_t(\zeta_{t,2}^{\sigma}(\alpha_{\sigma, 2},[\eta_2^{\sigma}]))= \zeta_{t,1}^{\sigma}(\gamma_{\beta}(\alpha_{\sigma, 2},[\eta_2^{\sigma}]),\eta_{\beta}(\alpha_{\sigma, 2},[\eta_2^{\sigma}]))
		\end{equation} for some $\gamma_{\beta}, \eta_{\beta} : \mathcal{B} \times F_2 \to \mathcal{B} \times F_1.$ From Equations (\ref{PP1}) and (\ref{PP2}) we get $$e^{t\mathfrak{L}_{12}^{\sigma,\sigma'}}(b)=\langle \zeta_{t,1}^{\sigma}(\alpha_{\sigma, 1},[\eta_1^{\sigma}]),b \zeta_{t,1}^{\sigma'}(\gamma_{\beta}(\alpha_{\sigma', 2},[\eta_2^{\sigma'}]),\eta_{\beta}(\alpha_{\sigma', 2},[\eta_2^{\sigma'}])) \rangle$$ By differentiating Equation (\ref{PP1}) we obtain
		\begin{equation}\label{Azad1}
			\mathfrak{L}_{12}^{\sigma,\sigma'}(b)=\langle \eta^{\sigma}_1,b \eta_{\beta}(\alpha_{\sigma',2},[\eta_2^{\sigma'}])\rangle + b\gamma_{\beta}(\alpha_{\sigma', 2},[\eta_2^{\sigma'}]) + \alpha^*_{\sigma,1} b.
		\end{equation} From Equations (\ref{Azad11}) and (\ref{Azad1}), we get $\alpha_{\sigma',2}=\gamma_{\beta}(\alpha_{\sigma', 2},[\eta_2^{\sigma'}])$ and $V [\eta_2^{\sigma'}]=\eta_{\beta}(\alpha_{\sigma', 2},[\eta_2^{\sigma'}]),$ and hence  $$\beta_t(\zeta_{t,2}^{\sigma}(\alpha_{\sigma, 2},[\eta_2^{\sigma}])=\zeta_{t,1}^{\sigma}(\alpha_{\sigma,2},V[\eta_2^{\sigma}]).$$
	\end{example}
	
	\subsection{$E_0$-dilation of block quantum Markov semigroups.}

	Now we will discuss if we have a block QMS for completely positive definite kernels over a set $S$ on a given unital $C^*$-algebra, then the construction of $E_0$-dilation in \cite{BBLS04} is a block kernels semigroup. (We change some notations from \cite{BBLS04}: $\check{E_s}\leadsto F_s, 
	E_s\leadsto \mathcal{F}_s, E\leadsto \mathcal{F}$). 
	
	Suppose that $\mathcal{B}$ is a unital $C^*$-algebra with projections $p\in\mathcal{B}$ and $q=\mathbf{1}-p.$ Suppose that $\mathfrak{T}=(\mathfrak{T}_s)_{s\ge 0}$ is a block QMS for completely positive definite kernels over a set $S$ on $\mathcal{B}$ with respect to $p.$ Assume $(F = (F_s),  \eta^{\sigma\odot} = (\eta_s^{\sigma}))$ to be the inclusion system  associated to $\mathfrak{T}$ over $S,$ then by \cite[Section 4]{BBLS04},
	\[
	\begin{tikzcd}
		(F_s,\eta_s^{\sigma})\arrow[d, "\text{first inductive limit}"] \\
		(\mathcal{F}_s,\eta^s_{\sigma})\arrow["\text{second inductive limit}", d]\\
		(\mathcal{F},\eta^{\sigma})
	\end{tikzcd}\]
	there exist a $\mathcal{B}$-module $\mathcal{F}$ (using second inductive limit) such that $\mathcal{F} \odot \mathcal{F}_s \simeq \mathcal{F},$
	endomorphisms $\vartheta_s : B^a(\mathcal{F}) \to B^a(\mathcal{F})$ given by $\vartheta_s(b) = b \odot I_{\mathcal{F}_s}$ for $b\in B^a(\mathcal{F})$ with $(\vartheta_s)_{s\ge 0}$ is an $E_0$-dilation
	of $(\mathfrak{T}_s)_{s\ge 0}$ over $S$ and a representation $l_0 : \mathcal{B} \to  B^a(\mathcal{F})$ defined by $l_0(b)= |\eta^{\sigma}\rangle b \langle \eta^{\sigma}|$ for $b\in \mathcal{B}$. Note that the Markov property 
	\begin{equation}\label{PP3}
		l_0(\mathfrak{T}_s^{\sigma,\sigma}(b))=l_0({\bf 1})\vartheta_s({l_0(b)})l_0({\bf 1}) \quad \mbox{for}\quad b\in \mathcal{B},s\ge 0,\sigma\in S.
	\end{equation} This shows that $l_0({\bf 1})\vartheta_s({l_0({\bf 1})})l_0({\bf 1})=l_0(\mathfrak{T}_s^{\sigma,\sigma}({\bf 1}))=l_0({\bf 1}).$ It follows that $l_0({\bf 1})\le \vartheta_s(l_0({\bf 1})).$ Indeed, $l_0({\bf 1})$ is a projection. Therefore $(\vartheta_s(l_0({\bf 1})))_{s\ge 0}$ is a family of projections (increasing), and hence it converges
	in SOT. Suppose that $k_s : \mathcal{F}_s \to \mathcal{F}$ is the (isometries) canonical functions defined by $(\eta^s_{\sigma}\mapsto \eta^{\sigma} \odot \eta^s_{\sigma}),$ then \begin{equation}\label{PP6}
		\overline{span} \:k_s(\mathcal{F}_s)=\mathcal{F}.
	\end{equation} Therefore $\vartheta_s(l_0({\bf 1}))(\mathcal{F}_s) = \vartheta_s(l_0({\bf 1}))(\eta^{\sigma} \odot \mathcal{F}_s) = (|\eta^{\sigma}\rangle \langle \eta^{\sigma}|\odot I_{\mathcal{F}_s} )(\eta^{\sigma} \odot \mathcal{F}_s) = \eta^{\sigma}\odot \mathcal{F}_s,$ and hence $(\vartheta_s(l_0({\bf 1})))_{s\ge 0}$
	is converging in SOT to $I_{\mathcal{F}}.$

	Since $\mathfrak{T}$ is a unital block semigroup, we have $\mathfrak{T}^{\sigma,\sigma}_s(m)=m$ for $m = p$ or $q.$ From Equation (\ref{PP3}), we obtain \begin{equation}\label{PP4}
		l_0({\bf 1})\vartheta_s({l_0(m)})l_0({\bf 1})=l_0(\mathfrak{T}^{\sigma,\sigma}_s(m))=l_0(m) \quad for \quad  m=p,q.
	\end{equation}
	Since $l_0(p) + l_0(q)=l_0({\bf 1})$ and $l_0(q)l_0(p)=0=l_0(p)l_0(q).$ In Equation (\ref{PP4}) multiplying both sides by $l_0(m)$ we have
	$$l_0(m)\vartheta_s({l_0(m)})l_0(m)=l_0(m), \quad for \quad  m=p,q.$$
	Since $l_0(m)$ is a projection, $l_0(m) \le \vartheta_s(l_0(m))$ for $s\ge 0.$ Therefore $\vartheta_s(l_0(m))\le \vartheta_t(l_0(m))$ for
	$s\le t,$ and hence $(\vartheta_s(l_0(m)))_{s\ge 0}\in B^a(\mathcal{F})$ is a family of projections (increasing). Suppose that $(\vartheta_s(l_0(p)))_{s\ge 0}$ converges to $P,$ then 
	$(\vartheta_s(l_0(q)))_{t\ge 0}$ converge to $Q=I_{\mathcal{F}}-P.$
	And hence $PQ = 0,$
	\begin{equation}\label{PP5}
		\overline{span}^s \vartheta_s(l_0(q))(\mathcal{F})=Q(\mathcal{F})	\quad and \quad \overline{span}^s \vartheta_s(l_0(p))(\mathcal{F})=P(\mathcal{F}) .
	\end{equation} Therefore $ \mathcal{F}^{(1)} \oplus \mathcal{F}^{(2)}=\mathcal{F},$ where $\mathcal{F}^{(2)} = Q(\mathcal{F})$ and $\mathcal{F}^{(1)} = P (\mathcal{F}).$
	
	\begin{lemma}\label{PP7}
		$\vartheta_t(l_0(q))(\mathcal{F}_t)=Q(\mathcal{F}_t)$ and $\vartheta_t(l_0(p))(\mathcal{F}_t)=P(\mathcal{F}_t)$ for every $t\ge 0.$
	\end{lemma}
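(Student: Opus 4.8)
My plan is to prove the slightly stronger pointwise fact that $\vartheta_t(l_0(p))$ and $P$ agree \emph{as operators} on the subspace $\mathcal{F}_t$, and likewise $\vartheta_t(l_0(q))$ and $Q$ agree on $\mathcal{F}_t$; the equality of the images asserted in the lemma then follows at once. Throughout I identify $\mathcal{F}_t$ with $k_t(\mathcal{F}_t)=\eta^\sigma\odot \mathcal{F}_t\subseteq \mathcal{F}$, and I use the computation made just before the lemma, which shows that this subspace is precisely the range of the projection $\vartheta_t(l_0(\mathbf{1}))$, so that $\vartheta_t(l_0(\mathbf{1}))\xi=\xi$ whenever $\xi\in \mathcal{F}_t$.

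The ingredients, all available above, are: (i) the additivity $\vartheta_t(l_0(p))+\vartheta_t(l_0(q))=\vartheta_t(l_0(\mathbf{1}))$, which holds because $l_0$ is linear, $p+q=\mathbf{1}$, and $\vartheta_t$ is a homomorphism; (ii) the operator inequalities $\vartheta_t(l_0(p))\le P$ and $\vartheta_t(l_0(q))\le Q$, coming from the fact that $(\vartheta_s(l_0(p)))_{s\ge 0}$ and $(\vartheta_s(l_0(q)))_{s\ge 0}$ are increasing families of projections with suprema $P$ and $Q$; and (iii) the orthogonal splitting $\mathcal{F}=\mathcal{F}^{(1)}\oplus \mathcal{F}^{(2)}$, that is $P+Q=I_{\mathcal{F}}$ with $PQ=QP=0$.

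For the main step I would fix $\xi\in \mathcal{F}_t$ and combine (i) with $\vartheta_t(l_0(\mathbf{1}))\xi=\xi$ to write
\[
\xi=\vartheta_t(l_0(p))\xi+\vartheta_t(l_0(q))\xi .
\]
By (ii) we have $P\vartheta_t(l_0(p))=\vartheta_t(l_0(p))$, while by (ii) and (iii) we have $P\vartheta_t(l_0(q))=PQ\vartheta_t(l_0(q))=0$. Applying $P$ to the displayed identity therefore yields $P\xi=\vartheta_t(l_0(p))\xi$, and symmetrically applying $Q$ yields $Q\xi=\vartheta_t(l_0(q))\xi$. Since $\xi\in \mathcal{F}_t$ was arbitrary, the two pairs of operators agree on $\mathcal{F}_t$, whence $\vartheta_t(l_0(p))(\mathcal{F}_t)=P(\mathcal{F}_t)$ and $\vartheta_t(l_0(q))(\mathcal{F}_t)=Q(\mathcal{F}_t)$.

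The one point that genuinely needs care---and the only place the dilation construction enters---is the identification $\mathcal{F}_t=\ran \vartheta_t(l_0(\mathbf{1}))$; granting this, the rest is pure orthogonality bookkeeping in (i)--(iii) and uses nothing about the two inductive limits. I should also confirm that $P$ and $Q$ are genuine self-adjoint projections, which holds because they are SOT-limits of the increasing self-adjoint projections $\vartheta_s(l_0(p))$ and $\vartheta_s(l_0(q))$. A more computational alternative would be to verify $Q\eta^\sigma=\eta^\sigma q=l_0(q)\eta^\sigma$ directly (via $\vartheta_s(l_0(q))\eta^\sigma=\eta^\sigma q$ for every $s$, which rests on the Markov identity $\mathfrak{T}^{\sigma,\sigma}_s(q)=q$) and then invoke $Q=\vartheta_t(Q)$ to pass to $\mathcal{F}_t$; but the projection-ordering argument above is shorter and I would prefer it.
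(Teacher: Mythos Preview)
Your argument is correct and is genuinely different from the paper's. The paper proceeds by a direct computation: it first uses the block Markov property $\mathfrak{T}^{\sigma,\sigma}_t(m)=m$ for $m\in\{p,q\}$ to deduce the commutation $m\eta^t_\sigma=\eta^t_\sigma m$, and then checks, for each $y\in\mathcal{F}_t$ and each $s\ge t$, that $\vartheta_s(l_0(m))(y)=\vartheta_t(l_0(m))(y)$ by writing everything out in the product picture $\mathcal{F}\simeq\mathcal{F}\odot\mathcal{F}_s$. Thus the increasing net $\big(\vartheta_s(l_0(m))\big)_{s\ge t}$ is literally constant on $\mathcal{F}_t$, and the limit $P$ (respectively $Q$) agrees with $\vartheta_t(l_0(p))$ (respectively $\vartheta_t(l_0(q))$) there.

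Your route bypasses this explicit stabilization. You use only that $\vartheta_t(l_0(\mathbf{1}))$ fixes $\mathcal{F}_t$ (already computed in the text), the order relations $\vartheta_t(l_0(p))\le P$, $\vartheta_t(l_0(q))\le Q$, and the complementarity $P+Q=I_{\mathcal{F}}$; the conclusion then drops out by applying $P$ and $Q$ to $\xi=\vartheta_t(l_0(p))\xi+\vartheta_t(l_0(q))\xi$. This is shorter and makes no further use of the block structure of $\mathfrak{T}$ beyond what was already absorbed into the facts $l_0(m)\le\vartheta_s(l_0(m))$ and $P+Q=I_{\mathcal{F}}$. The paper's computation, on the other hand, yields the slightly stronger statement that the net is eventually constant on $\mathcal{F}_t$, and it exhibits the concrete commutation $m\eta^t_\sigma=\eta^t_\sigma m$, which is of independent interest; but for the lemma as stated your projection--lattice argument suffices. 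One small wording point: you only need that $\mathcal{F}_t$ is \emph{contained in} the range of $\vartheta_t(l_0(\mathbf{1}))$ (equivalently, that this projection fixes $\mathcal{F}_t$), not the reverse inclusion, so you could phrase that hypothesis accordingly.
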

	\begin{proof}
		Fix $t\ge 0,$ let $\eta^{t}_{\sigma}\in \mathcal{F}_t,\sigma\in S$ and $s\ge t,$ it is sufficient to show for $m = q,p,$ that $\vartheta_s(l_0(m))(\eta^{t}_{\sigma}) = \vartheta_t(l_0(m))(\eta^{t}_{\sigma}).$ Since $\langle \eta^t_{\sigma},m\eta^t_{\sigma}\rangle=\mathfrak{T}^{\sigma,\sigma}_t(m)=m$ for $m = p,q$ and $\sigma\in S$ we have 
		\begin{align*}
			\|p\eta^t_{\sigma}- p\eta^t_{\sigma}p \|^2=\| p\eta^t_{\sigma}q \|^2=\| \langle p\eta^t_{\sigma}q,p\eta^t_{\sigma}q\rangle \|=\| q\langle \eta^t_{\sigma},p\eta^t_{\sigma}\rangle q\|=\|q \mathfrak{T}^{\sigma,\sigma'}_t(p)q \|=\|qpq\|=0,
		\end{align*}similarly, $\|\eta^t_{\sigma}p- p\eta^t_{\sigma}p \|^2=0.$ This implies that $\eta_t^{\sigma}p=p\eta_t^{\sigma}=p\eta_t^{\sigma}p.$ On the other side $\eta^t_{\sigma}q=q\eta^t_{\sigma}=q\eta^t_{\sigma}q.$ Let $y\in \mathcal{F}_t, m=p,q$ and $s\ge t,$ then $\eta^{s-t}_{\sigma}\odot y\in \mathcal{F}_s$ and 
		\begin{align*}
			\vartheta_s(l_0(m))(y)&=(|\eta^{\sigma}\rangle m\langle \eta^{\sigma}|\odot I_{\mathcal{F}_s} )(\eta^{\sigma} \odot \eta^{s-t}_{\sigma}\odot y)=\eta^{\sigma} \odot m\eta^{s-t}_{\sigma}\odot y=\eta^{\sigma} \odot \eta^{s-t}_{\sigma}m \odot y\\&=\eta^{\sigma} \odot \eta^{s-t}_{\sigma}\odot my=\eta^{\sigma} \odot m y=\vartheta_t(l_0(m))(y).
		\end{align*}
	\end{proof}
	From \cite[Theorem 4.4.3]{BBLS04}, $\mathcal{F} \odot \mathcal{F}_s \simeq \mathcal{F}$ for $s \ge 0.$
	Now we present a similar result for $\mathcal{F}^{(i)}$’s.
	\begin{lemma}
		$\mathcal{F}^{(i)} \odot \mathcal{F}_s \simeq \mathcal{F}^{(i)}$ for every $i\in I_2$ and $s \ge 0.$
	\end{lemma}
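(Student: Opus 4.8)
The plan is to transport the isomorphism $\mathcal{F}\odot\mathcal{F}_s\simeq\mathcal{F}$ of \cite[Theorem 4.4.3]{BBLS04} down to the two summands $\mathcal{F}^{(1)}=P(\mathcal{F})$ and $\mathcal{F}^{(2)}=Q(\mathcal{F})$. Write $w_s:\mathcal{F}\odot\mathcal{F}_s\to\mathcal{F}$ for the bilinear unitary furnished by that theorem. By the definition of the dilation, $\vartheta_s(b)=b\odot I_{\mathcal{F}_s}$ is implemented through this unitary, that is $\vartheta_s(b)=w_s(b\odot I_{\mathcal{F}_s})w_s^*$ for every $b\in B^a(\mathcal{F})$. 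The whole argument then reduces to showing that the projections $P$ and $Q$ are fixed by each $\vartheta_s$; once this is known, restricting $w_s$ to the ranges of the relevant projections produces the desired isomorphisms.

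First I would establish that $\vartheta_s(P)=P$ and $\vartheta_s(Q)=Q$ for every $s\ge 0.$ Recall from the discussion preceding Equation \eqref{PP5} that $P$ and $Q$ are the SOT-limits of the increasing families of projections $(\vartheta_t(l_0(p)))_{t\ge 0}$ and $(\vartheta_t(l_0(q)))_{t\ge 0}.$ Using the semigroup law $\vartheta_s\circ\vartheta_t=\vartheta_{s+t}$ together with the normality of the endomorphism $\vartheta_s,$ I would interchange $\vartheta_s$ with the limit:
\[
\vartheta_s(P)=\vartheta_s\Big(\text{SOT-}\lim_{t}\vartheta_t(l_0(p))\Big)=\text{SOT-}\lim_{t}\vartheta_{s+t}(l_0(p))=P,
\]
and likewise $\vartheta_s(Q)=Q.$ Lemma \ref{PP7} is exactly the vector-level incarnation of this fixed-point property, giving $P|_{\mathcal{F}_t}=\vartheta_t(l_0(p))|_{\mathcal{F}_t}$ on each $\mathcal{F}_t,$ and can be used to double-check (or, if one prefers a limit-free route, to bypass) the interchange step above.

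With this in hand, the identity $\vartheta_s(P)=P$ reads $w_s(P\odot I_{\mathcal{F}_s})w_s^*=P,$ i.e. $w_s(P\odot I_{\mathcal{F}_s})=Pw_s,$ so the bilinear unitary $w_s$ intertwines the projection $P\odot I_{\mathcal{F}_s}$ on $\mathcal{F}\odot\mathcal{F}_s$ with the projection $P$ on $\mathcal{F}.$ A unitary intertwining two projections restricts to a unitary between their ranges; here $\ran(P\odot I_{\mathcal{F}_s})=\mathcal{F}^{(1)}\odot\mathcal{F}_s$ and $\ran(P)=\mathcal{F}^{(1)},$ so $w_s$ cuts down to a bilinear unitary $\mathcal{F}^{(1)}\odot\mathcal{F}_s\to\mathcal{F}^{(1)},$ giving $\mathcal{F}^{(1)}\odot\mathcal{F}_s\simeq\mathcal{F}^{(1)}.$ The identical argument applied to $Q$ yields $\mathcal{F}^{(2)}\odot\mathcal{F}_s\simeq\mathcal{F}^{(2)},$ completing both cases $i\in I_2.$

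The main obstacle I anticipate is bookkeeping around the tensor product rather than any deep difficulty, and it concentrates in two places. The first is making $\vartheta_s(P)=P$ rigorous, which forces one to pin down the normality of $\vartheta_s$ and the legitimacy of pulling it through the SOT-limit of the increasing net $(\vartheta_t(l_0(p)))_{t\ge 0}$; this is precisely where Lemma \ref{PP7} earns its place. The second is checking that $P$ and $Q$ commute with the left $\mathcal{B}$-action, so that $\mathcal{F}^{(1)}$ and $\mathcal{F}^{(2)}$ are genuine $\mathcal{B}$-$\mathcal{B}$-submodules; this is needed both for the interior tensor products $\mathcal{F}^{(i)}\odot\mathcal{F}_s$ to make sense and for the identification $\ran(P\odot I_{\mathcal{F}_s})=\mathcal{F}^{(1)}\odot\mathcal{F}_s$ together with the bilinearity of the restricted map.
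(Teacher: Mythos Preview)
Your proposal is correct and follows essentially the same approach as the paper: both restrict the global unitary $v_s:\mathcal{F}\odot\mathcal{F}_s\to\mathcal{F}$ to the summands $\mathcal{F}^{(i)}\odot\mathcal{F}_s$ and show that it lands in $\mathcal{F}^{(i)}$. The only difference is packaging: the paper verifies $v_s(\mathcal{F}^{(i)}\odot\mathcal{F}_s)\subseteq\mathcal{F}^{(i)}$ by a direct vector computation (using Lemma~\ref{PP7} and the explicit formula $v_s(k_t(y_t)\odot x_s)=k_{t+s}(y_t\odot x_s)$ to check $v_s(\vartheta_t(l_0(m))k_t(y_t)\odot x_s)=\vartheta_{t+s}(l_0(m))k_{t+s}(y_t\odot x_s)$), whereas you reach the same conclusion by the cleaner abstract observation $\vartheta_s(P)=P$, which immediately gives the intertwining relation $w_s(P\odot I_{\mathcal{F}_s})=Pw_s$.
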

	\begin{proof}
		Suppose that $k_s : \mathcal{F}_s \to \mathcal{F}$ is the (isometries) canonical maps. Define a unitary $v_s : \mathcal{F}\odot \mathcal{F}_s \to \mathcal{F}$ (see \cite[Theorem 4.4.3]{BBLS04}) by $$v_s(k_t(y_t)\odot x_s)=k_{t+s}(y_t\odot x_s) \quad \mbox{for all}\quad y_t\in \mathcal{F}_t,x_s\in \mathcal{F}_s,$$ then $\mathcal{F} \odot \mathcal{F}_s \simeq \mathcal{F}.$ Since $ \mathcal{F}^{(1)} \oplus \mathcal{F}^{(2)}=\mathcal{F}$ we get $ \mathcal{F}^{(1)} \odot \mathcal{F}_s \oplus \mathcal{F}^{(2)} \odot \mathcal{F}_s \simeq \mathcal{F}.$ Now, we want to show the restriction map of the unitary $v_s|_{\mathcal{F}^{(i)} \odot \mathcal{F}_s}:\mathcal{F}^{(i)} \odot \mathcal{F}_s\to \mathcal{F}^{(i)}$ is a unitary. It is sufficient to show $v_s(\mathcal{F}^{(i)} \odot \mathcal{F}_s) \subseteq \mathcal{F}^{(i)}.$ From Equations (\ref{PP6}), (\ref{PP5}) and Lemma \ref{PP7}, it is enough to show $v_s(\vartheta_t(l_0(q))k_t(\mathcal{F}_t)\odot \mathcal{F}_s) \subseteq \mathcal{F}^{(2)}$ and $v_s(\vartheta_t(l_0(p))k_t(\mathcal{F}_t)\odot \mathcal{F}_s) \subseteq \mathcal{F}^{(1)}.$
		Let $y_t\in \mathcal{F}_t,x_s\in \mathcal{F}_s$ and $m = p$ or $q$ we have
		\begin{align*}
			&v_s(\vartheta_t(l_0(m))k_t(y_t)\odot x_s)=v_s((\eta^{\sigma} m\odot y_t)\odot x_s)=v_s((\eta^{\sigma} \odot my_t)\odot x_s)=v_s(k_t(m y_t)\odot x_s)\\&=k_{t+s}(m y_t\odot x_s)=\eta^{\sigma}\odot my_t\odot x_s=\eta^{\sigma}m\odot y_t\odot x_s=\vartheta_{t+s}(l_0(m))k_{t+s}(y_t\odot x_s),
		\end{align*} and hence $v_s(\vartheta_t(l_0(m))k_t(y_t)\odot x_s)\in \mathcal{F}^{(1)}$ for $m= p$ and $v_s(\vartheta_t(l_0(m))k_t(y_t)\odot x_s)\in\mathcal{F}^{(2)}$ for $m=q.$ 
	\end{proof}
	
	\begin{theorem}
		The $E_0$-dilation $\vartheta=(\vartheta_s)_{s\ge 0}$
		of $\mathfrak{T}$ over $S$ defined as above is a block kernels semigroup  with respect to projection $P.$
	\end{theorem}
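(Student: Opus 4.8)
The plan is to verify that each endomorphism $\vartheta_s$ respects the block decomposition of $B^a(\mathcal{F})$ determined by the projection $P$; since $(\vartheta_s)_{s\ge 0}$ is already a semigroup of endomorphisms, this is precisely the assertion that it is a block kernels semigroup with respect to $P$. Everything hinges on the single identity
\begin{equation*}
	\vartheta_s(P)=P\quad\text{and}\quad\vartheta_s(Q)=Q\quad\text{for all }s\ge 0,
\end{equation*}
and once this is in hand the block structure drops out from multiplicativity of $\vartheta_s$.

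First I would establish $\vartheta_s(P)=P$. Recall that $P=\slim_{t\to\infty}\vartheta_t(l_0(p))$ is the strong limit of the increasing family of projections $(\vartheta_t(l_0(p)))_{t\ge 0}$ constructed above. Since $\mathcal{F}$ is a von Neumann module, $B^a(\mathcal{F})$ is a von Neumann subalgebra of $B(\mathcal{F}\odot\mathcal{H})$ and $\vartheta_s(\cdot)=\cdot\odot I_{\mathcal{F}_s}$ is a normal (hence SOT-continuous on bounded sets) unital $*$-endomorphism. I may therefore pull $\vartheta_s$ through the limit and invoke the semigroup property $\vartheta_s\circ\vartheta_t=\vartheta_{s+t}$ to obtain
\begin{equation*}
	\vartheta_s(P)=\slim_{t\to\infty}\vartheta_s\bigl(\vartheta_t(l_0(p))\bigr)=\slim_{t\to\infty}\vartheta_{s+t}(l_0(p))=P,
\end{equation*}
the last equality holding because $s+t\to\infty$ as $t\to\infty$. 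Then $\vartheta_s(Q)=\vartheta_s(I_{\mathcal{F}}-P)=I_{\mathcal{F}}-P=Q$ by unitality of $\vartheta_s$.

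Next, using that $\vartheta_s$ is a $*$-homomorphism together with $\vartheta_s(P)=P$ and $\vartheta_s(Q)=Q$, I would check that $\vartheta_s$ carries each block into the corresponding block: for $T\in B^a(\mathcal{F})$,
\begin{align*}
	\vartheta_s(PTP)&=\vartheta_s(P)\vartheta_s(T)\vartheta_s(P)=P\vartheta_s(T)P\in P B^a(\mathcal{F})P,\\
	\vartheta_s(PTQ)&=P\vartheta_s(T)Q\in P B^a(\mathcal{F})Q,
\end{align*}
and likewise $\vartheta_s(QTP)=Q\vartheta_s(T)P$ and $\vartheta_s(QTQ)=Q\vartheta_s(T)Q$. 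Decomposing $T=PTP+PTQ+QTP+QTQ$ relative to $\mathcal{F}=\mathcal{F}^{(1)}\oplus\mathcal{F}^{(2)}=P(\mathcal{F})\oplus Q(\mathcal{F})$ and applying $\vartheta_s$ then gives
\begin{equation*}
	\vartheta_s(T)=\begin{pmatrix}\vartheta_s(PTP)&\vartheta_s(PTQ)\\\vartheta_s(QTP)&\vartheta_s(QTQ)\end{pmatrix}\in\begin{pmatrix}P B^a(\mathcal{F})P & P B^a(\mathcal{F})Q\\ Q B^a(\mathcal{F})P & Q B^a(\mathcal{F})Q\end{pmatrix},
\end{equation*}
which is exactly the defining property of a block kernel with respect to $P$; as the $\vartheta_s$ compose as a semigroup, $(\vartheta_s)_{s\ge 0}$ is a block kernels semigroup.

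The only genuine obstacle I anticipate is justifying the interchange of $\vartheta_s$ with the strong limit in the first step. This rests on $\mathcal{F}$ being a von Neumann module so that $\vartheta_s=\cdot\odot I_{\mathcal{F}_s}$ is normal, and on the fact that $(\vartheta_t(l_0(p)))_t$ is a \emph{bounded increasing} net of projections, for which the limit interchange against an SOT-continuous-on-bounded-sets endomorphism is legitimate. The remainder is routine $C^*$-algebraic bookkeeping.
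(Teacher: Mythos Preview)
Your argument is correct and is a genuinely different route from the paper's. The paper does not invoke normality of $\vartheta_s$ at all; instead it appeals to the immediately preceding lemma $\mathcal{F}^{(i)}\odot\mathcal{F}_s\simeq\mathcal{F}^{(i)}$ (via the restriction of the unitary $v_s$) and argues directly on the module level: writing $B^a(\mathcal{F})$ as the $2\times 2$ block algebra coming from $\mathcal{F}=\mathcal{F}^{(1)}\oplus\mathcal{F}^{(2)}$, for $b\in B^a(\mathcal{F}^{(i)},\mathcal{F}^{(j)})$ one has $\vartheta_s(b)=b\odot I_{\mathcal{F}_s}\in B^a(\mathcal{F}^{(i)}\odot\mathcal{F}_s,\mathcal{F}^{(j)}\odot\mathcal{F}_s)=B^a(\mathcal{F}^{(i)},\mathcal{F}^{(j)})$, and the proof is finished in one line. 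Your approach instead proves the single identity $\vartheta_s(P)=P$ from the semigroup law together with SOT-continuity of the ampliation, and then lets multiplicativity do the rest; this bypasses the module-isomorphism lemma entirely. The trade-off is that the paper's argument is purely algebraic once that lemma is available, whereas yours needs the topological interchange you flagged. Note that in this subsection the paper only assumes $\mathcal{B}$ is a unital $C^*$-algebra, so $\mathcal{F}$ is not \emph{a priori} a von Neumann module; however, the surrounding text already uses SOT convergence of increasing projections freely, and the ampliation $a\mapsto a\odot I_{\mathcal{F}_s}$ is spatially implemented under the identification $\mathcal{F}\simeq\mathcal{F}\odot\mathcal{F}_s$, so your interchange is justified in the representation the paper is implicitly using. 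Either way your conclusion $\vartheta_s(P)=P$ is equivalent to what the paper's lemma yields, just obtained differently.
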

	\begin{proof}
		Since $\mathcal{F}^{(1)} \oplus \mathcal{F}^{(2)}=\mathcal{F}$ we get
		$$B^a(\mathcal{F})=\begin{pmatrix}
			B^a(\mathcal{F}^{(1)}) &B^a(\mathcal{F}^{(2)},\mathcal{F}^{(1)})\\B^a(\mathcal{F}^{(1)},\mathcal{F}^{(2)})&B^a(\mathcal{F}^{(2)})
		\end{pmatrix}.$$
		Let $b\in B^a(\mathcal{F}^{(i)},\mathcal{F}^{(j)})$ and $i,j\in I_2$ we have 
		\begin{align*}
			\vartheta_s(b)=b\odot I_{\mathcal{F}_s}\in B^a(\mathcal{F}^{(i)}\odot \mathcal{F}_s,\mathcal{F}^{(j)}\odot \mathcal{F}_s)=B^a(\mathcal{F}^{(i)},\mathcal{F}^{(j)}),
		\end{align*} and hence $\vartheta_s$ acts block-wise.
	\end{proof}
	
	\section{Lifting of morphisms}
	Let $\mathfrak{L}=(\mathfrak{L}_t)_{t\in \mathbb{T}}$ and $\mathfrak{K}=(\mathfrak{K}_t)_{t\in \mathbb{T}}$ be two normal QDS over $S$ on a vNa $\mathcal{B}.$ Suppose that $(F, \alpha)$ and $(E, \beta)$  are two inclusion systems corresponding $\mathfrak{L}=(\mathfrak{L}_t)_{t\in \mathbb{T}}$ and $\mathfrak{K}=(\mathfrak{K}_t)_{t\in \mathbb{T}},$ respectively.  We establish in this section that any morphism between $(F, \alpha)$ and $(E, \beta)$ can be lifted as a unique morphism between the product systems generated by $(F, \alpha)$ and $(E, \beta),$ respectively. We will use the following results and notations from \cite{BBLS04} and \cite{BK20}. For $s\ge 0,$ define \begin{equation*}
		\mathbb{J}_s:=\{{\mathfrak s}=(s_n,\dots,s_1)\in \mathbb{T}^n: s_i\ge 0, |{\mathfrak s}|=s, n\in \mathbb{N}\}.
	\end{equation*} Let ${\mathfrak t}=(t_m,\dots,t_1)\in \mathbb{J}_t$ and ${\mathfrak s}=(s_n,\dots,s_1)\in \mathbb{J}_s$ the {\it joint tuple} ${\mathfrak t}\smile{\mathfrak s}\in \mathbb{J}_{t+s}$ defined as $${\mathfrak t}\smile{\mathfrak s}:=((t_m,\dots,t_1),(s_n,\dots,s_1))=(t_m,\dots,t_1,s_n,\dots,s_1).$$
	We equip $\mathbb{J}_s$ with a {\it partial order} by saying ${\mathfrak s}\ge {\mathfrak t} = (t_m,\dots,t_1),$ if for every $j \:(1 \le j \le m)$
	there exist ${\mathfrak t}_j\in \mathbb{J}_{t_j}$ such that ${\mathfrak s}={\mathfrak t}_m\smile \dots \smile {\mathfrak t}_1.$ By $()$ we denote the {empty tuple}, for $s = 0,$  $\mathbb{J}_0 = \{()\}.$ 
	
	Using the inductive limits we will now discuss the construction of a product system generated by a normal QDS from \cite{BBLS04}. Let $\mathfrak{L}=(\mathfrak{L}_t)_{t\in \mathbb{T}}$ be a normal QDS over a set $S$ on a vNa $\mathcal{B}$ with the Kolmogorov-representation $(F_t,\eta_t)$ for $\mathfrak{L}_t.$ Then by Remark \ref{DS3},  $(F = (F_t), \alpha = (\alpha_{t,s}))$ is the inclusion system and $\eta^{\sigma \odot }=(\eta_t^{\sigma})$ is the generating unit. For fix $t\in \mathbb{T},$ suppose that $F_{\mathfrak t}=F_{t_m}\odot\dots\odot F_{t_1}$ for every ${\mathfrak t}=(t_m,\dots,t_1)\in \mathbb{J}_t,$
	then $\alpha_{{\mathfrak t}(t)}:F_t\to F_{\mathfrak t}$ defined as $$\alpha_{{\mathfrak t}(t)}=(\alpha_{t_m,t_{m-1}}\odot I)(\alpha_{t_m+t_{m-1},t_{m-2}}\odot I)\dots (\alpha_{t_m+\dots t_{3},t_2}\odot I)\alpha_{t_m+\dots t_{2},t_1}.$$ Let ${\mathfrak t}=(t_m,\dots,t_1)={\mathfrak s}_n\smile \dots \smile {\mathfrak s}_1\ge {\mathfrak s}=(s_n,\dots,s_1)$ with $|{\mathfrak s}_j|=s_j,$ define bilinear isometries $\alpha_{{\mathfrak t}{\mathfrak s}}:F_{\mathfrak s}\to F_{\mathfrak t}$ by $$\alpha_{{\mathfrak t}{\mathfrak s}}:=\alpha_{{\mathfrak s}_n(s_n)}\odot\dots \odot \alpha_{{\mathfrak s}_1(s_1)},$$ then $\alpha_{{\mathfrak t}{\mathfrak s}}\alpha_{{\mathfrak s}{\mathfrak r}}=\alpha_{{\mathfrak t}{\mathfrak r}}$ for all
	${\mathfrak t}\ge {\mathfrak s}\ge {\mathfrak r}.$ It gives the family $(F_{\mathfrak t})_{{\mathfrak t}\in \mathbb{J}_t}$ is an {inductive system}, and hence the inductive limit $\mathcal{F}_t=\lim_{{\mathfrak t}\in \mathbb{J}_t}ind F_{\mathfrak t}$ is the von Neumann $\mathcal{B}$-$\mathcal{B}$-module with the
	(bilinear isometries) canonical mappings $j_{\mathfrak t}:F_{\mathfrak t}\to \mathcal{F}_{t}$ (see \cite[Proposition 4.3.1]{BBLS04}). Let  ${\mathfrak s}\in \mathbb{J}_s$ and ${\mathfrak t}\in \mathbb{J}_t,$ clearly $F_{{\mathfrak s}\smile {\mathfrak t}}=F_{\mathfrak s}\odot F_{{\mathfrak t}}.$ We define $C_{s,t}:\mathcal{F}_{s+t}\to \mathcal{F}_s\odot \mathcal{F}_t$ by
	$$C_{s,t}j_{{\mathfrak s}\smile {\mathfrak t}}(y_{\mathfrak s}\odot x_{\mathfrak t})=j_{\mathfrak s}y_{\mathfrak s}\odot j_{\mathfrak t}x_{\mathfrak t},$$ where $y_{\mathfrak s}\in F_{\mathfrak s},x_{\mathfrak t}\in F_{\mathfrak t},{\mathfrak t}\in \mathbb{J}_t$ and ${\mathfrak s}\in \mathbb{J}_s.$ Then by \cite[Theorem 4.3.5]{BBLS04}, $(\mathcal{F}=(\mathcal{F}_t)_{t\in \mathbb{T}},C=(C_{s,t})_{s,t \in \mathbb{T}})$ is a product system. We say that the product system $(\mathcal{F}, C)$ is {generated by the inclusion system} $(F, \alpha).$
	
	Now we recall the following: Suppose that $\mathcal{B}$ is a vNa on $\mathcal{H}.$ Let $\mathfrak{L}$ be a CPD-kernel over a set $S$ on $\mathcal{B},$  and let $(F,\eta)$ denote the Kolmogorov-representation for $\mathfrak{L}.$ Let ${K}=F\odot \mathcal{H}$ be a Hilbert space. For $\sigma \in S,$ defined a bounded map $L_{\eta^{\sigma}}:\mathcal{H}\to K$ by $L_{\eta^{\sigma}}(g)=\eta^{\sigma} \odot g$ for all $g\in \mathcal{H},$ consider $F$ as a concrete subset of $B(\mathcal{H},K).$ Sometimes we write $\eta^{\sigma}g$ instead of $\eta^{\sigma}\odot g.$
	\begin{remark}\label{DJ3}
		Suppose that $\mathcal{B}$ is a vNa on $\mathcal{H}$ 
		and  $(\mathcal{F}, C)$ is the product system defined above. Suppose that $j_{\mathfrak t}:F_{\mathfrak t}\to \mathcal{F}_{t}$ is the canonical mappings for ${\mathfrak t}\in \mathbb{J}_t,$ then
		\begin{equation}\label{DJ5}
			\lim_{{\mathfrak t}\in \mathbb{J}_t}\|y h-j_{\mathfrak t}j_{\mathfrak t}^*(y)h\|=0 \quad \mbox{for every}\quad y\in \mathcal{F}_t,h\in \mathcal{H}.
		\end{equation} That is, $j_{\mathfrak t}j_{\mathfrak t}^*$ increases to identity in SOT.
		
	\end{remark}
	
	Now we present a lifting theorem almost same proof
	of \cite[Theorem 5.3]{BK20} and \cite[Theorem 11]{BM10}.
	\begin{theorem}
		Suppose that $\mathcal{B}$ is a vNa on $\mathcal{H}.$ Let $\mathfrak{L}=(\mathfrak{L}_t)_{t\in \mathbb{T}}$ and $\mathfrak{K}=(\mathfrak{K}_t)_{t\in \mathbb{T}}$ be two normal QDS over a set $S$ on $\mathcal{B}.$ Suppose that $(F, \alpha)$ and $(E, \beta)$  are
		two inclusion systems corresponding $\mathfrak{L}=(\mathfrak{L}_t)_{t\in \mathbb{T}}$ and $\mathfrak{K}=(\mathfrak{K}_t)_{t\in \mathbb{T}}$ generating two product systems $(\mathcal{F}, C)$ and $(\mathcal{E}, B),$ respectively, defined as above. Suppose $j$ and $i$ are the respective inclusion maps and $V:(E, \beta) \to  (F, \alpha)$ is a morphism, then there is a unique morphism $\widehat{V}:(\mathcal{E}, B)\to (\mathcal{F}, C)$ with $$V_s=j_s^*\widehat{V}_si_s \quad \mbox{for}\quad s\in \mathbb{T}.$$
	\end{theorem}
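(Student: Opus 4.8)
The plan is to construct $\widehat{V}_t$ fibrewise as a strong limit over the directed set $\J_t$ of compressions of the iterated tensor powers of $V$, and then to read off the intertwining relation $V_s=j_s^{*}\widehat{V}_si_s$, the morphism property, and uniqueness from the combinatorics of the two inductive limits. First I would fix notation at the level of tuples: for $\ft=(t_m,\dots,t_1)\in\J_t$ set $V_{\ft}:=V_{t_m}\odot\cdots\odot V_{t_1}\colon E_{\ft}\to F_{\ft}$, and note that since $\sum t_i=t$ the growth bound $\|V_s\|\le e^{ms}$ gives $\|V_{\ft}\|\le e^{mt}$ uniformly in $\ft\in\J_t$. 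The key algebraic input is the compatibility relation
\[
V_{\fs}=\alpha_{\ft\fs}^{*}\,V_{\ft}\,\beta_{\ft\fs}\qquad(\ft\ge\fs),
\]
obtained by iterating the weak-morphism identity $V_{s+t}=\alpha_{s,t}^{*}(V_s\odot V_t)\beta_{s,t}$ along a refinement and using $\alpha_{\ft\fr}=\alpha_{\ft\fs}\alpha_{\fs\fr}$, $\beta_{\ft\fr}=\beta_{\ft\fs}\beta_{\fs\fr}$. I would also record that the canonical maps satisfy $j_{\fs}=j_{\ft}\alpha_{\ft\fs}$ and $i_{\fs}=i_{\ft}\beta_{\ft\fs}$, so $j_{\fs}^{*}j_{\ft}=\alpha_{\ft\fs}^{*}$ and $i_{\ft}^{*}i_{\fs}=\beta_{\ft\fs}$ because the $j_{\ft},i_{\ft}$ are isometries.

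The technical heart is the existence of the strong limit $\widehat{V}_t:=\slim_{\ft\in\J_t} T_{\ft}$, where $T_{\ft}:=j_{\ft}V_{\ft}i_{\ft}^{*}$. I would test the net $T_{\ft}$ on vectors $i_{\fs}(y)$, $y\in E_{\fs}$, which form a strongly dense set as $\fs$ runs over $\J_t$ by Remark \ref{DJ3}. For $\ft'\ge\ft\ge\fs$ the compatibility relation together with $j_{\ft}=j_{\ft'}\alpha_{\ft'\ft}$ yields
\[
T_{\ft'}i_{\fs}(y)-T_{\ft}i_{\fs}(y)=j_{\ft'}\bigl(I-\alpha_{\ft'\ft}\alpha_{\ft'\ft}^{*}\bigr)V_{\ft'}\beta_{\ft'\fs}(y).
\]
Since $\alpha_{\ft'\ft}\alpha_{\ft'\ft}^{*}$ is a projection, the $\mathcal{B}$-valued quantities $\langle T_{\ft}i_{\fs}(y),T_{\ft}i_{\fs}(y)\rangle$ form an increasing net bounded above by $e^{2mt}\langle y,y\rangle$; such a net converges strongly in the von Neumann algebra $\mathcal{B}$, and the displayed identity then shows $(T_{\ft}i_{\fs}(y))_{\ft}$ is strongly Cauchy. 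Uniform boundedness $\|T_{\ft}\|\le e^{mt}$ extends the limit to all of $\mathcal{E}_t$, and as a strong limit of bilinear maps of uniformly bounded norm on a von Neumann module $\widehat{V}_t$ is bilinear and, by self-duality of von Neumann modules, adjointable. Taking $\fs=(s)$, the minimum of $\J_s$, gives $j_{(s)}^{*}T_{\ft}i_{(s)}=\alpha_{\ft,(s)}^{*}V_{\ft}\beta_{\ft,(s)}=V_s$ for every $\ft$, whence $j_s^{*}\widehat{V}_si_s=V_s$ in the limit.

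For the morphism property I would restrict the limit defining $\widehat{V}_{s+t}$ to the cofinal family of split tuples $\fs\smile\ft$ with $\fs\in\J_s$, $\ft\in\J_t$; there $V_{\fs\smile\ft}=V_{\fs}\odot V_{\ft}$, $C_{s,t}j_{\fs\smile\ft}=j_{\fs}\odot j_{\ft}$ and $i_{\fs\smile\ft}=B_{s,t}^{*}(i_{\fs}\odot i_{\ft})$, so that $C_{s,t}T_{\fs\smile\ft}=(T_{\fs}\odot T_{\ft})B_{s,t}$. Passing to the limit and using strong continuity of the unitary $C_{s,t}$ gives $C_{s,t}\widehat{V}_{s+t}=(\widehat{V}_s\odot\widehat{V}_t)B_{s,t}$, i.e. a strong (hence weak) morphism. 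For uniqueness, any morphism $\widehat{V}'$ with $j_s^{*}\widehat{V}'_si_s=V_s$ satisfies, after iterating the morphism identity and using the factorizations $j_{\ft}=C_{\ft}^{*}(j_{t_m}\odot\cdots\odot j_{t_1})$ and $i_{\ft}=B_{\ft}^{*}(i_{t_m}\odot\cdots\odot i_{t_1})$, the relation $j_{\ft}^{*}\widehat{V}'_ti_{\ft}=V_{\ft}$ for all $\ft\in\J_t$; since $j_{\ft}j_{\ft}^{*}$ and $i_{\ft}i_{\ft}^{*}$ increase strongly to the identity, this forces $\widehat{V}'_t=\slim_{\ft}j_{\ft}V_{\ft}i_{\ft}^{*}=\widehat{V}_t$.

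The only genuinely delicate point is the monotone strong-convergence argument of the second paragraph, where one must exploit that the defect operators $I-\alpha_{\ft'\ft}\alpha_{\ft'\ft}^{*}$ are projections to upgrade boundedness of the $\mathcal{B}$-valued inner products into strong convergence of the vectors; once that limit is in hand, the remaining verifications are routine bookkeeping with the inductive-limit identities, exactly as in \cite[Theorem 5.3]{BK20} and \cite[Theorem 11]{BM10}.
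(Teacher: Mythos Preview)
Your proposal is correct and follows essentially the same approach as the paper: both define $T_{\ft}=j_{\ft}V_{\ft}i_{\ft}^{*}$, establish the compatibility $V_{\fs}=\alpha_{\ft\fs}^{*}V_{\ft}\beta_{\ft\fs}$, and prove strong convergence of $(T_{\ft})_{\ft\in\J_t}$ via the orthogonality/monotonicity trick coming from the projections $I-\alpha_{\ft'\ft}\alpha_{\ft'\ft}^{*}$. The only cosmetic differences are that the paper carries out the Cauchy estimate explicitly at the Hilbert-space level $\mathcal{F}_t\odot\mathcal{H}$ (testing against vectors $yh$, $h\in\mathcal{H}$) rather than at the $\mathcal{B}$-valued inner-product level, verifies only the weak morphism identity (which coincides with your strong one since $C_{s,t}$ is unitary), and omits the uniqueness argument you spell out.
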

	\begin{proof}
		Suppose $V:(E, \beta) \to  (F, \alpha)$ is a morphism, then for some $k$ we have $\|V_s\|\le e^{ks}$ for $s\in \mathbb{T}.$ Let ${\mathfrak s} = (s_n, ..., s_1)\in \mathbb{J}_s,$ we define $V_{\mathfrak s}:E_{\mathfrak s}\to F_{\mathfrak s}$ by $V_{\mathfrak s}=V_{s_n}\odot\dots\odot V_{s_1}.$ Suppose that $j_{\mathfrak s} : F_{\mathfrak s}\to \mathcal{F}_s$ and $i_{\mathfrak s} : E_{\mathfrak s}\to \mathcal{E}_s$ are the canonical maps (bilinear isometries). Let ${\mathfrak s}\le {\mathfrak t}$ in $\mathbb{J}_s,$ then \begin{equation}\label{DJ4}
			\alpha^*_{{\mathfrak t}{\mathfrak s}}V_{\mathfrak t}\beta_{{\mathfrak t}{\mathfrak s}}=V_{\mathfrak s}.
		\end{equation}
		Let ${\mathfrak s}\in \mathbb{J}_s,$ define $\mathfrak{T}_{{\mathfrak s}} := j_{\mathfrak s}V_{\mathfrak s}i^*_{\mathfrak s}.$ Let $Q_{\mathfrak s}=i_{\mathfrak s}i^*_{\mathfrak s}$ and $P_{\mathfrak s}=j_{\mathfrak s}j^*_{\mathfrak s}.$ From Remark \ref{DJ3}, $(Q_{{\mathfrak s}})_{{\mathfrak s}\in \mathbb{J}_s}$ and $(P_{{\mathfrak s}})_{\mathfrak s \in \mathbb{J}_s}$
		are increasing projections families. Let ${\mathfrak r}\le {\mathfrak s},$ then $i_{\mathfrak s}\beta_{{\mathfrak s}{\mathfrak r}}=i_{{\mathfrak r}}$ and $j_{\mathfrak s}\alpha_{{\mathfrak s}{\mathfrak r}}=j_{{\mathfrak r}},$ and hence $i_{\mathfrak s}^*i_{\mathfrak r}=\beta_{{\mathfrak s}{\mathfrak r}}$ and $j_{\mathfrak s}^*j_{\mathfrak r}=\alpha_{{\mathfrak s}{\mathfrak r}}.$ From Equation (\ref{DJ4}), we get $\mathfrak{T}_{{\mathfrak r}}=P_{\mathfrak r}\mathfrak{T}_{{\mathfrak s}}Q_{\mathfrak r}.$
		
		For $s\in \mathbb{T},$ $\mathcal{F}_s \subseteq B(\mathcal{H}, \mathcal{F}_s\odot \mathcal{H})$ and $\mathcal{E}_s \subseteq B(\mathcal{H}, \mathcal{E}_s\odot \mathcal{H}).$ Let $s\in \mathbb{T}$ fix, $y\in \mathcal{E}_s,h\in \mathcal{H},\delta> 0$ and from Equation (\ref{DJ5}) choose ${{\mathfrak r}_0}\in \mathbb{J}_{s}$ such that
		\begin{equation}\label{DJ6}
			e^{ks}\|Q_{{\mathfrak r}_0}(y)h-yh\|< \frac{\delta}{3}.
		\end{equation}This implies that
		\begin{align*}
			\|\mathfrak{T}_{{\mathfrak s}}(y)h-\mathfrak{T}_{{\mathfrak s}}Q_{{\mathfrak r}_0}(y)h\|&=\|\mathfrak{T}_{{\mathfrak s}}(y)\odot h-\mathfrak{T}_{{\mathfrak s}}Q_{{\mathfrak r}_0}(y)\odot h\|=\|(\mathfrak{T}_{{\mathfrak s}}\odot I_{\mathcal{H}})(y\odot h-Q_{{\mathfrak r}_0}(y)\odot h)\|\\&\le \|\mathfrak{T}_{{\mathfrak s}}\odot I_{\mathcal{H}}\|\|y h-Q_{{\mathfrak r}_0}(y)h\|\le e^{ks}\|y h-Q_{{\mathfrak r}_0}(y)h\|< \frac{\delta}{3}.
		\end{align*}
		For ${\mathfrak t}\ge {\mathfrak s}\ge {\mathfrak r_0}\in \mathbb{J}_s,$ since $(Q_{{\mathfrak s}})_{{\mathfrak s}\in \mathbb{J}_s}$ and $(P_{{\mathfrak s}})_{\mathfrak s \in \mathbb{J}_s}$ are families of increasing projections, we get 
		\begin{align}\label{DJ7}
			\nonumber	\|\mathfrak{T}_{{\mathfrak t}}Q_{{\mathfrak r}_0}(y)h\|^2&=\|P_{{\mathfrak t}}\mathfrak{T}_{{\mathfrak t}}Q_{{\mathfrak r}_0}(y)h\|^2=\|P_{{\mathfrak s}}\mathfrak{T}_{{\mathfrak t}}Q_{{\mathfrak r}_0}(y)h-(P_{{\mathfrak t}}-P_{{\mathfrak s}})\mathfrak{T}_{{\mathfrak t}}Q_{{\mathfrak r}_0}(y)h\|^2\\&
			\nonumber=\|P_{{\mathfrak s}}\mathfrak{T}_{{\mathfrak t}}Q_{{\mathfrak r}_0}(y)h\|^2+\|(P_{{\mathfrak t}}-P_{{\mathfrak s}})\mathfrak{T}_{{\mathfrak t}}Q_{{\mathfrak r}_0}(y)h\|^2\\&
			\nonumber=\|P_{{\mathfrak s}}\mathfrak{T}_{{\mathfrak t}}Q_{{\mathfrak s}}Q_{{\mathfrak r}_0}(y)h\|^2+\|\mathfrak{T}_{{\mathfrak t}}Q_{{\mathfrak r}_0}(y)h-P_{{\mathfrak s}}\mathfrak{T}_{{\mathfrak t}}Q_{{\mathfrak s}}Q_{{\mathfrak r}_0}(y)h\|^2\\&=\|\mathfrak{T}_{{\mathfrak s}}Q_{{\mathfrak r}_0}(y)h\|^2+\|\mathfrak{T}_{{\mathfrak t}}Q_{{\mathfrak r}_0}(y)h-\mathfrak{T}_{{\mathfrak s}}Q_{{\mathfrak r}_0}(y)h\|^2.
		\end{align} Therefore $\|\mathfrak{T}_{{\mathfrak t}}Q_{{\mathfrak r}_0}(y)h\|^2\ge \|\mathfrak{T}_{{\mathfrak s}}Q_{{\mathfrak r}_0}(y)h\|^2$ for ${\mathfrak t}\ge {\mathfrak s}\ge {\mathfrak r_0}\in \mathbb{J}_s,$ and hence $$\|\mathfrak{T}_{{\mathfrak s}}Q_{{\mathfrak r}_0}(y)h\|^2\le \|\mathfrak{T}_{{\mathfrak t}}Q_{{\mathfrak r}_0}\|^2\|y\|^2\|h\|^2\le e^{2ks}\|y\|^2\|h\|^2\quad for \quad {\mathfrak s}\in \mathbb{J}_s.$$ It follows that $(\|\mathfrak{T}_{{\mathfrak s}}Q_{{\mathfrak r}_0}(y)h\|^2)_{{\mathfrak s}\in \mathbb{J}_s}$ is Cauchy net, and hence for ${\mathfrak r_1}\in \mathbb{J}_s$ such that ${\mathfrak r_1}\ge {\mathfrak r_0}$  and
		\begin{equation}\label{DJ8}
			|\|\mathfrak{T}_{{\mathfrak t}}Q_{{\mathfrak r}_0}(y)h\|^2-\|\mathfrak{T}_{{\mathfrak s}}Q_{{\mathfrak r}_0}(y)h\|^2| < \frac{\delta^2}{9}\quad \mbox{for all} \quad {\mathfrak t}\ge {\mathfrak s}\ge {\mathfrak r_1}\ge {\mathfrak r_0}\in \mathbb{J}_s.
		\end{equation} From Equations (\ref{DJ7}) and (\ref{DJ8}) and for all ${\mathfrak t}\ge {\mathfrak s}\ge {\mathfrak r_1}\in \mathbb{J}_s,$ we get
		\begin{align*}
			\|(\mathfrak{T}_{{\mathfrak t}}-\mathfrak{T}_{{\mathfrak s}})(y)h\|&\le \|\mathfrak{T}_{{\mathfrak t}}(y)h-\mathfrak{T}_{{\mathfrak t}}Q_{{\mathfrak r}_0}(y)h\|+\|\mathfrak{T}_{{\mathfrak t}}Q_{{\mathfrak r}_0}(y)h-\mathfrak{T}_{{\mathfrak s}}Q_{{\mathfrak r}_0}(y)h\|+\|\mathfrak{T}_{{\mathfrak s}}Q_{{\mathfrak r}_0}(y)h-\mathfrak{T}_{{\mathfrak s}}(y)h\| < \delta.
		\end{align*} Thus $\lim_{{\mathfrak s}\in \mathbb{J}_s}\mathfrak{T}_{{\mathfrak s}}(y)h$
		exists. Let $s\ge 0,$ we define a (bilinear) bounded map $\widehat{V}_s:\mathcal{E}_s\to \mathcal{F}_s$ by $$\widehat{V}_s(y)h:=\lim_{{\mathfrak s}\in \mathbb{J}_s}\mathfrak{T}_{{\mathfrak s}}(y)h.$$ Let $y_{\mathfrak s}\in E_{\mathfrak s},{\mathfrak s}\in \mathbb{J}_s$ and $h\in\mathcal{H},$ we get
		\begin{align*}
			j_{\mathfrak s}^*\widehat{V}_si_{\mathfrak s}(y_{\mathfrak s})h&=\lim_{{\mathfrak r}\in \mathbb{J}_s}j_{\mathfrak s}^*\mathfrak{T}_{{\mathfrak r}}i_{\mathfrak s}(y_{\mathfrak s})h=\lim_{{\mathfrak r}\in \mathbb{J}_s}j_{\mathfrak s}^*j_{\mathfrak r}V_{\mathfrak r}i^*_{\mathfrak r}i_{\mathfrak s}(y_{\mathfrak s})h=\lim_{{\mathfrak r}\in \mathbb{J}_s}\alpha^*_{\mathfrak r{\mathfrak s}}V_{\mathfrak r}\beta_{\mathfrak r{\mathfrak s}}(y_{\mathfrak s})h=V_{\mathfrak s}(y_{\mathfrak s})h.
		\end{align*} This shows that $j_{\mathfrak s}^*\widehat{V}_si_{\mathfrak s}=V_{\mathfrak s}$ for  $s\in\mathbb{T}$ and ${\mathfrak s}\in \mathbb{J}_s.$ In particular $j_{s}^*\widehat{V}_si_{s}=V_{s}$ for $s\in\mathbb{T}.$
		It is sufficient to show that $(\widehat{V}_t)_{t\in \mathbb{T}}$ is a morphism. Let ${\mathfrak s}\in\mathbb{J}_s,{\mathfrak t}\in\mathbb{J}_t$ and $z_{\mathfrak t}\in F_{\mathfrak t},z_{\mathfrak s}\in F_{\mathfrak s},y_{\mathfrak t}\in E_{\mathfrak t},y_{\mathfrak s}\in E_{\mathfrak s}$ we have
		\begin{align*}
			&\langle C^*_{s,t}(\widehat{V}_s\odot \widehat{V}_t)B_{s,t}i_{{\mathfrak s}\smile {\mathfrak t}}(y_{\mathfrak s}\odot y_{\mathfrak t}),j_{{\mathfrak s}\smile {\mathfrak t}}(z_{\mathfrak s}\odot z_{\mathfrak t})\rangle=\langle \widehat{V}_si_{\mathfrak s}y_{\mathfrak s}\odot \widehat{V}_ti_{\mathfrak t}y_{\mathfrak t},j_{\mathfrak s}z_{\mathfrak s}\odot  j_{\mathfrak t} z_{\mathfrak t}\rangle\\&=\langle j_{\mathfrak t}^*\widehat{V}_ti_{\mathfrak t}y_{\mathfrak t},\langle j_{\mathfrak s}^* \widehat{V}_si_{\mathfrak s}y_{\mathfrak s},z_{\mathfrak s}\rangle   z_{\mathfrak t}\rangle= \langle (V_{\mathfrak s}\odot V_{\mathfrak t})(y_{\mathfrak s}\odot y_{\mathfrak t}), z_{\mathfrak s}\odot  z_{\mathfrak t}\rangle=\langle V_{{\mathfrak s}\smile{\mathfrak t}}(y_{\mathfrak s}\odot y_{\mathfrak t}), z_{\mathfrak s}\odot  z_{\mathfrak t}\rangle\\&=\langle \widehat{V}_{s+t}i_{{\mathfrak s}\smile{\mathfrak t}}(y_{\mathfrak s}\odot y_{\mathfrak t}),j_{{\mathfrak s}\smile{\mathfrak t}} (z_{\mathfrak s}\odot  z_{\mathfrak t})\rangle.
		\end{align*} Therefore $C^*_{s,t}(\widehat{V}_s\odot \widehat{V}_t)B_{s,t}=\widehat{V}_{s+t}$ for $s,t\in \mathbb{T}.$
	\end{proof}

	\section{Factorization theorem of a semigroup of $\mathfrak{K}$-families}
In this section, we will discuss the concept of a semigroup of $\mathfrak{K}$-families on a Hilbert $C^*$-module over a $C^*$-algebra. 
A family of CPD-kernels $\{\mathfrak{K}_s\}_{s\in \mathbb T}$ over a set $S$ on a $C^*$-algebra $\mathcal{B}$ forms a {\it CPD-semigroup} or {\it semigroup of CPD-kernels}
if $\mathfrak{K}_s\circ\mathfrak{K}_r=\mathfrak{K}_{s+r}$ for all $s,r\in \mathbb{T}=\mathbb{N}_0$ or $\mathbb{R}_{+}$. 

\begin{definition}
	Let $S$ be a set and $\mathcal{B}$ be a $C^*$-algebra. Suppose that $E$ is a Hilbert $C^*$-module over $\mathcal{B}$ and $\mathcal{K}^{\sigma}_s:E\to E$  is a map 
	for all $s\in \mathbb{T}$ and $\sigma\in S.$ A semigroup $\{\{\mathcal{K}^{\sigma}_s\}_{\sigma\in S}:s\in \mathbb{T}\}$ is said to be
	\begin{enumerate}[(a)]
		\item a {\rm CPD-semigroup} on $E$ if $\{\{\mathcal{K}^{\sigma}_s\}_{\sigma\in S}:s\in \mathbb{T}\}$ extends to block-wise  semigroup of CPD-kernels $\begin{pmatrix}
			{\mathfrak{K}_s^{\sigma,\sigma'}} &  {\mathcal{K}_s^{\sigma^*}} \\
			\mathcal{K}_s^{\sigma'} & \vartheta_s
		\end{pmatrix}$
		on the linking algebra of $E$.
		\item a {\rm CPDH-semigroup} on $E$ if $\{\{\mathcal{K}^{\sigma}_s\}_{\sigma\in S}:s\in \mathbb{T}\}$ is a CPD-semigroup, where $\vartheta_s$ is an $E$-semigroup and for all $s\in \mathbb{T},$ choose the kernel $\{\mathfrak K_s^{\sigma,\sigma'}:\sigma',\sigma\in S\}$ such that $\{\mathcal{K}^\sigma_s\}_{\sigma\in S}$ is a $\mathfrak K_s$-family.
	\end{enumerate}
	
\end{definition}

Now we present a semigroup version of Theorem \ref{DS5}.
\begin{theorem}\label{main result}
	Suppose that ${E}$ is a full Hilbert $C^*$-module over a unital $C^*$-algebra $\mathcal{B}.$ Let $S$ be a set, $\mathcal{K}_t^{\sigma}$ be a family of linear functions on $E$ for every $t\in \mathbb{T},\sigma\in S$ and denote $\mathcal{E}_t:=\overline{span}\{\mathcal{K}^{\sigma}_t(x)b: b\in \mathcal{B},x\in E,\sigma\in S, t\in \mathbb{T}\}.$ Then the following conditions are equivalent:
	\begin{enumerate}
		\item [(a)] There is a semigroup $\mathfrak{K}=(\mathfrak{K}_t)_{t\in \mathbb{T}}$ of CPD-kernels over $S$ on $\mathcal{B}$ such that a semigroup $\mathcal{K}=\{\{\mathcal{K}^{\sigma}_t\}_{\sigma\in S}:t\in \mathbb{T}\}$ is a $\mathfrak{K}_t$-family.
		\item [(b)] $\{\{\mathcal{K}^{\sigma}_t\}_{\sigma\in S}:t\in \mathbb{T}\}$ extends to block-wise semigroup of CPD-kernels $\begin{pmatrix}
			{\mathfrak{K}_t^{\sigma,\sigma'}} &  {\mathcal{K}_t^{\sigma^*}} \\
			\mathcal{K}_t^{\sigma'} & \vartheta_{t}
		\end{pmatrix}:\begin{pmatrix}
			\mathcal{B} &  E^* \\
			E & B^a(E)
		\end{pmatrix}\to \begin{pmatrix}
			\mathcal{B} &  \mathcal{E}_t^* \\
			\mathcal{E}_t & B^a(\mathcal{E}_t)
		\end{pmatrix},$ where  $(\vartheta_{t})_{t\in \mathbb{T}}$ is a semigroup of $*$-homomorphisms.
		\item [(c)] For $\sigma_1,\ldots,\sigma_n\in S$ the family of maps from $E_n$ to $E_n$ given by
		$${\bf x}\mapsto({\mathcal{K}_t^{\sigma_1}}(x_1),{\mathcal{K}_t^{\sigma_2}}(x_2),\ldots,{\mathcal{K}_t^{\sigma_n}}(x_n))~\mbox{for}~{\bf x}=(x_1,x_2\ldots,x_n)\in E_n,t\in \mathbb{T}$$ is a semigroup of completely bounded maps and $\mathcal{E}_t$ can be made into
		a $B^a ({E})$-$\mathcal{B}$-correspondences with 
		$\mathcal{K}_t^{\sigma}$ is a left $ B^a
		({E})$-linear map for all $\sigma\in S,t\in \mathbb{T}.$

		\item [(d)] For $\sigma_1,\ldots,\sigma_n\in S$ the family of maps from $E_n$ to $E_n$ given by
		$${\bf x}\mapsto({\mathcal{K}_t^{\sigma_1}}(x_1),{\mathcal{K}_t^{\sigma_2}}(x_2),\ldots,{\mathcal{K}_t^{\sigma_n}}(x_n))~\mbox{for}~{\bf x}=(x_1,x_2\ldots,x_n)\in E_n,t\in \mathbb{T}$$ is a semigroup of completely bounded maps and $\{\mathcal{K}_t^{\sigma}\}_{\sigma\in S}$
		satisfies
		$$\langle \mathcal{K}_t^\sigma{( y)},\mathcal{K}_t^{\sigma'}({ z}\langle { z}',{ y}'\rangle)\rangle=\langle \mathcal{K}_t^{\sigma}({ z}'\langle {z},{ y}\rangle),\mathcal{K}_t^{\sigma'}({y}')\rangle~\mbox{for all}\quad t\in \mathbb{T},{z},{ z}',y,{ y}'\in{E},\sigma',\sigma\in S.$$ 
	\end{enumerate}
\end{theorem}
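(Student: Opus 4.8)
The plan is to freeze the time parameter and reduce every clause to its static counterpart in Theorem \ref{DS5}, and then to upgrade each implication by checking that composition in $t$ is transported faithfully. For each fixed $t\in\mathbb{T}$, clauses (a)--(d) of Theorem \ref{main result} specialize exactly to clauses (a)--(d) of Theorem \ref{DS5} (with $F=E$ and $F_{\mathcal{K}}=\mathcal{E}_t$), so the pointwise equivalences hold for free at every $t$. What then remains is purely semigroup content: recording that $\mathcal{K}^\sigma_{s+t}=\mathcal{K}^\sigma_s\circ\mathcal{K}^\sigma_t$, that $\mathfrak{K}_s\circ\mathfrak{K}_t=\mathfrak{K}_{s+t}$, that $(\vartheta_t)_{t\in\mathbb{T}}$ is an endomorphism semigroup, and that the block kernels and the $n$-fold completely bounded maps compose. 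I would run the cycle (a)$\Rightarrow$(b)$\Rightarrow$(c)$\Rightarrow$(d)$\Rightarrow$(a), citing Theorem \ref{DS5} for the single-time step and proving the composition laws by hand.

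The simplest link to record first is (d)$\Rightarrow$(a). Taking $n=1$ in the semigroup of completely bounded maps gives $\mathcal{K}^\sigma_{s+t}=\mathcal{K}^\sigma_s\circ\mathcal{K}^\sigma_t$, while Theorem \ref{DS5} supplies at each $t$ a unique CPD-kernel $\mathfrak{K}_t$ for which $\{\mathcal{K}^\sigma_t\}_{\sigma\in S}$ is a $\mathfrak{K}_t$-family. Applying the family identity twice,
\[
\mathfrak{K}^{\sigma,\sigma'}_{s+t}(\langle y,x\rangle)=\langle\mathcal{K}^\sigma_s\mathcal{K}^\sigma_t(y),\mathcal{K}^{\sigma'}_s\mathcal{K}^{\sigma'}_t(x)\rangle=\mathfrak{K}^{\sigma,\sigma'}_s\big(\mathfrak{K}^{\sigma,\sigma'}_t(\langle y,x\rangle)\big),
\]
and fullness of $E$ (so that the inner products $\langle y,x\rangle$ are total in $\mathcal{B}$) forces $\mathfrak{K}_{s+t}=\mathfrak{K}_s\circ\mathfrak{K}_t$; hence $\mathfrak{K}$ is a CPD-semigroup and (a) holds.

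Going forward, (a)$\Rightarrow$(b) and (b)$\Rightarrow$(c)$\Rightarrow$(d) are each obtained from Theorem \ref{DS5} at fixed $t$, so only the semigroup clauses need argument. For the $n$-fold maps this is immediate once one knows the block-kernel encodings compose as $\Phi_s\circ\Phi_t=\Phi_{s+t}$: the $(1,1)$ corner is $\mathfrak{K}_s\circ\mathfrak{K}_t=\mathfrak{K}_{s+t}$ and the off-diagonal corners record $\mathcal{K}^\sigma_{s+t}=\mathcal{K}^\sigma_s\circ\mathcal{K}^\sigma_t$. The delicate clause is that the diagonal corner $(\vartheta_t)$ is an endomorphism semigroup. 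Here I would invoke the factorization of Theorem \ref{1234}: write $\mathcal{K}^\sigma_t(x)=v_t(x\odot\mathfrak{i}_t(\sigma))$ with $v_t\colon E\odot\mathcal{F}_t\to E$ an isometry, so that $\vartheta_t(a)=v_t(a\odot I_{\mathcal{F}_t})v_t^*$. The composition law $\mathcal{K}^\sigma_{s+t}=\mathcal{K}^\sigma_s\circ\mathcal{K}^\sigma_t$ yields $\mathcal{F}_{s+t}\simeq\mathcal{F}_t\odot\mathcal{F}_s$ and $v_{s+t}=v_s(v_t\odot I_{\mathcal{F}_s})$, whence
\[
\vartheta_s(\vartheta_t(a))=v_s\big((v_t(a\odot I_{\mathcal{F}_t})v_t^*)\odot I_{\mathcal{F}_s}\big)v_s^*=v_{s+t}(a\odot I_{\mathcal{F}_{s+t}})v_{s+t}^*=\vartheta_{s+t}(a).
\]

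I expect the product-system factorization $v_{s+t}=v_s(v_t\odot I_{\mathcal{F}_s})$, and through it the endomorphism-semigroup identity $\vartheta_s\circ\vartheta_t=\vartheta_{s+t}$, to be the main obstacle. The difficulty is not merely formal: $\vartheta_t$ is manufactured on $\mathcal{E}_t$ while the outer $\vartheta_s$ lives on $E$, so one must genuinely use $\mathcal{E}_t\subseteq E$ together with the intertwining $v_t^*v_t=\mathrm{id}$ and the multiplicativity of the $v_t$ (equivalently, the inclusion-system structure built earlier in the paper) to glue the two constructions, rather than reading the identity off the notation. Once this is secured, the surviving checks---fullness in (d)$\Rightarrow$(a), and the entrywise compatibility of the off-diagonal corners of $\Phi_s\circ\Phi_t$---are routine transports of the static equivalences of Theorem \ref{DS5}.
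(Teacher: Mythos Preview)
Your proposal is correct and follows essentially the same route as the paper: reduce each fixed-$t$ implication to Theorem \ref{DS5}, and establish the semigroup law for $\vartheta$ via the factorization isometries of Theorem \ref{1234}. The one refinement to note is that the Kolmogorov modules form only an \emph{inclusion} system, so your $\mathcal{F}_{s+t}\simeq\mathcal{F}_t\odot\mathcal{F}_s$ should read $\nu_{s+t}=\nu_s(\nu_t\odot I_{F_s})(I_E\odot\alpha_{t,s})$ with $\alpha_{t,s}$ the canonical isometry; the paper then verifies $\vartheta_{s+t}=\vartheta_s\circ\vartheta_t$ by evaluating both sides on generators $\mathcal{K}^\sigma_{s+t}(x)b$ of $\mathcal{E}_{s+t}$ rather than as an abstract operator identity, which is precisely the obstacle you anticipate in your final paragraph.
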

\begin{proof}
	$(a)\Rightarrow (b):$
	Let $\mathfrak{K}=(\mathfrak{K}_t)_{t\in \mathbb{T}}$ be a semigroup of CPD-kernels over $S$ on $\mathcal{B}$ with  the (minimal) Kolmogorov-representation $(F_t, \eta_t)$ for $\mathfrak{K}_t.$ Let $(F_s, \eta_s^{\sigma})$ and $(F_{s+t}, \eta_{s+t}^{\sigma})$ be the
	Kolmogorov-representations for $\mathfrak{K}_s$ and $\mathfrak{K}_{s+t},$ respectively. From Equation (\ref{DPS1}), $(F_t\odot F_s ,\eta_{t}^{\sigma}\odot \eta_{s}^{\sigma})$ is a Kolmogorov-representation for $\mathfrak{K}_{s+t}.$
	Define bilinear isometries $\alpha_{s,t}:F_{s+t}\to F_s \odot F_t$ by
	\begin{equation*}
		\eta^{\sigma}_{t+s}\mapsto \eta_s^{\sigma}\odot \eta_t^{\sigma}.
	\end{equation*} So from Remark \ref{DS3}, $(F = (F_t), \alpha = (\alpha_{s,t}))$ is an inclusion system. Consider $F_{s+t}$ as the submodule $\overline{span}(\mathcal{B}\eta_{t}^{\sigma}\odot \eta_{s}^{\sigma}\mathcal{B})$ of $F_t\odot F_s.$
	Note that $F_t\odot F_s=\overline{span}(\mathcal{B}\eta_{t}^{\sigma}\mathcal{B}\odot\mathcal{B}\eta_{s}^{\sigma}\mathcal{B})=\overline{span}(\mathcal{B}\eta_{t}^{\sigma}\odot\mathcal{B}\eta_{s}^{\sigma}\mathcal{B})=\overline{span}(\mathcal{B}\eta_{t}^{\sigma}\mathcal{B}\odot\eta_{s}^{\sigma}\mathcal{B}).$ For $s,t\in \mathbb{T},$ let $\{\mathcal{K}^{\sigma}_t\}_{\sigma\in S}$ be a $\mathfrak{K}_t$-family and $\{\mathcal{K}^{\sigma}_s\}_{\sigma\in S}$ be a $\mathfrak{K}_s$-family. Then by Theorem \ref{1234}, there exist isometries $
	\nu_t:{E}\bigodot F_t\to{E}$ and $\nu_s:{E}\bigodot F_s\to{E}$ with
	\begin{align*}
		\nu_t(x\odot \eta_t^{\sigma})=\mathcal{K}^{\sigma}_t(x)~~ and ~~ \nu_s(y\odot \eta_s^{\sigma})=\mathcal{K}^{\sigma}_s(y)~\mbox{for all}~y,x\in E,~\sigma\in S,s,t\in \mathbb{T}.
	\end{align*}
	It follows that
	$\mathcal{K}^{\sigma}_{s+t}(x)=\mathcal{K}^{\sigma}_s(\mathcal{K}^{\sigma}_t(x))=\nu_s(\mathcal{K}^{\sigma}_t(x)\odot \eta_s^{\sigma})=\nu_s(\nu_t\odot id_{F_s})(x\odot \eta_t^{\sigma}\odot \eta_s^{\sigma}).$ For $\sigma',\sigma\in S$ and $y,x\in E$  we get
	\begin{align*}\langle\mathcal{K}^{\sigma}_{s+t}(x),\mathcal{K}^{\sigma'}_{s+t}(y)\rangle&=\langle\mathcal{K}^{\sigma}_s(\mathcal{K}^{\sigma}_t(x)),\mathcal{K}^{\sigma'}_s(\mathcal{K}^{\sigma'}_t(y))\rangle=\mathfrak{K}_s^{\sigma,\sigma'}(\langle \mathcal{K}^{\sigma}_t(x),\mathcal{K}^{\sigma'}_t(y)\rangle)\\&=\mathfrak K_{s+t}^{\sigma,\sigma'}(\langle x,y\rangle)=\langle \eta^{\sigma}_{t+s}, \langle x,y\rangle\eta^{\sigma'}_{t+s}\rangle=\langle x\odot \eta^{\sigma}_{t+s}, y\odot\eta^{\sigma'}_{t+s}\rangle.
	\end{align*} 
	This computation provides us an isometry  $
	\nu_{s+t}:{E}\bigodot F_{s+t}\to{E}$ defined by
	$$\nu_{s+t}(x\odot \eta^{\sigma}_{t+s}):=\mathcal{K}^{\sigma}_{s+t}(x),$$ then the following diagram commutes:
	\[
	\begin{tikzcd}
		E\odot F_{s+t} \arrow{r}{I_E\odot \alpha_{t,s}} \arrow[swap]{dr}{\nu_{s+t}} & E\odot(F_t\odot F_s) \arrow{d}{\nu_s(\nu_t\odot I_{F_s})} \\
		& E
	\end{tikzcd}
	\]
	Hence we get $\nu_{s+t}=\nu_s(\nu_t\odot I_{F_s})(I_E\odot \alpha_{t,s})$ for $s,t\in \mathbb{T}.$ Let $\mathcal{E}_t=\overline{span}\{\mathcal{K}^{\sigma}_t(x)b: x\in E,b\in \mathcal{B},\sigma\in S, t\in \mathbb{T}\}\subseteq E,$ we obtained a unitary from $\nu_{s+t}$, by restricting its codomain to $\mathcal{E}_{s+t}$, we denote $\nu_{s+t}$ again. Define a $*$-homomorphism $\vartheta_{s+t}:B^{a}(E)\to B^{a}(\mathcal{E}_{s+t})$ by $\vartheta_{s+t}(a) = \nu_{s+t}(a \odot I_{F_{s+t}})\nu_{s+t}^*$ for $a\in B^{a}(E).$ Let $\mathcal{K}_{s+t}^{\sigma}(x)b\in \mathcal{E}_{s+t},$ then
	\begin{align*}
		\vartheta_{s+t}(a)\mathcal{K}_{s+t}^{\sigma}(x)b&=\nu_{s+t}(a \odot I_{F_{s+t}})\nu_{s+t}^*\mathcal{K}_{s+t}^{\sigma}(x)b\\&=\nu_s(\nu_t\odot I_{F_s})(I_E\odot \alpha_{t,s})(a \odot I_{F_{s+t}})(I_E\odot \alpha_{t,s}^*)(\nu_t^*\odot I_{F_s})\nu_s^*\mathcal{K}_{s+t}^{\sigma}(x)b\\&=\nu_s(\nu_t\odot I_{F_s})(I_E\odot \alpha_{t,s})(a \odot I_{F_{s+t}})(I_E\odot \alpha_{t,s}^*)(x\odot \eta_t^{\sigma}\odot \eta_s^{\sigma})b\\&=\nu_s(\nu_t\odot I_{F_s})(I_E\odot \alpha_{t,s})(a \odot I_{F_{s+t}})(x\odot \eta_{s+t}^{\sigma})b\\&=\nu_s(\nu_t\odot I_{F_s})(I_E\odot \alpha_{t,s})(ax\odot \eta_{s+t}^{\sigma})b=\nu_s(\nu_t\odot I_{F_s})(ax\odot \eta_t^{\sigma}\odot \eta_s^{\sigma})b\\&=\nu_s(\nu_t\odot I_{F_s})(a\odot I_{F_t}\odot I_{F_s})(\nu_t^*\odot I_{F_s})\nu_s^*\mathcal{K}_{s+t}^{\sigma}(x)b\\&=\nu_{s}(\vartheta_{t}(a)\odot I_{F_s})\nu_{s}^*\mathcal{K}_{s+t}^{\sigma}(x)b=\vartheta_{s}(\vartheta_{t}(a))\mathcal{K}_{s+t}^{\sigma}(x)b.
	\end{align*} Thus $\vartheta_{s+t}(a)=\vartheta_{s}(\vartheta_{t}(a))$ for all $a\in B^{a}(E).$ 
	It is easy to verify that 
	$$\begin{pmatrix}
		{\mathfrak{K}_s^{\sigma,\sigma'}} &  {\mathcal{K}_s^{\sigma^*}} \\
		\mathcal{K}_s^{\sigma'} & \vartheta_{s}
	\end{pmatrix}\begin{pmatrix}
		{\mathfrak{K}_t^{\sigma,\sigma'}} &  {\mathcal{K}_t^{\sigma^*}} \\
		\mathcal{K}_t^{\sigma'} & \vartheta_{t}
	\end{pmatrix}=\begin{pmatrix}
		\mathfrak{K}_{s+t}^{\sigma,\sigma'} &  {\mathcal{K}_{s+t}^{\sigma^*}} \\
		\mathcal{K}_{s+t}^{\sigma'}  & \vartheta_{s+t}
	\end{pmatrix} \quad \mbox{for all}\quad s,t\in \mathbb{T}.$$ 
	From Theorem \ref{DS5}, $\{\{\mathcal{K}^{\sigma}_t\}_{\sigma\in S}:t\in \mathbb{T}\}$ extends to block-wise semigroup of CPD-kernels $\begin{pmatrix}
		{\mathfrak{K}_t^{\sigma,\sigma'}} &  {\mathcal{K}_t^{\sigma^*}} \\
		\mathcal{K}_t^{\sigma'} & \vartheta_{t}
	\end{pmatrix}:\begin{pmatrix}
		\mathcal{B} &  E^* \\
		E & B^a(E)
	\end{pmatrix}\to \begin{pmatrix}
		\mathcal{B} &  \mathcal{E}_t^* \\
		\mathcal{E}_t & B^a(\mathcal{E}_t)
	\end{pmatrix}.$

	$(b)\Rightarrow (c):$ From Theorem \ref{DS5}, clearly the family of maps from $E_n$ to $E_n$ defined by
	\begin{equation}\label{PPP1}
		{\bf x}\mapsto({\mathcal{K}_t^{\sigma_1}}(x_1),{\mathcal{K}_t^{\sigma_2}}(x_2),\ldots,{\mathcal{K}_t^{\sigma_n}}(x_n))~\mbox{for}~{\bf x}=(x_1,x_2\ldots,x_n)\in E_n
	\end{equation}
	is a semigroup of completely bounded map. Since $\mathcal{K}_t^{\sigma}$ and $\mathcal{K}_s^{\sigma}$ are left $B^a({E})$-linear maps for $s,t\in \mathbb{T}.$ For $x\in E,a\in B^a(E)$ we have
	$${\mathcal{K}_{s+t}^{\sigma}}(ax)=\mathcal{K}_s^{\sigma}(\mathcal{K}_t^{\sigma}(ax))=\mathcal{K}_s^{\sigma}(\vartheta_{t}(a) \mathcal{K}_t^{\sigma}(x))=\vartheta_{s}(\vartheta_{t}(a))\mathcal{K}_s^{\sigma}(\mathcal{K}_t^{\sigma}(x))=\vartheta_{s+t}(a){\mathcal{K}_{s+t}^{\sigma}}(x).$$
	Hence $\mathcal{K}_{s+t}^{\sigma}$ is left $B^a ({E})$-linear function for every $\sigma\in S.$ Observe that $\mathcal{E}_{s+t}$ is a $B^a ({E})$-$\mathcal{B}$-correspondence with left module action is given by $\vartheta_{s+t}.$
	
	The rest of the proof $(d)\Leftrightarrow (c)$ and $(c)\Rightarrow (a)$ proceeds in the same as in \cite[Theorem 3.2]{DT17}.
\end{proof}

Now we recall the following: Let $\mathfrak{K}=(\mathfrak{K}_t)_{t\ge 0}$ be a semigroup of CPD-kernels over $S$ on a unital $C^*$-algebra $\mathcal{B},$ where $\mathcal{B}\subseteq B(\mathcal{H}).$ Let $(F_t,\eta_t)$ be the minimal Kolmogorov-representation for $\mathfrak{K}_t.$ Let ${K}_t=F_t\odot \mathcal{H}$ and denote by $\rho_t:b \mapsto b \odot I_{\mathcal{H}}$ be the Stinespring representation of $\mathcal{B}$ on ${K_t}.$ For $\sigma \in S,$ define a bounded map $L_{\eta_t^{\sigma}}:\mathcal{H}\to K_t$ defined by $L_{\eta_t^{\sigma}}(h)=\eta_t^{\sigma} \odot h$ for all $h\in \mathcal{H},t\in\mathbb{T}$ with $L_{\eta_t^{\sigma}}^*:\eta_t^{\sigma'}\odot h \mapsto \langle \eta_t^{\sigma},\eta_t^{\sigma'}\rangle h$ for $\sigma',\sigma\in S$ and $h\in \mathcal{H}$.
For $\sigma',\sigma\in S,h\in \mathcal{H}$ and $b\in \mathcal{B}$ we have  $L_{\eta_t^{\sigma}}^*\rho(b)L_{\eta_t^{\sigma'}}=\mathfrak{K}_t^{\sigma,\sigma'}(b).$ It is easy to verify that $\rho_t(\mathcal{B})L_{\eta_t^{\sigma}}\mathcal{H}=(\mathcal{B}\eta_t^{\sigma})\odot \mathcal{H}$ is total in $K_t.$ Hence, for all $t\in \mathbb{T},$ $(K_t,\rho_t,L_{\eta_t^{\sigma}})$ is the minimal Kolmogorov Stinespring representation for $\mathfrak{K}_t.$ The following result is a generalization of \cite[Corollary; Page No. 3]{Sk12}.

\begin{corollary}
	Let $\mathfrak{K}=(\mathfrak{K}_t)_{t\ge 0}$ be a semigroup of CPD-kernels over a set $S$ on a unital $C^*$-algebra $\mathcal{B},$ where $\mathcal{B}\subseteq B(\mathcal{H}).$ Let $(K_t,\rho_t,L_{\eta_t^{\sigma}})$ be the minimal Kolmogorov Stinespring representation for $\mathfrak{K}_t.$ Suppose that $\mathcal{K}^{\sigma}_t$ is a linear map on a Hilbert $C^*$-module $E$ over $\mathcal{B}$ for every $t\in \mathbb{T}$ and $\sigma\in S$ such that $\mathcal
	{K}=\{\{\mathcal
	{K}^{\sigma}_t\}_{\sigma\in S}:t\in \mathbb{T}\}$ is a $\mathfrak{K}$-family. Put $G_t:=E\odot K_t,$ and let $\xi_t$ be a representation of $E$ into $B(K_t,G_t)$ (induced by Stinespring representation $\rho_t$) defined as $\xi_t(x)=L_x,$ where $L_x(h')=x\odot h'$ for $x\in E,h'\in K_t.$ Put $L:=E\odot \mathcal{H},$ and define the representation $\chi$ of $E$ into $B(\mathcal{H},L)$ induced by the canonical injection $\mathcal{B}$ to $B(\mathcal{H}).$ Then by Theorem \ref{1234}, there exists an isometry $V_t:=v_t\odot I_{\mathcal{H}}\in B(E\odot F_t\odot \mathcal{H},E\odot \mathcal{H})=B(E\odot K_t,L)=B(G_t,L)$ such that $V_t\xi_t(x)L_{\eta_t^{\sigma}}=\chi\circ \mathcal{K}_t^{\sigma}(x)$ for $\sigma\in S,x\in E, t\in \mathbb{T}.$ Moreover 
	\begin{itemize}
		\item[(1)]  $(K_t,\rho_t,L_{\eta_t^{\sigma}})$ is uniquely determined by the properties which fulfil a minimal Stinespring construction.
		\item[(2)] $(G_t,\xi_t)$ is uniquely determined by the properties a representation induced by $\rho_t$ fulfills.
		\item[(3)] An isometry $V_t$ is determined uniquely by $V_t\xi_t(x)L_{\eta_t^{\sigma}}=\chi\circ \mathcal{K}_t^{\sigma}(x)$ for every $t\in \mathbb{T}.$ Also, $$V_{s+t}\xi_{s+t}(x)L_{\eta_{s+t}^{\sigma}}=\chi\circ \mathcal{K}_{s+t}^{\sigma}(x) \quad \mbox{for every}\quad t,s\in \mathbb{T}.$$
	\end{itemize}
\end{corollary}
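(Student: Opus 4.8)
The plan is to manufacture $V_t$, at each fixed time, by amplifying the factorizing isometry of Theorem \ref{1234} with the identity on $\mathcal{H}$, and then to extract the three uniqueness assertions from the minimality/totality already present in the Kolmogorov Stinespring and induced-representation constructions.

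First I would fix $t \in \mathbb{T}$ and apply Theorem \ref{1234} to the $\mathfrak{K}_t$-family $\{\mathcal{K}_t^\sigma\}_{\sigma\in S}$, taking the correspondence to be the Kolmogorov module $F_t$ and $\mathfrak{i} = \eta_t$. This produces an isometry $v_t : E \odot F_t \to E$ with $\mathcal{K}_t^\sigma(x) = v_t(x \odot \eta_t^\sigma)$. Setting $V_t := v_t \odot I_\mathcal{H}$ and using the associativity identifications $(E \odot F_t)\odot\mathcal{H} = E\odot(F_t\odot\mathcal{H}) = E\odot K_t = G_t$ and $E\odot\mathcal{H} = L$, the right $\mathcal{B}$-linearity of $v_t$ guarantees that $V_t$ is a well-defined isometry in $B(G_t, L)$, because $\langle v_t\zeta \odot h, v_t\zeta'\odot h'\rangle = \langle h, \langle\zeta,\zeta'\rangle h'\rangle = \langle \zeta\odot h,\zeta'\odot h'\rangle$. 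The defining relation is then one line: for $x\in E$, $\sigma\in S$, $h\in\mathcal{H}$,
\[
V_t\,\xi_t(x)L_{\eta_t^\sigma}h = (v_t\odot I_\mathcal{H})(x\odot\eta_t^\sigma\odot h) = v_t(x\odot\eta_t^\sigma)\odot h = \mathcal{K}_t^\sigma(x)\odot h = \chi(\mathcal{K}_t^\sigma(x))h .
\]

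For the uniqueness statements I would argue as follows. Assertion (1) is the classical uniqueness-up-to-unitary of a minimal Stinespring triple: since $K_t = \overline{span}\{\rho_t(b)L_{\eta_t^\sigma}h\}$, any two triples fulfilling the minimal Kolmogorov Stinespring properties are intertwined by a unique unitary. Assertion (2) is the analogous statement for the representation $\xi_t$ induced by $\rho_t$, which is singled out by $G_t = \overline{span}\,\xi_t(E)K_t = E\odot K_t$. For assertion (3) the crux is totality: using $K_t = \overline{span}\{\rho_t(b)L_{\eta_t^\sigma}h\} = \overline{span}\{b\eta_t^\sigma\odot h\}$ and balancing over $\mathcal{B}$,
\[
G_t = \overline{span}\{x\odot b\eta_t^\sigma\odot h\} = \overline{span}\{(xb)\odot\eta_t^\sigma\odot h\} = \overline{span}\{\xi_t(x)L_{\eta_t^\sigma}h : x\in E,\ h\in\mathcal{H},\ \sigma\in S\},
\]
so the relation $V_t\xi_t(x)L_{\eta_t^\sigma} = \chi\circ\mathcal{K}_t^\sigma(x)$ prescribes $V_t$ on a total subset of $G_t$ and hence determines the bounded operator $V_t$ uniquely.

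The displayed semigroup identity is then just the defining relation of $V_{s+t}$, that is, the existence computation run at time $s+t$; invoking the semigroup law $\mathcal{K}_{s+t}^\sigma = \mathcal{K}_s^\sigma\circ\mathcal{K}_t^\sigma$ one gets $V_{s+t}\xi_{s+t}(x)L_{\eta_{s+t}^\sigma}h = \mathcal{K}_{s+t}^\sigma(x)\odot h = \chi(\mathcal{K}_{s+t}^\sigma(x))h$, and the compatibility of $V_{s+t}$ with $V_s$ and $V_t$ reflects the factorization $\nu_{s+t} = \nu_s(\nu_t\odot I_{F_s})(I_E\odot\alpha_{t,s})$ established in Theorem \ref{main result}. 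I expect the only delicate point to be bookkeeping across the three tensor-product identifications — checking that $v_t\odot I_\mathcal{H}$ is well-defined and isometric, and that the cyclicity conditions are exactly those forcing uniqueness in (1)--(3); there is no deep obstruction here, since the substantive semigroup compatibility was already proved.
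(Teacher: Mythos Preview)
Your proposal is correct and follows essentially the same line as the paper: construct $V_t=v_t\odot I_{\mathcal H}$ from the factorizing isometry of Theorem~\ref{1234} and verify $V_t\xi_t(x)L_{\eta_t^\sigma}=\chi\circ\mathcal K_t^\sigma(x)$ by the same one-line computation. The only point of divergence is the final displayed identity in (3): you observe (correctly) that it is simply the existence computation at the parameter value $s+t$, whereas the paper re-derives it by unpacking the factorization $\nu_{s+t}=\nu_s(\nu_t\odot I_{F_s})(I_E\odot\alpha_{t,s})$ from Theorem~\ref{main result}; your route is the more direct one, while the paper's computation makes the link to the inclusion-system structure explicit.
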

\begin{proof}
	Let $x\in E,\sigma\in S,h\in \mathcal{H},t\in \mathbb{T}$ and from Theorem \ref{1234} we have
	\begin{align*}
		V_t\xi_t(x)L_{\eta_t^{\sigma}}h&=V_t\xi_t(x)(\eta_t^{\sigma}\odot h)=(v_t\odot I_{\mathcal{H}})(x\odot \eta_t^{\sigma}\odot h)=\mathcal{K}_t^{\sigma}(x)\odot h=\chi(\mathcal{K}_t^{\sigma}(x))h=(\chi\circ \mathcal{K}_t^{\sigma}(x))h.
	\end{align*}
	For $t,s\in \mathbb{T},$ let $\{\mathcal{K}^{\sigma}_t\}_{\sigma\in S}$ be a $\mathfrak{K}_t$-family and $\{\mathcal{K}^{\sigma}_s\}_{\sigma\in S}$ be a $\mathfrak{K}_s$-family. Then by Theorem \ref{1234}, there exist isometries $
	\nu_t:{E}\bigodot F_t\to{E}$ and $\nu_s:{E}\bigodot F_s\to{E}$ with
	\begin{align*}
		\nu_t(x\odot \eta_t^{\sigma})=\mathcal{K}^{\sigma}_t(x)~~ and ~~ \nu_s(y\odot \eta_s^{\sigma})=\mathcal{K}^{\sigma}_s(y)~\mbox{for}~~\sigma\in S,y,x\in E,s,t\in \mathbb{T}.
	\end{align*}
	From the proof of Theorem \ref{main result}, there exists an isometry  $
	\nu_{s+t}:{E}\bigodot F_{s+t}\to{E}$ such that $\nu_{s+t}=\nu_s(\nu_t\odot I_{F_s})(I_E\odot \alpha_{t,s}).$ For $x\in E,\sigma\in S,h\in \mathcal{H}$ we have
	\begin{align*}
		V_{s+t}\xi_{s+t}(x)L_{\eta_{s+t}^{\sigma}}h&=V_{s+t}\xi_{s+t}(x)(\eta_{s+t}^{\sigma}\odot h)=(v_{s+t}\odot I_{\mathcal{H}})(x\odot \eta_{s+t}^{\sigma}\odot h)\\&=(\nu_s(\nu_t\odot I_{F_s})(I_E\odot \alpha_{t,s})\odot I_{\mathcal{H}})(x\odot \eta_{s+t}^{\sigma}\odot h)=\nu_s(\nu_t\odot I_{F_s})(x\odot \eta_{t}^{\sigma}\odot \eta_{s}^{\sigma} )\odot h \\&=\nu_s(\mathcal{K}_t^{\sigma}(x)\odot \eta_{s}^{\sigma})\odot h=\mathcal{K}_{s+t}^{\sigma}(x)\odot h=
		\chi(\mathcal{K}_{s+t}^{\sigma}(x))h=(\chi\circ \mathcal{K}_{s+t}^{\sigma}(x))h.
	\end{align*}
\end{proof}

Now we recall the following setup and definitions from \cite{DT17}: Suppose that $\mathcal{K}=\{\{\mathcal{K}^{\sigma}_t\}_{\sigma\in S}:t\in \mathbb{T}\}$ is a $\mathfrak{K}$-family, where $\mathfrak K=\{\mathfrak K_t\}_{t\in \mathbb{T}}$ is a CPD-semigroup on $\mathcal{B}$ over a set $S.$ Then by Theorem \ref{1234}, there exists an isometry $\nu_t:{E}\bigodot F_t\to{E}$  such that
\begin{align*}
	\nu_t(x\odot \eta_t^{\sigma})=\mathcal{K}^{\sigma}_t(x)~\mbox{for}~~\sigma\in S,x\in E,t\in \mathbb{T},
\end{align*} where $(F_t, \eta_t)$ is the (minimal)
Kolmogorov-representation for $\mathfrak{K}_t.$ Further, assume that $\mathcal{E}_t$ is complemented in $E,$ then there exists $*$-homomorphism $\vartheta_{t}:B^{a}(E)\to B^{a}(E)$ given by $\vartheta_{t}(b) = \nu_{t}(b \odot I_{F_{t}})\nu_{t}^*$ for $b\in B^{a}(E)$ and the following diagram
\begin{eqnarray}
	\xymatrix{ \mathcal{B} \ar@{->}[r]^{ \mathfrak{K}^{\sigma,\sigma'}_t}
		\ar@{<-}[d]_{\langle y,\bullet y'\rangle}
		&\mathcal{B}
		\ar@{<-}[d]^{\langle \mathcal{K}^{\sigma}_t (y),\bullet \mathcal{K}^{\sigma'}_t(y')\rangle}
		\\
		B^a (E) \ar@{->}[r]^{\vartheta_t}
		&  B^a (E)
	}
\end{eqnarray} commutes for every $y,y'\in E$ and $t\in \mathbb{T}.$

\begin{definition}
	Suppose that $\mathfrak K$ is a CPD-semigroup over $S$ on a $C^*$-algebra $\mathcal{B}$ and ${E}$ is a Hilbert $C^*$-module over $\mathcal{B}.$ A semigroup of $*$-homomorphisms $\{\vartheta_t\}_{t\in \mathbb{T}}$ on $B^a (E)$ is called {\rm CPDH-dilation} of $\mathfrak K$ if there exists CPDH-semigroup $\{\{\mathcal
	{K}^{\sigma}_t\}_{\sigma\in S}:t\in \mathbb{T}\}$ on $E$ such that the following diagram 
	
	\begin{eqnarray}\label{diag5}
		\xymatrix{ \mathcal{B} \ar@{->}[r]^{ \mathfrak{K}^{\sigma,\sigma'}_t}
			\ar@{<-}[d]_{\langle y,\bullet y'\rangle}
			&\mathcal{B}
			\ar@{<-}[d]^{\langle \mathcal{K}^{\sigma}_t (y),\bullet \mathcal{K}^{\sigma'}_t(y')\rangle}
			\\
			B^a (E) \ar@{->}[r]^{\vartheta_t}
			&  B^a (E)
		}
	\end{eqnarray}
	commutes for every $y,y'\in E$ and $E$ is full.
\end{definition}

We will use the concept of the product systems to construct a CPDH-dilation of a CPD-semigroup. Suppose that $E$ is a full Hilbert $\mathcal{B}$-module and $(E^{\odot}=(E_t), u=(u_{s,t}))$ is a product system (resp. inclusion system). A family of unitaries (resp. isometries) $U_t:E\bigodot E_t\to E$ is called {\it a left dilation} of $E^{\odot}=(E_t)_{t\in \mathbb{T}}$ 
to $E$ if $$U_{s+t}=U_t(U_s\odot I_{E_t})(I_E\odot u_{s,t})\quad \mbox{for every}\quad s,t\in \mathbb{T}.$$
Suppose that $\mathfrak{K}$ is a CPD-semigroup over $S$ on a unital $C^*$-algebra $\mathcal{B}.$ From Section $4$ (see \cite[Section 4]{BBLS04}), there exist a product system $(\mathcal{E}^{\odot}=(\mathcal{E}_t)_{t\in \mathbb{T}},B=(B_{s,t})_{s,t \in \mathbb{T}})$ and a unit $\xi^{\sigma\odot}\in\mathcal{E}^{\odot}$ such that $\langle
\xi^{\sigma}_t,b \xi^{\sigma'}_t\rangle=\mathfrak{K}^{\sigma,\sigma'}_t(b)$
for every $t\in\mathbb{T},\sigma',\sigma\in S,b\in \mathcal{B}.$ Suppose that $\mathcal{E}$ is the inductive limit over $\mathcal{E}_t$. Fix $t\in \mathbb{T},$ using the bilinear unitaries $B_{s,t}$ we obtain the family of unitary $U_t:\mathcal{E}\bigodot \mathcal{E}_t\to \mathcal{E}$ ( that is, $U_t:=B_{s,t}^*$ as $s\to\infty$  ). Then the unitaries $U_t$
form a left dilation of $\mathcal{E}^{\odot}$ to
$\mathcal{E}$ (see \cite{Sk07,DT17}), and hence  $\vartheta_t$ on $B^a
(\mathcal{E})$ given by $\vartheta_t (b)=U_t (b\odot I_{\mathcal{E}_t})U_t^*$ for every
$t\in \mathbb{T},b\in B^a(\mathcal{E}),$ is an $E_0$-semigroup. Put $\mathcal{K}^\sigma_t
(y):=U_t (y\odot\xi^\sigma_t) $ for $t\in \mathbb{T},y\in E,\sigma\in S$, then the
diagram \ref{diag5} commutes.

\begin{remark}
	Consider a full Hilbert $C^*$-module $E$ over a given $C^*$-algebra $\mathcal{B}$ and $\mathcal{K}^{\sigma}_s:E\to E$  is a map 
	for all $s\in \mathbb{T}$ and $\sigma\in S.$ Suppose that $\mathcal{K}=\{\{\mathcal{K}^{\sigma}_t\}_{\sigma\in S}:t\in \mathbb{T}\}$ is a $\mathfrak{K}$-family, where $\mathfrak{K}=(\mathfrak{K}_t)_{t\in \mathbb{T}}$ is a semigroup of CPD-kernels over a set $S$ on $\mathcal{B}.$ Then by Theorem \ref{main result}, the family of isometries $\{\nu_t\}_{t\in \mathbb{T}}$
	(defined in Theorem \ref{main result}) form a left dilation of the inclusion system $F^{\odot} = (F_t)_{t\in \mathbb{T}}$ to $E.$
\end{remark}
	
	\subsection*{Acknowledgement}
	Dimple Saini is supported by UGC fellowship (File No:16-6(DEC. 2018)/2019(NET/CS\\IR)). Harsh Trivedi is supported by MATRICS-SERB Research Grant, File No: MTR/2021/000286, by the Science and Engineering Research Board (SERB), Department of Science \& Technology (DST), Government of India. Dimple Saini and Harsh Trivedi acknowledge Center for Mathematical \& Financial Computing (C-MFC) and the DST-FIST program (Govt. of India) for providing the financial support for setting up the computing lab facility under the scheme “Fund for Improvement of Science and Technology” (FIST - No. SR/FST/MS-I/2018/24) at the LNM Institute of Information Technology.

\end{document}